%% file: main.tex
\documentclass{amsart}
\input{packages/packages-for-research-papers}
\usepackage{fancyhdr}
\title{A Tate-Type Theorem for Crystalline Classes in the 1-Motivic Category}
\author{Mohammadreza Mohajer}

\address{Department of Mathematics, Physics and Geology, Cape Breton University, 1250 Grand Lake Road, Sydney, NS B1P 6L2 Canada}

\email{Mohammadreza\_Mohajer@cbu.ca}
\email{mohammadreza.mohajer96@gmail.com}

\keywords{Deligne 1-motives, Barsotti--Tate groups, Dieudonn\'e modules, crystalline realization, filtered isocrystals, Tate-type theorem, finite fields, Voevodsky motives}

\subjclass[2020]{14F30, 14L05, 14K15, 14F35, 14F42, 14G15}

\begin{document}

\begin{abstract}
The Tate conjecture predicts that Galois-invariant classes in $\ell$-adic cohomology, and Frobenius-invariant classes in crystalline cohomology, arise from algebraic cycles. We establish an unconditional \(p\)-adic analogue of this principle for the \(1\)-motivic range. Our starting point is a full faithfulness theorem for Deligne \(1\)-motives: the Barsotti–Tate crystal functor identifies \(\Hom\)-groups of \(1\)-motives (after \(p\)-adic scalar extension) with \(\Hom\)-groups in the category of filtered Dieudonné modules. Using the equivalence between the derived category of \(1\)-motives up to isogeny and the \(1\)-motivic part of Voevodsky’s triangulated category of effective motives \(\DM^{\eff}(k;\Q)\), we extend this full faithfulness to the entire \(1\)-motivic thick subcategory. More precisely, over a finite field \(k=\F_q\) we show that every Frobenius-compatible morphism between Barsotti–Tate crystalline realizations is induced by a unique $1$-motivic morphism. Consequently, the Frobenius-invariant classes that occur in this range are already motivic and therefore algebraic. This yields a concrete linear-algebraic description of motivic morphisms and extensions in level \(\le 1\) in terms of Frobenius-equivariant maps preserving the Hodge filtration, providing a crystalline “Tate classes are algebraic” statement without recourse to codimension \(\ge 2\) cycle conjectures.

\end{abstract}

\maketitle

\section{Introduction}
Let $k=\F_q$ be a finite field of characteristic $p>0$, and $\W(k)$ its ring of Witt vectors equipped with the Frobenius automorphism $\sigma$. Let $K_0$ be the field of fractions of $\W(k)$. A Deligne $1$-motive over $k$ is a two-term complex
\[
M=[L\xrightarrow{u}G],
\]
where $L$ is a lattice and $G$ is a semi-abelian variety (\cite{Deligne74}). The category $\Mi(k)$ of $1$-motives is a natural geometric source of mixed objects of ``level $\le 1$'': it contains tori, abelian varieties, and lattices, and it is stable under extensions. They play a significant role in the study of mixed motives, Hodge theory, and arithmetic geometry. More generally, \(1\)-motives provide a motivic avatar of degree \(\le 1\) cohomology of curves and, after passing to the isogeny category, of the \(1\)-motivic part of Voevodsky's triangulated category of motives.

A central theme in arithmetic geometry is that realizations should reflect motivic morphisms.
For varieties over finite fields, the Tate conjecture predicts that Galois-invariant $\ell$-adic classes, and Frobenius-invariant crystalline classes,
are generated by algebraic cycle classes. In higher codimension this remains largely open, but in the $1$-motivic range one can hope for unconditional
statements because the relevant ``algebraic classes'' are governed by $1$-motive geometry rather than by codimension $\ge 2$ cycles.

\medskip
\noindent\textbf{Barsotti--Tate crystals and crystalline realization.}
The foundational constructions and comparison results for the crystalline realization 
of $1$-motives were developed by Andreatta and Barbieri-Viale 
\cite{AndreattaBarbieriViale2005, AndreattaBertapelle2011}.
To a Deligne $1$-motive $M$ one associates a $p$-divisible group $M[p^\infty]$ and hence two Dieudonn\'e crystals:
the \emph{crystalline realization} $\Tcrys(M)$ and the \emph{Barsotti--Tate crystal} (\emph{crystalline co-realization}) $\Tcrysve(M)$, obtained from the contravariant Dieudonn\'e functor and Cartier duality
(\cite{Messing,BerthelotMessing1979,AndreattaBarbieriViale2005}). These objects live in the abelian category $\Dk$ of filtered Dieudonn\'e modules over $W(k)$. The Frobenius and Verschiebung operators, together with the Hodge filtration induced by crystalline--de Rham comparison,
turn $\Tcrysve(M)$ into a linear-algebraic invariant that is well adapted to Frobenius actions.
\smallskip
\noindent\subsection*{Main results.}
The first goal of this paper is to show that Barsotti--Tate crystals capture all morphisms of $1$-motives after $p$-adic scalar extension.
More precisely, \cref{thm:fully-faithful} shows that for any $1$-motives $M$ and $M'$ over $k$, the Barsotti--Tate crystal realization induces a natural isomorphism
\[
\Hom_{\Mi(k)}(M,M')\otimes_{\Z} \Z_p\ \longrightarrow\ \Hom_{\Dk}\bigl(\Tcrysve(M'),\Tcrysve(M)\bigr).\tag{1.1}\label{eq:tag:1.1}
\]
This is a full faithfulness statement for the Barsotti--Tate crystal functor and can be viewed as a
``Tate-type'' result in level $\le 1$: Frobenius-compatible morphisms in the realization category are already motivic.

We also extend this full faithfulness from individual $1$-motives to the entire $1$-motivic part of motives. Let $\DM^{\eff}(k;\Q)$ be Voevodsky's triangulated category of effective motives with rational coefficients, and let
\[
\DM^{\eff}_{\le 1}(k;\Q)\subset \DM^{\eff}(k;\Q)
\]
be the thick triangulated subcategory generated by motives of smooth curves.
By a theorem of Barbieri-Viale and Kahn, $\DM^{\eff}_{\le 1}(k;\Q)$ is equivalent to the bounded derived category of $1$-motives up to isogeny (\cite[Thm.~2.1.2]{BarbieriVialeKahn2016}). Using this description and the semisimplicity phenomenon specific to finite fields,
we extend the linear Barsotti--Tate crystal realization to a triangulated functor on $\DM^{\eff}_{\le 1}(k;\Q)$ and prove that it is fully faithful there as well (\cref{thm:BT-thick-ff}). This implies a precise Tate-type statement for the entire $1$-motivic subcategory: for $X,Y\in \DM^{\eff}_{\le 1}(k;\Q)$, every Frobenius-compatible morphism between the corresponding Barsotti--Tate realizations comes from a unique motivic morphism $X\to Y$. Equivalently, in the $1$-motivic range, crystalline classes satisfying Frobenius and filtration constraints are algebraic.
\medskip

As a consequence, motivic morphisms and extension classes of level $\le 1$ admit an explicit description in terms of Frobenius-equivariant, filtration-preserving linear algebra. This perspective yields several applications, which we explore in \cref{sec:Application}. In particular, this provides a concrete method to compute $\Hom$-groups in $\DM^{\eff}_{\le 1}(k;\Q)$ via solving Frobenius-equivariant
systems of linear equations compatible with the Hodge filtration. More precisely, the $K_0$-linear operator $\varphi=F^f$ on $\Tcrysve(M)\otimes_{W(k)}K_0$ allows one to identify morphisms with the
centralizer of $\varphi$ subject to Hodge filtration constraints. This turns computations of $\Hom$ and $\End$ groups into explicit systems of linear equations,
illustrated in detail for examples such as $M=[\Z\to E]$ for elliptic curves $E/k$ in \cref{sec:Example M=ZtoE}.
\medskip

This result opens the door to reconstructing morphisms of $1$-motives directly from their 
$p$-adic linear algebra invariants and to establishing deep comparisons between the 
$p$-adic cohomology of algebraic varieties and their associated motivic objects, 
such as Picard and Albanese $1$-motives. 

Our results suggest a natural Honda--Tate-type perspective in the \(1\)-motivic range. For abelian varieties over finite fields, the Honda--Tate theory classifies simple isogeny classes in terms of Frobenius data, namely \(q\)-Weil numbers, and Tate proved that two abelian varieties over \(\F_q\) are \(\F_q\)-isogenous if and only if they have the same characteristic polynomial of Frobenius \cite{Honda1968,Waterhouse1969,Tate1966}. In this paper, we establish the morphism-theoretic analogue of such a picture for Deligne \(1\)-motives: \cref{thm:fully-faithful} and \cref{thm:BT-thick-ff} show that, after \(p\)-adic linearization, motivic morphisms are completely detected by Barsotti--Tate crystalline Frobenius-compatible linear algebra. Thus the full-faithfulness side of a Honda--Tate-type theory in level \(\le 1\) is already provided by our results. A natural further direction is to ask whether isogeny classes of \(1\)-motives over \(\F_q\) can likewise be described in terms of the associated mixed filtered \(\varphi\)-modules \((D,W_\bullet,\Fil^\bullet,\varphi)\), thereby extending the Honda--Tate philosophy from abelian varieties to the mixed \(1\)-motivic setting.

\subsection*{Why a finite field \texorpdfstring{$k$}{k}?}
Throughout this paper, the finiteness of \(k\) is used at several points. The choice of the base field matters in two independent ways:
(i) which symmetry group controls the realizations, and (ii) whether the Frobenius and Dieudonn\'e machinery is available. Our main results compare morphisms of $1$-motives with morphisms in a $p$-adic realization category built from
Barsotti--Tate groups and Dieudonn\'e theory.

\smallskip
\noindent\textbf{Galois invariants versus Frobenius invariants.}
Let $K$ be a field finitely generated over $\F_p$ and let $\ell\neq p$.
The Tate homomorphism theorem identifies geometric morphisms with \emph{Galois-equivariant} maps of $\ell$-adic Tate modules:
\[
\Hom_K(A,B)\otimes \Z_\ell \xrightarrow{\sim} \Hom_{\Gal(\bar K/K)}(\operatorname{T_\ell}(A),\operatorname{T_\ell}(B)),
\]
and similarly after tensoring with $\Q_\ell$; see \cite[Thm.~1.2]{Zarhin2014AbVarFiniteChar}.
Thus, over general finitely generated fields, the correct target is a Galois-invariant Hom group. Recall that in characteristic $0$ (e.g. over number fields), the analogous ``Tate homomorphism theorem''
is a consequence of Faltings' work; see \cite{Faltings1983Endlichkeitssaetze}.
In contrast, over a finite field $k=\F_q$ the absolute Galois group is procyclic and topologically generated by Frobenius.
This reduces Galois equivariance to Frobenius equivariance and makes it natural to formulate Tate-type statements
in terms of Frobenius-compatible $p$-adic realizations.

\smallskip
\noindent\textbf{Perfectness and the Dieudonn\'e formalism.}
Our Barsotti--Tate crystal constructions rely on the Dieudonn\'e theory of $p$-divisible groups over perfect fields
and on Witt vectors. If one assumes throughout that $k$ is perfect of characteristic $p$, then the condition
``$k$ is finitely generated over a characteristic-$p$ field'' becomes very restrictive. Indeed, if \(k\) is a finitely generated field of characteristic \(p\), then
$
[k:k^p]=p^{\operatorname{trdeg}_{\F_p}(k)}.$
Hence, if \(k\) is perfect, so that \(k=k^p\), one gets \(\operatorname{trdeg}_{\F_p}(k)=0\). Thus \(k\) is algebraic over \(\F_p\). Since \(k\) is finitely generated over \(\F_p\), it is a finite extension of \(\F_p\), hence a finite field.

\smallskip
\noindent\textbf{Why not work over algebraically closed fields?}
Even if one keeps perfectness, a fully faithful comparison of $\Hom$-groups with Dieudonn\'e realizations cannot hold
over an algebraically closed field.
Let $k$ be algebraically closed of characteristic $p$ and let $E/k$ be an ordinary elliptic curve.
Then
\[
E[p^\infty]\cong \mu_{p^\infty}\oplus \Q_p/\Z_p,
\]
so all ordinary elliptic curves have isomorphic $p$-divisible groups.
Choosing two non-isomorphic ordinary elliptic curves $E$ and $E'$ over $k$, we obtain an isomorphism
$E'[p^\infty]\simeq E[p^\infty]$, hence an isomorphism of their Dieudonn\'e modules and therefore an invertible element in
$\Hom_{\Dk}(\D(E'),\D(E))$.
On the other hand, $\Hom(E,E')$ contains an invertible element only if $E\simeq E'$.
Thus, in this setting the natural map \eqref{eq:tag:1.1} cannot be surjective. This reflects the general ``central leaf'' phenomenon: fixing a $p$-divisible group does not determine the isomorphism class of an abelian variety, and the locus of abelian varieties with a fixed $p$-divisible group may have positive dimension; see, e.g., \cite{Oort2009}.

\smallskip
\noindent
In summary, the finite-field hypothesis is the natural setting in which (a) Frobenius controls descent, (b) Dieudonn\'e theory over $\W(k)$ applies cleanly, and (c) one can expect unconditional full faithfulness statements in the $1$-motivic range.

\section{Preliminaries}\label{sec:Preliminaries}
Let $k$ be a finite field of characteristic $p>0$, and $\W(k)$ its ring of Witt vectors equipped with the Frobenius automorphism $\sigma$. Let $K_0$ be the field of fractions of $\W(k)$. A \emph{$1$-motive} over $k$ is given by a complex
\[
M = [L \xrightarrow{u} G],
\]
where:
\begin{itemize}
    \item $L$ is a lattice over \(k\), \ie it is an \'{e}tale locally constant group scheme \(L\) over \(k\) such that, after base change to a separable closure \(k^{\sep}\), one has
\[
L_{k^{\sep}}\cong \Z^r
\]
for some integer \(r\ge 0\). Equivalently, \(L\) corresponds to a finitely generated free abelian group endowed with a continuous action of \(\Gal(k^{\sep}/k)\).
    \item $G$ is a semi-abelian variety over $k$, i.e., an extension
    \begin{equation}\label{exact sequence for semi-abelian variety G}
    0 \to T \to G \to A \to 0,
    \end{equation}
    with $T$ a torus and $A$ an abelian variety.
    \item $u:L\to G$ is a morphism of $k$-group schemes.
\end{itemize}

Morphisms of 1-motives are the evident commutative square morphisms, \ie a morphism $f\in\Hom(M,M')$ for 1-motives $M=[L\xrightarrow{u}G]$ and $M'=[L'\xrightarrow{u'}G']$ is a pair of $k$-group homomorphisms $(f_{-1},f_0)$ such that the diagram
\begin{equation*}\label{morphism of 1-motives}
\begin{tikzcd}
L \arrow[d, "f_{-1}"] \arrow[r, "u"] & G \arrow[d, "f_0"] \\
L' \arrow[r, "u'"]                    & G'                
\end{tikzcd}
\end{equation*}
commutes. Denote the category of 1-motives over $k$ by $\Mi(k)$.

We view a $1$-motive $M=[L\xrightarrow{u}G]$ as a bounded complex of commutative
$\fppf$ sheaves (or group schemes) over $k$, concentrated in degrees $-1$ and $0$,
hence as an object of $\operatorname{C^b}_{\fppf}(k)$. In this exact category, Yoneda extension groups $\Ext^i_{\operatorname{C^b}_{\fppf}(k)}(M,M')$ are defined for all $i\ge 0$.
For two $1$-motives $M$ and $M'$, we write $\Hom(M,M')$ for the group of morphisms
of $1$-motives in $\Mi(k)$, and $\Ext^i(M,M')$ for the corresponding Yoneda
extension group in the exact category $\Mi(k)$.

If $X$ is either a lattice or a torus and $G$ is a semi-abelian variety, we write
$\Hom_k(X,G)$ for the group of $k$-group homomorphisms $X\to G$.
When $X$ is a torus, this agrees with morphisms of $1$-motives:
\[
\Hom_k(X,G)\;=\;\Hom_{\Mi(k)}([0\to X],[0\to G]).
\]
When $X$ is a lattice, $\Hom_k(X,G)$ is naturally identified with a Yoneda
extension group in $\Mi(k)$:
\[
\Hom_k(X,G)\;\xrightarrow{\ \sim\ }\;\Ext^1_{\Mi(k)}([X\to 0],[0\to G]),
\qquad
u\longmapsto \bigl[\,0\to[0\to G]\to [X\xrightarrow{u}G]\to [X\to 0]\to 0\,\bigr].
\]
Finally, when the meaning is clear from the context, we abbreviate the $1$-motive
$[L\to 0]$ by $L$ and the $1$-motive $[0\to G]$ by $G$.

\medskip
\noindent
There is a standard filtration associated to a 1-motive $M=[L\to G]$ called weight filtration. It is defined as follows: \begin{equation}\label{weight filtration on M}
    \W_i(M)=\begin{cases}
        0 & i<-2\\
        T & i=-2\\
        G & i=-1\\
        M & i\geq 0.
    \end{cases}
    \end{equation}
    The weight filtration is functorial on $M$. The graded pieces are defined as follows:
       \[
    \gr_i(M)=\begin{cases}
        0 & i\leq -3 \text{ or } i\geq 1,\\
        T & i=-2,\\
        A & i=-1,\\
        L & i= 0.
    \end{cases}
    \]
    By $T, G, A, $ and $L$, we mean the complexes $[0\to T], [0\to G], [0\to A],$ and $[L\to 0]$, respectively.

For any 1-motive $M$, there is a canonical exact sequence 
\begin{equation}\label{canonical exact seq for 1-motives}
0\longrightarrow [0\to G]\longrightarrow [L\to G]\longrightarrow [L\to 0]\longrightarrow 0.
\end{equation}
\medskip
\subsection{Crystalline Realization for 1-motives}
Let $n$ be a positive integer. Consider the multiplication by $n$ on $M$, $[n]\colon M\to M$, consisting of multiplication-by-$n$ maps on both $L$ and $G$. Its associated commutative diagram 
\begin{equation*}
\xymatrix{L \ar[r]^{u} \ar[d]^{[n]}& G\ar[d]^{[n]}\\
L\ar[r]^{u}& G
 }\end{equation*}
induces a morphism of $k$-group schemes $L\to L\times_G G, x\mapsto (nx,-u(x))$ and we define 
$$M[n]:=\coker(L\to L\times_G G).$$
More precisely, we have
\begin{equation}\label{formula M[n]}
M[n]=\frac{\big\{(x,g)\in L\times G\mid u(x)=ng\big\}}{\big\{(nx,u(x))\,\mid x\in L\big\}}
\end{equation}
as an fppf quotient.
In other words, $M[n]$ is $\operatorname{H^{-1}}(M/n)$ where $M/n$ is the cone of multiplication by $n$ on $M$. The exact sequence (\ref{canonical exact seq for 1-motives}) yields a short exact sequence
\begin{equation}\label{exact sequence M[n]}
0\to G[n]\to M[n]\to L[n]\to 0.
\end{equation}
Let $p$ be a fixed prime number, then $M[p^n]$ is a finite flat group scheme. The $p$-divisible group (or Barsotti--Tate group) of $M$ is $$M[p^{\infty}]:=\colim_{n\to\infty} M[p^n],$$
where the direct limit is taken over natural inclusion $M[p^n]\into M[p^m]$, for $m\geq n$.

Let us denote by $\Bg(k)$ the category of Barsotti--Tate groups over $k$. The exact sequence (\ref{exact sequence M[n]}) yields the exact sequence of Barsotti--Tate groups
\begin{equation}\label{exact sequence M[p^infty]}
    0\to G[p^{\infty}]\to M[p^{\infty}]\to L[p^{\infty}]\to 0,
\end{equation}
where $L[p^{\infty}]=\underline{L\otimes \Q_p/\Z_p}$, interpreted as an \'{e}tale \(p\)-divisible group determined by the \(\Gal(k^{\operatorname{sep}}/k)\)-module
$
L \otimes_{\Z} \Q_p/\Z_p$. For $M=[0\to A]$ an abelian variety we recover the Barsotti--Tate group of $A$.

According to \cite{AndreattaBarbieriViale2005}, we can associate to the $p$-divisible group $M[p^{\infty}]$ two filtered $F$-crystals 
\begin{align*}
\Tcrys(M):=\lim_{n}\D(M[p^{\infty}]^{\vee})(\Spec k\to \Spec \W_n(k)), \text{ and }\\ \Tcrysve(M):=\lim_{n}\D(M[p^{\infty}])(\Spec k\to \Spec \W_n(k))\end{align*}
obtained from the universal vector extensions, where $\D$ is the (contravariant) Dieudonn\'e crystal.
\begin{defn}  
The $\W(k)$-module $\Tcrys(M)$ is called the \emph{crystalline realization} of $1$-motive $M$. We call $\Tcrysve(M)$ the \emph{Barsotti--Tate crystal} of $M$ (or \emph{crystalline co-realization} of $M$). They both admit $\sigma$-semilinear operators Frobenius $F$ and Verschiebung $V$ satisfying $FV=VF=[p]$.

\end{defn}

The functor associating to a $1$-motive its $p$-divisible group is exact and covariant. The Dieudonn\'e functor and Cartier dual are exact and contravariant. Therefore, the functor $\Tcrys(\cdot)$ (and $\Tcrysve(\cdot)$) is exact and covariant (contravariant, resp.). Moreover, they respect the weight filtration on $M$. 
The weight filtration \eqref{weight filtration on M} induces a functorial weight filtration on the Barsotti--Tate crystal
$\Tcrysve(M)$; see \cite[\S1.1 and Def.~1.3.1]{AndreattaBarbieriViale2005}.

The crystalline--de Rham comparison isomorphism \cite[Theorem A']{AndreattaBarbieriViale2005} induces a filtration on $\Tcrys(M)$ from the Hodge filtration on the de Rham realization of $M$. Hence, the target category of both $\Tcrys(\cdot)$ and $\Tcrysve(\cdot)$ is the category of filtered Dieudonn\'e modules over $\W(k)$, denoted by $\Dk$. The morphisms are indeed $\W(k)$-linear maps that preserve filtration and satisfy $\sigma$-semilinear Frobenius and Verschiebung compatibility. The category $\Dk$ of filtered Dieudonn\'e modules over $\W(k)$ is an abelian category (\cite{FontaineLaffaille1982}).

\medskip
\subsection{Exploring the Hodge filtration on \texorpdfstring{$\Tcrysve(M)$}{Tcrysve(M)} explicitly}\label{sec:exploring the Hodge filtration}
Applying $\Tcrysve(\cdot)$ to the exact sequence \ref{canonical exact seq for 1-motives} yields the exact sequence 
\begin{equation}\label{canonical exact sequence for Tcrys(M)}
0\to \Tcrysve(L)\to\Tcrysve(M)\to \Tcrysve(G)\to 0    
\end{equation}
in $\Dk$. We have a natural filtered isomorphism $\Tcrys([\Z\to 0])\cong \iFD$, where $\iFD$ is the unit filtered Dieudonn\'e module $\iFD:=\W(k)$ with $F=\sigma$ and together with the filtration
\[
\Fil^i \iFD=\begin{cases}
    \W(k),& i\leq 0\\
    0, & i>0,
\end{cases}
\]
and $F^i$ on $\Fil^i\W(k)$ for $i\leq 0$ is $p^{-i}$ times the Frobenius $\sigma$. The unit filtered Dieudonn\'e module $\iFD$ is the unit object of $\Dk$. Set
\[
\sG_M:=M[p^\infty],\qquad D_M:=\Tcrysve(M)=\D(\sG_M)(\W(k))\in \Dk,
\]
The Hodge filtration on the Dieudonn\'e crystal $D(\sG_M)$ is defined by the canonical exact sequence
\[
0 \longrightarrow \omega_{\sG_M} \longrightarrow D(\sG_M)_S \longrightarrow \Lie(\sG_M^\vee) \longrightarrow 0,
\]
where $S=\Spec(k)$, $D(\sG_M)_S$ denotes the evaluation of the crystal $D(\sG_M)$ on the final object of the crystalline site $\CRIS(S/\W(k))$, $\omega_{\sG_M}$ is the $\W(k)$-module of invariant differentials, and $\sG_M^\vee$ is the Cartier dual (See \cite[Prop.~2]{BerthelotMessing1979}.)
If $D_M$ denotes the evaluation of $D(\sG_M)$ on the canonical PD-thickening
$\Spec k \hookrightarrow \Spec \W(k)$, then reduction mod $p$ identifies
$\overline D_M := D_M/pD_M$ with $D(\sG_M)_S$, hence the above exact sequence can be rewritten as
\[
0 \to \omega_{\sG_M} \to \overline D_M \to \Lie(\sG_M^\vee) \to 0.
\]

Then:
\[
\Fil^0 D_M := D_M,
\]
and
\[
\Fil^1 D_M := \{\,x\in D_M \mid x \bmod p \in \omega_{\sG_M}\subseteq \overline{D}_M\,\}
=\pi^{-1}(\omega_{\sG_M}),
\]
where $\pi:D_M\twoheadrightarrow \overline{D}_M$ is reduction modulo $p$.

\begin{remark}
Since $\sG_M$ fits into $0\to G[p^\infty]\to \sG_M\to L\otimes \Q_p/\Z_p\to 0$ and the \'etale group
$L\otimes \Q_p/\Z_p$ has $\omega=0$, one has $\omega_{\sG_M}\cong \omega_{G[p^\infty]}$.
In particular, for a lattice $L$, $\Fil^1\Tcrysve(L)=0$.
\end{remark}

\begin{remark}[The induced morphism $\Tcrysve(f)$ in $\Dk$]
Let $f:M\to M'$ be a morphism of $1$-motives. It induces a morphism of $p$-divisible groups
\[
f[p^\infty]:\sG_M=M[p^\infty]\longrightarrow \sG_{M'}=M'[p^\infty].
\]
By contravariance of $\D$, we obtain a $\W(k)$-linear map
\[
\Tcrysve(f):D_{M'}=\Tcrysve(M')\longrightarrow D_M=\Tcrysve(M),
\qquad \Tcrysve(f):=\D\!\bigl(f[p^\infty]\bigr)(\W(k)).
\]
This map is a morphism of filtered Dieudonn\'e modules, meaning:
\begin{enumerate}
\item (\textbf{Frobenius/Verschiebung compatibility})
\[
\Tcrysve(f)\circ F_{M'} = F_M\circ \Tcrysve(f),
\qquad
\Tcrysve(f)\circ V_{M'} = V_M\circ \Tcrysve(f).
\]
\item (\textbf{Filtration compatibility})
\[
\Tcrysve(f)\bigl(\Fil^i D_{M'}\bigr)\subseteq \Fil^i D_M
\qquad (i=0,1).
\]
Equivalently, reducing modulo $p$, $\overline{\Tcrysve(f)}:\overline{D}_{M'}\to \overline{D}_M$ carries
$\omega_{\sG_{M'}}$ into $\omega_{\sG_M}$, and the induced maps fit into the commutative diagram

\end{enumerate}
\begin{equation}\label{Filtration compatibility}
\begin{tikzcd}
0 \ar[r] 
  & \omega_{\sG_{M'}} \ar[r] 
    \ar[d,"(f{[p^\infty]})^{*}"] 
  & \overline{D}_{M'} \ar[r] 
    \ar[d, "\overline{\Tcrysve(f)}"] 
  & \Lie(\sG_{M'}^{\vee}) \ar[r] 
    \ar[d, "\Lie\!\bigl((f{[p^\infty]})^{\vee}\bigr)"] 
  & 0 \\
0 \ar[r] 
  & \omega_{\sG_M} \ar[r] 
  & \overline{D}_{M} \ar[r] 
  & \Lie(\sG_M^{\vee}) \ar[r] 
  & 0.
\end{tikzcd}
\end{equation}

In particular,
\[
\Tcrysve(f)\bigl(\Fil^1\Tcrysve(M')\bigr)\subseteq \Fil^1\Tcrysve(M),
\qquad
\Tcrysve(f)\bigl(\Fil^0\Tcrysve(M')\bigr)\subseteq \Fil^0\Tcrysve(M)=\Tcrysve(M).
\]
\end{remark}

\medskip
\subsection{Cartier duality} According to \cite[10.2]{Deligne74} and \cite[\S 1.5]{BS01}, Cartier duality is naturally extended to $1$-motives. Denote $M_A:=[L\xrightarrow{v}A]$, where $v$ is a map that makes the following diagram commute 
\begin{equation*}
\begin{tikzcd}
            &             & L \arrow[d, "u"] \arrow[rd, "v"] &             &   \\
0 \arrow[r] & T \arrow[r] & G \arrow[r]                      & A \arrow[r] & 0
.\end{tikzcd}
\end{equation*}
It admits the natural exact sequence
\begin{equation}\label{exact sequence for M_A}
    0\to A\to M_A\to L\to 0.
\end{equation}

The dual of $M$ is a $1$-motive $M\ve=[T\ve\xrightarrow{u\ve} G\ve]$, where $G\ve$ is an extension of $A\ve$ by $L\ve$ and defined as follows:
\begin{enumerate}
    \item The dual of the torus $T$ is a lattice over $k$, i.e., $T\ve$ is the group scheme which represents the sheaf $\Hom_k(T,\Gm)$ as $k$-group schemes.
    \item The dual of the lattice $L$ is a torus over $k$, i.e., $L\ve$ is the group scheme which represents $\Hom_{k}(L,\Gm)$ as $k$-group schemes.
    \item The dual of abelian scheme $A$ is $A\ve$, which is an abelian scheme over $k$ representing the sheaf $\Ext^1_k(A,\Gm)$, by the Weil-Barsotti formula $A\ve\cong \Ext^1_k(A,\Gm)$ (see \cite[Chapter III]{oort2006commutative}).
    \item We define $G\ve$ to be the group scheme over $k$ which represents the sheaf $\Ext^1(M_A,\Gm)$ as an fppf sheaf in $\operatorname{D^b_{fppf}}(k)$.
\end{enumerate}
We have the commutative diagram
\begin{equation}\label{diagram for dual 1-motive}
\begin{tikzcd}
            &                & T\ve \arrow[d, "u\ve"] \arrow[rd, "v\ve"] &                &   \\
0 \arrow[r] & L\ve \arrow[r] & G\ve \arrow[r]                            & A\ve \arrow[r] & 0
.\end{tikzcd}
\end{equation}
obtained by applying the functor $\Ext^{\bullet}(-,\Gm)$ to the diagram 
\begin{equation*}
        \begin{tikzcd}
            &             & 0                       &             &   \\
0 \arrow[r] & A \arrow[r] & M_A \arrow[u] \arrow[r] & L \arrow[r] & 0\\
            &             & M\arrow[u]                       &             &   \\
                        &             & T\arrow[u]                       &             &   \\
                                                &             & 0\arrow[u]                       &             &   
\end{tikzcd}
    \end{equation*}

The functor $\left(.\,\right)\ve:\Mi\to\Mi$ is an exact contravariant functor with the property that $(M\ve)\ve\cong M$ for any $1$-motive $M$. Cartier duality still exists and is defined in the same way for $1$-motives over any base scheme $S$.

\begin{prop}
    For any $1$-motive $M=[L\xrightarrow{u}G]$, we have 
    \[
    M\ve[p^{\infty}]=(M[p^{\infty}])\ve.
    \]
    Therefore, $\Tcrysve(M)=\Tcrys(M\ve)$.
\end{prop}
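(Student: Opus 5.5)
The plan is to prove the identity level by level, $M\ve[p^n]\cong (M[p^n])\ve$ for every $n$, where on the right stands the Cartier dual of the finite flat group scheme $M[p^n]$. The key input is Deligne's construction of a perfect pairing on torsion of $1$-motives, \cite[10.2.5]{Deligne74}. The Poincaré biextension $P$ of $(M,M\ve)$ by $\Gm$, i.e.\ the biextension defining Cartier duality of $1$-motives as in the previous subsection, induces a bilinear pairing
\[
e_n\colon M[n]\times M\ve[n]\longrightarrow \mu_n .
\]
It is obtained in the usual Weil-pairing way. Write $M/n=\operatorname{Cone}([n])$, so that $M[n]=\operatorname{H}^{-1}(M/n)$. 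The biextension gives a morphism $M\otimes^{\mathbb L}M\ve\to\Gm[1]$ in $\operatorname{D^b_{fppf}}(k)$. Tensoring with the cone of $n$ and taking $\operatorname{H}^{-1}$ then yields a map $M[n]\otimes M\ve[n]\to \Gm[n]=\mu_n$.

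First I will check that the pairings $e_n$ are compatible with the transition maps $M[p^n]\hookrightarrow M[p^{n+1}]$ and with multiplication by $p$. This gives a morphism of inductive systems
\[
M\ve[p^n]\to \underline{\Hom}(M[p^n],\mu_{p^n}),
\]
and hence a morphism $M\ve[p^\infty]\to (M[p^\infty])\ve$ of Barsotti--Tate groups.

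Next I will show that each map $M\ve[p^n]\to (M[p^n])\ve$ is an isomorphism, using the weight filtration and dévissage. The sequence \eqref{exact sequence M[n]}, together with the analogous sequence for the $A$-part and for $T$, gives a three-step filtration on $M[n]$ with graded pieces $T[n]$, $A[n]$ and $L/nL$. The dual filtration on $M\ve[n]$ has graded pieces $L\ve[n]$, $A\ve[n]$ and $T\ve/nT\ve$, appearing in reverse order. Deligne shows that $e_n$ is compatible with these filtrations, in the sense that $W_{-2}$ is orthogonal to $W_{-1}$. On graded pieces, $e_n$ restricts to three pairings:
\begin{itemize}
\item the tautological pairing $T[n]\times T\ve/n\to\mu_n$, which is perfect since $T\ve=\Hom(T,\Gm)$;
\item the Weil pairing $A[n]\times A\ve[n]\to\mu_n$, which is perfect by the Weil--Barsotti description of $A\ve$;
\item the pairing $L/n\times L\ve[n]\to\mu_n$, which is perfect for the same tautological reason as the first.
\end{itemize}
Since Cartier duality of finite flat group schemes is exact, the five lemma, applied twice along the filtration, shows that $M\ve[n]\to(M[n])\ve$ is an isomorphism. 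Passing to the colimit over $n=p^m$ gives $M\ve[p^\infty]\cong (M[p^\infty])\ve$.

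The main obstacle is the compatibility step. One must verify that $e_n$, restricted to the weight pieces, really agrees with the classical pairings up to sign, and that the orthogonality $W_{-2}\perp W_{-1}$ holds. I would handle this by reducing to the three split cases $[L\to 0]$, $[0\to T]$ and $[0\to A]$ via functoriality of the biextension under the maps $T\to M$ and $M\to M_A\to L$, and by citing \cite[10.2.5--10.2.8]{Deligne74} for the explicit identification.

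Finally, the claim about crystals follows from the definitions. By construction $\Tcrys(M\ve)=\lim_n\D\bigl((M\ve[p^\infty])^\vee\bigr)(\W_n(k))$. Since $(M\ve[p^\infty])^\vee\cong M[p^\infty]$ by the isomorphism just proved and biduality of $p$-divisible groups, this limit equals $\lim_n\D(M[p^\infty])(\W_n(k))=\Tcrysve(M)$. The Frobenius, Verschiebung and Hodge filtration agree on both sides because they are intrinsic to the $p$-divisible group.
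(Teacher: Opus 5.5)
Your proposal is correct and follows the same d\'evissage as the paper. You reduce along the weight filtration to the graded pieces: $T^\vee/p^n\simeq (T[p^n])^\vee$ and $L^\vee[p^n]\simeq (L/p^n)^\vee$ are tautological, $A^\vee[p^n]\simeq (A[p^n])^\vee$ is the Weil-pairing (Oort) duality, and biduality of $p$-divisible groups then gives $\Tcrysve(M)=\Tcrys(M^\vee)$. Your version is in fact more complete than the paper's: you build the global comparison map $M^\vee[p^n]\to (M[p^n])^\vee$ from Deligne's Poincar\'e biextension and check its compatibility with the weight filtrations and transition maps, which is exactly what the paper's ``it suffices to show the statement for graded pieces'' tacitly requires, since isomorphic graded pieces alone do not make two extensions isomorphic.
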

\begin{proof}
    It suffices to show the statement for graded pieces. The dual of the $p$-divisible group $\{\sG[p^n]\}_{n\in\mathbb{N}}$ is given by $(\sG[p^n])\ve=\Hom(M[p^{\infty}],\mu_{p^n})$. Therefore, $$(X\ve[p^{n}])\cong\Hom(X,\Gm[p^n])=(X[p^n])\ve$$ when $X$ is either a lattice or a torus. For an abelian variety $A$, by \cite[Theorem 19.1]{oort2006commutative} we know that $A\ve[p^n]=(A[p^n])\ve$.
\end{proof}

\medskip
\subsection{A semisimplicity input over finite fields}
The following lemma is the point at which the hypothesis $k=\F_q$ is used in an essential way, and it plays a key role throughout the paper.

\begin{lemma}[Semisimplicity of \(1\)-motives up to isogeny over \(\F_q\)]\label{lem:semisimple-M1-sec5}
For any two \(1\)-motives \(M,M'\) over \(k\), the group
\[
\Ext^1_{\Mi(k)}(M,M')
\]
is finite. Consequently, every extension class in \(\Mi(k)\) is torsion, and therefore every short exact sequence splits in the isogeny category \(\Mi(k)\otimes\Q\). In particular, \(\Mi(k)\otimes\Q\) is semisimple.
\end{lemma}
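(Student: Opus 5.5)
The plan is to argue by dévissage along the weight filtration \eqref{weight filtration on M}, reducing finiteness of \(\Ext^1_{\Mi(k)}(M,M')\) to finiteness for pairs of pure pieces \(L,T,A\). The basic tool is that \(\Ext^1_{\Mi(k)}(-,-)\) is half-exact in each variable on short exact sequences of \(1\)-motives. Given \(0\to M_1\to M\to M_2\to 0\), the exact segment
\[
\Ext^1(M_2,M')\to\Ext^1(M,M')\to\Ext^1(M_1,M')
\]
shows that \(\Ext^1(M,M')\) is finite whenever both outer terms are finite. The same holds in the second variable. Applying this to \eqref{canonical exact seq for 1-motives} and \eqref{exact semisimple-free}-type sequences is not needed; it suffices to use \eqref{canonical exact seq for 1-motives} and \eqref{exact sequence for semi-abelian variety G} in each variable, which reduces everything to the nine groups \(\Ext^1(X,Y)\) with \(X,Y\in\{L,T,A\}\).

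\textbf{Wrong-direction pairs.} First I treat the pairs where the weight of \(X\) is strictly lower than that of \(Y\): \(\Ext^1(T,L)\), \(\Ext^1(A,L)\) and \(\Ext^1(T,A)\). I claim these vanish. Take an extension \(0\to Y\to E\to X\to 0\) in \(\Mi(k)\). Since the weight filtration is functorial and strict, \(W_{w(X)}E\) maps isomorphically onto \(X\) and meets \(Y\) trivially. This gives a splitting. For instance, if \(E=[L'\to G']\) is an extension of \(T\) by \(L\), then \(L'=L\), \(G'=T\), and \(u\) is a homomorphism from a lattice to a torus, so \(E=L\oplus T\).

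\textbf{Same-weight pairs.} For \(\Ext^1(L,L')\) and \(\Ext^1(T,T')\), extensions of lattices (respectively tori) are again lattices (tori), classified by \(H^1(\Gal(k^{\sep}/k),\Hom(L,L'))\) (respectively by \(H^1\) of the Hom of character groups). The Galois action factors through a finite quotient \(\Z/n\), and the module \(N\) is free of finite type. The inflation maps therefore stabilise, and \(H^1(\hat\Z,N)\) is a finite group: it is torsion and finitely generated, being a quotient of \(N\). For \(\Ext^1(A,A')\) I would invoke the known finiteness of \(\Ext^1_k(A,A')\) over a finite field, for example via Milne's computation, or via Poincaré reducibility together with a finiteness argument. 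I expect this to be the main obstacle, and I would try to cite it rather than reprove it.

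\textbf{Mixed-weight pairs.} For the pairs with the weight of \(X\) above that of \(Y\) the groups are explicit. First, \(\Ext^1(L,G)\cong\Hom_k(L,G)\), as recalled in \S\ref{sec:Preliminaries}. After trivialising \(L\) over a finite extension \(k'\), this embeds in \(G(k')^{r}\), which is finite because \(k'\) is finite. This covers \(\Ext^1(L,T)\) and \(\Ext^1(L,A)\). Second, by Weil--Barsotti, \(\Ext^1(A,T)\) is identified with Galois-equivariant maps from the character lattice of \(T\) to \(A\ve(\bar k)\). This is again contained in the finite group \(A\ve(k')^{r}\).

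\textbf{Consequences.} With finiteness in hand, every class \(\xi\) is killed by some integer \(n\). Hence the pullback of the extension along \([n]\) splits, and this \([n]\) becomes invertible in \(\Mi(k)\otimes\Q\). To conclude, I check that \(\Ext^1_{\Mi(k)\otimes\Q}(M,M')=\Ext^1_{\Mi(k)}(M,M')\otimes\Q\). This follows because any extension in the isogeny category is isogenous to the image of an extension in \(\Mi(k)\), obtained by clearing denominators of the structure maps. Therefore \(\Ext^1\) vanishes in the isogeny category, every short exact sequence there splits, and \(\Mi(k)\otimes\Q\) is semisimple.
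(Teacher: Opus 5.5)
Apart from one step, your d\'evissage is the paper's own. It uses the same reduction to pure pieces, Galois $H^1$ for lattices and tori, Milne for $\Ext^1(A,A')$, $\Ext^1(L,G)=\Hom_k(L,G)\hookrightarrow G(k')^r$, and Weil--Barsotti plus descent for $\Ext^1(A,T)$. In two places you are more careful than the paper. You treat $\Ext^1(T,L)$ and $\Ext^1(A,L)$, which it omits; these do vanish, though in your example the reason is that $(\id,0)\colon[L\to 0]\to[L\xrightarrow{u}T]$ is a morphism only if $u=0$. You also compare $\Ext^1$ in $\Mi(k)\otimes\Q$ with $\Ext^1_{\Mi(k)}\otimes\Q$.

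The step that fails is $\Ext^1_{\Mi(k)}(T,A)=0$. Strictness of the weight filtration holds only up to isogeny. For $0\to A\to G\to T\to 0$ with $G$ semi-abelian, the maximal torus $W_{-2}G$ maps onto $T$ by an isogeny whose kernel $W_{-2}G\cap A$ can be a nontrivial finite group scheme. Concretely, let $\iota\colon\mu_n\to A$ be a nonzero $k$-homomorphism; for example take $n=2$, $q$ odd, and $A$ an elliptic curve with a rational $2$-torsion point. Pushing out $0\to\mu_n\to\Gm\xrightarrow{n}\Gm\to 0$ along $\iota$ gives $0\to A\to(A\times\Gm)/\mu_n\to\Gm\to 0$, whose middle term is semi-abelian. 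Its class $\delta(\iota)$ is nonzero, because $\Hom(\Gm,A)=0$ makes the connecting map $\delta\colon\Hom(\mu_n,A)\to\Ext^1(\Gm,A)$ injective. The paper's proof asserts the same vanishing and breaks at the same place: in its Chevalley argument, $E/E_{\mathrm{aff}}$ is not a quotient of $T$ (here it is $A/\iota(\mu_n)\neq 0$).

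What you need is finiteness, not vanishing. Here is one way to get it.

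First reduce to $T=\Gm$. As in the paper's treatment of $\Ext^1(A,T')$, Hochschild--Serre together with $\Hom(T,A)=0$ makes restriction to a splitting field $k'$, $|k'|=q'$, injective. So it suffices to treat $\Ext^1_{k'}(\Gm,A)$.

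Next show this group is torsion. For an extension $0\to A\to G\to\Gm\to 0$, the maximal torus of $G$ is a copy of $\Gm$ mapping onto $\Gm$ by $x\mapsto x^{\pm d}$, with kernel $\mu_d=W_{-2}G\cap A$. Hence $[G]=\delta_d(\iota)$ for some $\iota\colon\mu_d\to A$. So $\Ext^1_{k'}(\Gm,A)$ is torsion, and its $n$-torsion is $\delta_n(\Hom_{k'}(\mu_n,A))\cong\Hom_{k'}(\mu_n,A)$.

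Finally bound the torsion using Frobenius. Functoriality of the relative Frobenius gives $\pi_A\circ\iota=\iota\circ[q']=q'\iota$, so every such $\iota$ lands in $K:=\ker(\pi_A-q')$. This is finite because $\deg(\pi_A-q')=P_A(q')\neq 0$, where $P_A$ is the characteristic polynomial of $\pi_A$ and its roots have absolute value $\sqrt{q'}<q'$. If $N$ kills $K$, then $N$ kills every $\Ext^1_{k'}(\Gm,A)[n]$, hence the whole torsion group. Therefore $\Ext^1_{k'}(\Gm,A)\cong\Hom_{k'}(\mu_N,K)$, which is finite.

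With this in place of the vanishing claim, the rest of your argument goes through.
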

\begin{proof}
We argue by dévissage with respect to the weight filtration. Since every \(1\)-motive has weights \(0,-1,-2\), it is enough to prove finiteness of \(\Ext^1\) between the pure pieces
\[
[L\to 0],\qquad [0\to A],\qquad [0\to T],
\]
and for the mixed terms between them.

\smallskip
\noindent\emph{Weights \(0\) and \(-2\): lattices and tori:}
 Let \(T_1,T_2\) be tori over \(k\), and write \(X^*(T_i)\) for their character lattices, i.e. free \(\Z\)-modules of finite rank equipped with a continuous action of \(\Gal(k^{\operatorname{sep}}/k)\). Choose a finite extension \(k'/k\) splitting both tori, and let $
\Gamma:=\Gal(k'/k),$ which is finite cyclic since \(k\) is finite. Then the Galois action on \(X^*(T_i)\) factors through \(\Gamma\), and the anti-equivalence (see \cite[Exp.~X]{SGA3})
\[
T\longmapsto X^*(T)=T\ve
\]
between \(k\)-tori split by \(k'\) and finitely generated free \(\Z\)-modules with \(\Gamma\)-action identifies extensions of tori with extensions of \(\Gamma\)-lattices. More precisely,
\[
\Ext^1_k(T_1,T_2)\;\cong\;\Ext^1_{\Z[\Gamma]}\!\bigl(X^*(T_2),X^*(T_1)\bigr).
\]
Now set
$
M:=\Hom_{\Z}\!\bigl(X^*(T_2),X^*(T_1)\bigr),
$
viewed as a finitely generated \(\Z[\Gamma]\)-module via the diagonal \(\Gamma\)-action. Since \(X^*(T_2)\) is free as a \(\Z\)-module, we have
\[
\Ext^1_{\Z}\!\bigl(X^*(T_2),X^*(T_1)\bigr)=0.
\]
Hence the standard spectral sequence for \(\Ext\) over \(\Z[\Gamma]\) yields
\[
\Ext^1_{\Z[\Gamma]}\!\bigl(X^*(T_2),X^*(T_1)\bigr)
\;\cong\;
H^1(\Gamma,M).
\]
Since \(\Gamma\) is finite cyclic and \(M\) is a finitely generated abelian group, \(H^1(\Gamma,M)\) is finite. Indeed, if \(\Gamma=\langle \sigma\rangle\) has order \(n\), then
\[
H^1(\Gamma,M)\cong \ker(N)/(\sigma-1)M,
\qquad
N:=1+\sigma+\cdots+\sigma^{n-1},
\]
so \(H^1(\Gamma,M)\) is a finitely generated abelian group killed by \(n\), hence finite; see \cite[\S I.2]{SerreGC}. Therefore \(\Ext^1_k(T_1,T_2)\) is finite.
 
\smallskip
\noindent\emph{Lattices:} The same argument applies to lattices: after passing to a finite extension \(k'/k\) over which \(L_1\) and \(L_2\) become constant, one gets
\[
\Ext^1_k(L_1,L_2)\cong H^1\!\bigl(\Gamma,\Hom_{\Z}(L_1,L_2)\bigr),
\qquad \Gamma=\Gal(k'/k),
\]
which is finite since \(\Gamma\) is finite cyclic and \(\Hom_{\Z}(L_1,L_2)\) is finitely generated over \(\Z\).

\smallskip
\noindent\emph{Weight \(-1\): abelian varieties:} \(\Ext^1(A,A')\) is finite for abelian varieties over a finite field; see \cite{Milne1968}.

\smallskip
\noindent\emph{Mixed weights.} One has:
\begin{itemize}
\item \(\Ext^1(L,A')\cong \Hom_k(L,A')\) and \(\Ext^1(L,T')\cong \Hom_k(L,T')\), both groups are finite. Indeed, choose a finite extension $k'/k$ over which $L$  splits becomes constant. Then such morphisms are determined by $k'$-rational points of \(A'\) or \(T'\). Since $k'$ is finite and $A'$ and $T'$ are of finite type over $k'$, the $k'$-rational points $A'(k')$ and $T'(k')$ are finite (see \cite[Ch.~1]{MilneAlgebraicGroups2017}). In particular, the Hom groups are also finite over $k$ since the natural base-change maps are injective.

\item $\Hom_k(A,T')=0
\text{ and }
\Hom_k(T,A')=0,$
since \(A\) is proper and \(T'\) is affine. Moreover,
$\Ext^1_k(T,A')=0.$
Indeed, let
\[
0\to A'\to E\to T\to 0
\]
be an extension of group schemes. By Chevalley's theorem, \(E\) fits into an exact sequence
\[
0\to E_{\mathrm{aff}}\to E\to B\to 0,
\]
with \(E_{\mathrm{aff}}\) connected affine and \(B\) an abelian variety. Since \(E\to T\) is a quotient by the abelian subvariety \(A'\), the quotient \(E/E_{\mathrm{aff}}=B\) is also a quotient of \(T\). But every morphism from the affine group \(T\) to the proper group \(B\) is trivial, so \(B=0\). Hence \(E\) is affine. This is impossible unless \(A'=0\). Therefore
\begin{equation}\label{ext1(T,A')}
\Ext^1_k(T,A')=0.
\end{equation}

\end{itemize}

It remains to prove that \(\Ext^1_k(A,T')\) is finite. Choose a finite Galois extension \(k'/k\) splitting \(T'\), and write
$T'_{k'}\simeq \Gm^r,\quad
\Gamma:=\Gal(k'/k).$ By the Weil--Barsotti formula,
$\Ext^1_{k'}(A_{k'},\Gm)\cong A^\vee(k'),$
hence
\[
\Ext^1_{k'}(A_{k'},T'_{k'})
\cong
\Ext^1_{k'}(A_{k'},\Gm)^r
\cong
A^\vee(k')^{\,r}.
\]
Since \(k'\) is finite, the group \(A^\vee(k')\) is finite, so \(\Ext^1_{k'}(A_{k'},T'_{k'})\) is finite.

Now consider the restriction map
\[
\res:\Ext^1_k(A,T')\longrightarrow \Ext^1_{k'}(A_{k'},T'_{k'}).
\]
By the Hochschild--Serre low-degree exact sequence for \(\Ext\) (see \cite[Thm.~5.8.3]{WeibelHA} and \cite[\S I.2]{SerreGC}), one has
\[
0\to H^1\!\bigl(\Gamma,\Hom_{k'}(A_{k'},T'_{k'})\bigr)
\to \Ext^1_k(A,T')
\xrightarrow{\ \res\ }
\Ext^1_{k'}(A_{k'},T'_{k'})^\Gamma
\to H^2\!\bigl(\Gamma,\Hom_{k'}(A_{k'},T'_{k'})\bigr).
\]
Since \(\Hom_{k'}(A_{k'},T'_{k'})=0\), the outer terms vanish, and therefore
\[
\Ext^1_k(A,T')\xrightarrow{\sim}\Ext^1_{k'}(A_{k'},T'_{k'})^\Gamma.
\]
The right-hand side is a subgroup of the finite group \(\Ext^1_{k'}(A_{k'},T'_{k'})\), so it is finite. Hence \(\Ext^1_k(A,T')\) is finite.

Using the short exact sequences attached to the weight filtration and the corresponding long exact sequences in \(\Ext\), one deduces that \(\Ext^1_{\Mi(k)}(M,M')\) is finite for all \(1\)-motives \(M,M'\).

Finally, if \(\xi\in \Ext^1_{\Mi(k)}(M,M')\) has finite order \(n\), then \(n\xi=0\). Since \([n]\) is invertible in \(\Mi(k)\otimes\Q\), the extension class \(\xi\) becomes zero in the isogeny category. Hence every short exact sequence splits in \(\Mi(k)\otimes\Q\), so \(\Mi(k)\otimes\Q\) is semisimple.
\end{proof}

\medskip
\section{Fully faithfulness of the Barsotti--Tate crystal realization}\label{Fully faithfulness of Barsotti--Tate crystal realization}
Let $M=[L\xrightarrow{u}G]$ and $M'=[L'\xrightarrow{u'}G']$ be two $1$-motives over $k$. In this section, we prove that the natural map 
\[
\Hom_{\Mi(k)}(M,M')\otimes\Z_p\to\Hom_{\Dk}(\Tcrysve(M'),\Tcrysve(M))
\]
is an isomorphism. We begin with the following lemmas.

\begin{lemma}\label{lem:splitting-lattice}
Let $G$ be a semi-abelian variety over $k$, and let $L$ be a lattice over $k$. Then the natural map
\[
\Hom_k(L,G)=\Ext^1_{\Mi(k)}([L\to 0],[0\to G])\to \Ext^1(\Tcrysve(G),\Tcrysve(L))
\]
is injective.
\end{lemma}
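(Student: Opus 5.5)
The plan is to read the map in the statement concretely and then reduce everything to a computation with a single rational point. A homomorphism $u\in\Hom_k(L,G)$ goes to the class of the $1$-motive $M_u=[L\xrightarrow{u}G]$ through the canonical sequence \eqref{canonical exact seq for 1-motives}. Applying the exact contravariant functor $\Tcrysve(\cdot)$ gives the sequence \eqref{canonical exact sequence for Tcrys(M)}, and its Yoneda class in $\Ext^1_{\Dk}(\Tcrysve(G),\Tcrysve(L))$ is the image of $u$. The first step is to check that this assignment is a group homomorphism. For this I will use that Baer sums of extensions $[L\to 0]$ by $[0\to G]$ correspond to adding the maps $u$. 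I will also use that $M\mapsto M[p^\infty]$ and $\D$ are exact, so they carry Baer sums to Baer sums. Injectivity then amounts to: if \eqref{canonical exact sequence for Tcrys(M)} splits in $\Dk$, then $u=0$.

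\textbf{Step 1: from $\Dk$ back to $p$-divisible groups.} Suppose \eqref{canonical exact sequence for Tcrys(M)} splits in $\Dk$. Contravariant Dieudonn\'e theory over the perfect field $k$ is fully faithful on $p$-divisible groups, and the section respects $F$ and $V$. So the splitting lifts to a splitting of \eqref{exact sequence M[p^infty]},
\[
0\to G[p^\infty]\to M_u[p^\infty]\to L\otimes\Q_p/\Z_p\to 0 .
\]

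\textbf{Step 2: reduce to a constant lattice of rank one.} Choose a finite extension $k'/k$ over which $L$ is constant. Since $\Hom_k(L,G)\hookrightarrow\Hom_{k'}(L_{k'},G_{k'})$ and splittings persist under base change, it suffices to treat $L=\Z^r$. By additivity in $L$ we may then take $L=\Z$, so that $u$ is given by the single point $g=u(1)\in G(k')$.

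\textbf{Step 3: identify the extension class.} By formula \eqref{formula M[n]}, the extension $M_u[p^n]$ of $\Z/p^n$ by $G[p^n]$ is the pushout obtained from the $G[p^n]$-torsor $\{h\in G : p^nh=g\}$, i.e. from the Kummer class $\delta_n(g)\in H^1(k',G[p^n])$. A compatible system of splittings for all $n$ therefore says that $g\in\bigcap_n p^nG(k')$. Since $G(k')$ is finite (as in the proof of \cref{lem:semisimple-M1-sec5}), this intersection is exactly the prime-to-$p$ part of $G(k')$. Consequently the $p$-primary component of $g$ vanishes.

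\textbf{Main obstacle: the prime-to-$p$ component of $g$.} This is the step I expect to be hardest. The $p$-divisible group $M_u[p^\infty]$ does not see multiplication by integers prime to $p$, so nothing in Steps 1--3 forces this component to vanish. My first attempt would be to compare $M_u$ with $M_{mu}$, where $m$ is the prime-to-$p$ order of $g$. The point $mg$ has trivial prime-to-$p$ part, so by Step 3 the vanishing of the class of $M_{mu}$ forces $mu=0$. One would then like to use that $[m]$ induces an automorphism of $\Tcrysve(\cdot)$ to pass from $mu=0$ back to $u=0$.

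This last transfer is precisely where the argument is delicate. A prime-to-$p$ torsion point $g\neq 0$ gives a nonzero $u$ with $M_u[p^\infty]$ split. So the injectivity of $\Hom_k(L,G)$ itself can only be reached this way after the factor $\Z_p$ (or a $p$-adic completion) has been applied. If the comparison above does not close the gap, I would record that the argument proves injectivity on $\Hom_k(L,G)\otimes\Z_p$, and that this is the form used in \cref{thm:fully-faithful}.
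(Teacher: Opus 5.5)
Your Steps 1--3 follow the paper's own argument. The paper also passes from a splitting in $\Dk$ to a splitting of $M_u[p^\infty]$ by Dieudonn\'e theory, base changes to $k'$ where $L$ is constant, and shows that each $u(e_i)$ lies in $\bigcap_n p^nG(k')$. It then finishes by asserting that the finite group $G(k')$ has no nonzero infinitely $p$-divisible element. That assertion is false: as you say, $\bigcap_n p^nG(k')$ is exactly the prime-to-$p$ part of $G(k')$. So your ``main obstacle'' is precisely where the paper's proof breaks, and no further argument can close it, because \emph{the lemma as stated is false}.

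The target $\Ext^1_{\Dk}(\Tcrysve(G),\Tcrysve(L))$ is a $\Z_p$-module. Hence if $u$ has order $m$ prime to $p$, then $m\,\kappa(u)=\kappa(mu)=0$ forces $\kappa(u)=0$. Concretely, take $q>2$, $L=\Z$, $G=\Gm$ and $u(1)=a\in\F_q^\times\setminus\{1\}$. This $u$ is nonzero, but its realization splits. The paper's own Kummer example (\cref{sec:Example-Kummer}) asserts exactly this splitting in $\Dk$. For the same reason, your comparison with $M_{mu}$ cannot succeed. The invertibility of $[m]$ on the realization is exactly why prime-to-$p$ information is invisible there, and $mu=0$ says nothing about $u$.

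So drop the hedging and state the correct version outright. Since $\Hom_k(L,G)\hookrightarrow G(k')^r$ is finite, you have $\Hom_k(L,G)\otimes\Z_p=\Hom_k(L,G)[p^\infty]$, and the claim is that $\kappa\otimes\Z_p$ is injective. Your Steps 1--3 prove this. If $u$ has $p$-power order and its realization splits, then each $u(e_i)$ has $p$-power order and also lies in the prime-to-$p$ part of $G(k')$. Hence $u(e_i)=0$, so $u_{k'}=0$ and $u=0$.

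This is exactly the form invoked in the proof of \cref{thm:fully-faithful}, where the right vertical map is $\kappa\otimes\Z_p$. So that step of the theorem survives with the corrected lemma.

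A minor simplification for Step 3: you do not need the splittings to be compatible in $n$. A splitting at each finite level already gives $g\in p^nG(k')$ via the Kummer sequence for the fppf epimorphism $[p^n]:G\to G$.
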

\begin{proof}
Let $u\in\Hom_k(L,G)$. Consider the $1$-motive $M=[L\xrightarrow{u} G]$ over $k$. 
The lattice $L$ is \'etale locally constant group scheme and it becomes split over a finite extension $k'/k$, \ie $L_{k'}\cong \Z^r$. Let $u_{k'}:L_{k'}\to G(k')$ denote the base change of $u$ to $k'$. Assume that the corresponding exact sequence \eqref{canonical exact sequence for Tcrys(M)} in $\Dk$ splits. This means that the induced extension of $p$-divisible groups 
\[
0\to G[p^{\infty}]\to M[p^{\infty}]\to L[p^{\infty}]\to 0
\]
splits in the Barsotti--Tate category, because in classical Dieudonn\'e theory
over a perfect field, the Dieudonn\'e functor gives an anti-equivalence between $p$-divisible groups over $k$ and Dieudonn\'e modules over $\W(k)$; see \cite[Ch.~III]{Demazure1972}. So, for every $n\geq 1$, the finite-level exact sequence 
\[
0\to G[p^n]\to M[p^n]\xrightarrow{\pi_n}L/p^nL\to 0
\]
splits. Fix $n\geq 1$ and a section $s_n:L/p^nL\to M[p^n]$ of $\pi_n$ that is compatible with the transition maps. Base change to $k'$ and denote again by $s_n$ the
resulting splitting of
\[
0\to G[p^n]_{k'}\to M[p^n]_{k'}\xrightarrow{\pi_n} (L/p^nL)_{k'}\to 0.
\]
Since $L_{k'}\cong\Z^r$, fix a basis $\bigl\{e_1,\dots,e_r\bigr\}$ and let  $\bar e_i$ be its class in $L(\bar k)/p^nL(\bar k)\simeq (L/p^nL)(\bar k)$. Unwinding the definition of $M[p^n]$ in \eqref{formula M[n]}, we obtain $s_n(\bar e_i)=(x_n,g_n)$ such that $u_{k'}(x_n)=p^ng_n$ and $\bar e_i=\pi_n\circ s_n(\bar e_i)=\pi_n(x_n,g_n)=\bar x_n$. So $e_i-x_n\in p^nL(k')$ and we can write $e_i=x_n+p^nx'_n$ for some $x'_n\in L(k')$. Then 
\[
u_{k'}(e_i)=u_{k'}(x_n)+p^n\, u_{k'}(x'_n)=p^n(g_n+u_{k'}(x'_n)).
\]
This means that for any $i$, $u_{k'}(e_i)\in G(k')$ is $p^n$-divisible for all $n$. Since $k'$ is finite and $G$ is finite type over $k'$, the $k'$-rational points $G(k')$ is a finite abelian group, hence it has no nonzero infinitely $p$-divisible element. Thus $u=0$, \ie  $[u]=0$ in $\Ext^1\bigl([L\to 0],[0\to G]\bigr)$.

\end{proof}

\begin{lemma}\label{lemma3.2}
Let $G$ and $G'$ be two semi-abelian varieties over $k$. Then the natural map
\[
\Ext^1(G,G')\to \Ext^1_{\Bg(k)}(G[p^{\infty}],G'[p^{\infty}])
\]
is injective. In particular, the Barsotti--Tate crystal induces an injective map 
\[
\Ext^1(G,G')\to \Ext^1_{\Dk}(\Tcrysve(G'),\Tcrysve(G))
\]
\end{lemma}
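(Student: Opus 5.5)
The plan mirrors the proof of \cref{lem:splitting-lattice}: take an extension class whose image becomes split, and use the finiteness input of \cref{lem:semisimple-M1-sec5} to force the class itself to vanish. The ``in particular'' clause then follows formally. By contravariant Dieudonn\'e theory over the perfect field $k$ (\cite[Ch.~III]{Demazure1972}), $\Ext^1_{\Bg(k)}(G[p^\infty],G'[p^\infty])$ is identified with $\Ext^1$ of the Dieudonn\'e modules with the arrows reversed. One still has to check that the Hodge filtration is carried along, which holds because $\Tcrysve$ is exact into $\Dk$. So the whole content is the first injectivity.

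Let $\xi=[0\to G'\to E\to G\to 0]$ with $E$ semi-abelian, and assume $0\to G'[p^\infty]\to E[p^\infty]\to G[p^\infty]\to 0$ splits in $\Bg(k)$.

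\textbf{First step: the $p$-primary part.} By \cref{lem:semisimple-M1-sec5} the group $\Ext^1(G,G')$ is finite, so $\xi$ has finite order. For the $p$-primary component $\xi_p$, I would use the Kummer-type sequence $0\to G'[p^n]\to G'\xrightarrow{p^n}G'\to 0$. Its long exact sequence identifies $\Ext^1(G,G')[p^n]$ with a quotient of $\Ext^1(G,G'[p^n])$. The latter factors through $\Ext^1(G[p^n],G'[p^n])$, since $G'[p^n]$ is killed by $p^n$ and so any map from $G$ factors through $G/p^n$ in the derived sense. Choosing $n$ with $p^n\xi_p=0$ and using the compatible finite-level sections $s_n$ coming from the splitting, I would run the same ``infinitely $p$-divisible element'' argument as in \cref{lem:splitting-lattice}. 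The obstruction to lifting a section to $E$ lives in finite groups of rational points ($A^\vee(k')$, $T'(k')$, and $H^1$ of a cyclic group). A class that is $p^n$-divisible for all $n$ in a finite group is zero, so $\xi_p=0$.

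\textbf{Second step and main obstacle: the prime-to-$p$ part.} The obstacle is the prime-to-$p$ component $\xi'$ of order $m$ with $p\nmid m$. Multiplication by $m$ is an automorphism of every $p$-divisible group, so the $p$-adic realization carries no direct information about $\xi'$. My plan is to treat it weight by weight using the computations from the proof of \cref{lem:semisimple-M1-sec5}:
\begin{itemize}
\item $\Ext^1(T,A')=0$ by \eqref{ext1(T,A')}, so this piece contributes nothing.
\item $\Ext^1(T,T')\cong H^1(\Gamma,\Hom(X^*(T'),X^*(T)))$. Here I would check directly that the Galois module is detected by $T[p^\infty]$, since its character module is $X^*(T)\otimes\Z_p$ with the same $\Gamma$-action, and try to transport the splitting.
\item $\Ext^1(A,T')\cong (A^\vee(k')^r)^\Gamma$ and $\Ext^1(A,A')$ are the genuinely hard cases. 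For these I would first try to use that $\xi'$ is determined by a rational point whose image under the Frobenius-linear structure must match the split $p$-divisible extension.
\end{itemize}
I expect that this prime-to-$p$ step is where the argument either needs an additional input or must be checked most carefully. Finally, I would assemble the pieces by the five lemma applied along the weight filtration $0\to T\to G\to A\to 0$, and by exactness of $G\mapsto G[p^\infty]$.
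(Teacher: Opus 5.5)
There is a genuine gap, and it is exactly at the step you flagged: the prime-to-$p$ part cannot be handled, because the integral statement is false. Your own remark already shows this. Multiplication by $m$ on $\Ext^1_{\Bg(k)}(G[p^\infty],G'[p^\infty])$ is pushout along $m\colon G'[p^\infty]\to G'[p^\infty]$, which is an automorphism when $p\nmid m$. So this group is uniquely $m$-divisible, and the same holds for $\Ext^1_{\Dk}$, since $m$ is a unit in $\W(k)$. The realization map is additive, so every class $\xi'\in\Ext^1(G,G')$ of order prime to $p$ maps to $0$. Hence the kernel contains the whole prime-to-$p$ torsion of $\Ext^1(G,G')$.

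For a concrete failure, take $G=E$ an elliptic curve and $G'=\Gm$. Weil--Barsotti gives $\Ext^1(E,\Gm)\cong E^\vee(k)\cong E(k)$. For $y^2=x^3+x$ over $\F_3$ one has $\#E(\F_3)=4$, so every nonzero class is a non-split extension $0\to\Gm\to G_P\to E\to 0$ whose $3$-divisible group sequence splits. No weight-by-weight analysis can change this. For tori, $H^1(\Gamma,N\otimes\Z_p)=H^1(\Gamma,N)\otimes\Z_p$ only sees $p$-primary classes. For example, $0\to\Gm\to R_{k'/k}\Gm\to R_{k'/k}\Gm/\Gm\to 0$ with $[k':k]=\ell$ a prime $\neq p$ is non-split of order $\ell$, but it splits on character lattices after $\otimes\Z_p$, hence on $p$-divisible groups. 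For $\Ext^1(A,T')$ and $\Ext^1(A,A')$ the prime-to-$p$ torsion is likewise invisible.

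What is true, and all that is used later, is injectivity of $\Ext^1(G,G')\otimes\Z_p\to\Ext^1_{\Bg(k)}(G[p^\infty],G'[p^\infty])$, i.e.\ on the $p$-primary component. The semi-abelian d\'evissage invokes \cref{lemma3.2} only ``after tensoring with $\Z_p$''. Your first step aims at this statement, but as written it is vague: the ``factoring through $\Ext^1(G[p^n],G'[p^n])$'' and the obstruction ``in finite groups of rational points'' are not made precise.

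The paper's mechanism is cleaner. Pull $E$ back along $\iota_n\colon G[p^n]\hookrightarrow G$; the finite-level section gives $\iota_n^*[E]=0$. Exactness of $\Ext^1(G,G')\xrightarrow{p^n}\Ext^1(G,G')\xrightarrow{\iota_n^*}\Ext^1(G[p^n],G')$ then gives $[E]\in\bigcap_n p^n\Ext^1(G,G')$.

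Be aware that the paper then concludes $[E]=0$ from finiteness, which has the same defect as your second step. In a finite abelian group $H$ one has $\bigcap_n p^nH=H_{p'}$, the prime-to-$p$ part, not $0$. So the paper's argument proves precisely the $\otimes\Z_p$ statement, and your first step, done this way, is the correct replacement for the lemma.

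The same slip occurs in \cref{lem:splitting-lattice}, which you intended to mirror. The paper's own Kummer example $[\Z\xrightarrow{u}\Gm]$ with $u(1)=a\neq 1$ gives a nonzero element of $\Hom_k(\Z,\Gm)=k^\times$ whose crystalline extension splits.
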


\begin{proof}
Assume that $E$ is an extension of $G$ by $G'$ such that the corresponding extension of Barsotti--Tate groups splits. A splitting in $\Bg(k)$ is equivalent to compatible splittings on the truncations,
so for each $n\ge 1$ the induced sequence
\begin{equation}\label{eq:1 lemma 3.2}
0\to G'[p^n]\to E[p^n]\to G[p^n]\to 0
\end{equation}
splits. Since $G$ is semi-abelian, the multiplication-by-$p^n$ map $[p^n]:G\to G$ is an isogeny and hence an epimorphism of fppf sheaves
with kernel $G[p^n]$. This gives the short exact sequence
\[
0\to G[p^n]\xrightarrow{\iota_n} G\xrightarrow{[p^n]}G\to 0.
\]
Applying $\Hom_k(-,G')$ yields the exact segment
\begin{equation}\label{eq: 2 lemma 3.2}
\Ext^1_k(G,G')\xrightarrow{[p^n]^*}\Ext^1_k(G,G')\xrightarrow{\iota_n^*}\Ext^1_k(G[p^n],G'),
\end{equation}
Let \(\pi:E\to G\) denote the projection. Consider the pullback 
\[
\iota_n^*E \;:=\; E\times_G G[p^n]
\;\xrightarrow{\ \pi_n\ }\; G[p^n].
\]
This is an extension of \(G[p^n]\) by \(G'\). Since the sequence \eqref{eq:1 lemma 3.2} splits, there exists a section
\[
s_n: G[p^n]\longrightarrow E[p^n]
\]
such that the composite \(G[p^n]\xrightarrow{s_n}E[p^n]\to G[p^n]\) is the identity.
Composing with the inclusion \(E[p^n]\hookrightarrow E\) gives a morphism
\[
\tilde s_n:\; G[p^n]\longrightarrow E
\qquad\text{with}\qquad
\pi\circ \tilde s_n=\iota_n.
\]
By the universal property of the fiber product, \(\tilde s_n\) is equivalent to a section of
\(\pi_n:\iota_n^*E\to G[p^n]\), namely
\[
G[p^n]\longrightarrow E\times_G G[p^n],\qquad
x\longmapsto\bigl(\tilde s_n(x),x\bigr),
\]
whose composite with \(\pi_n\) is \(\id_{G[p^n]}\).
Hence the pulled-back extension \(\iota_n^*E\) splits, i.e. \(\iota_n^*[E]=0\) in
\(\Ext^1_k(G[p^n],G')\). Exactness of \eqref{eq: 2 lemma 3.2} then gives $[E]\in \mathrm{Im}([p^n]^*)$, i.e.
$[E]=p^n\xi_n$ for some $\xi_n\in \Ext^1_k(G,G')$. By \cref{lem:semisimple-M1-sec5}, we know that $\Ext^1_k(G,G')$ is a finite abelian group. Hence, it cannot have any nonzero infinitely $p$-divisible element.

\end{proof}

\begin{lemma}\label{lemma 3.3}
  For any two tori $T$ and $T'$, lattices $L$ and $L'$, and two abelian varieties $A$ and $A'$, we have
  \begin{enumerate}
    \item The natural map
    $\Hom(L,L')\otimes\Z_p\to \Hom_{\Dk}(\Tcrysve(L'),\Tcrysve(L))$
    is an isomorphism.
    \item The natural map
    $\Hom(T,T')\otimes\Z_p\to \Hom_{\Dk}(\Tcrysve(T'),\Tcrys(T))$
    is an isomorphism.

    \item The natural map
    $\Hom(A,A')\otimes\Z_p\to \Hom_{\Dk}(\Tcrysve(A'),\Tcrys(A))$
    is an isomorphism.
    \item $\Hom_{\Dk}(\Tcrysve(T),\Tcrysve(A))=0$.
  \end{enumerate}
\end{lemma}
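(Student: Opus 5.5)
The plan is to reduce every item to Dieudonné theory over the perfect field $k$. The contravariant Dieudonné functor gives an anti-equivalence between $p$-divisible groups over $k$ and Dieudonné modules over $\W(k)$ (\cite[Ch.~III]{Demazure1972}), and we use it in two steps. First, we check that for the pure objects $L$, $T$, $A$ the Hodge filtration imposes no condition beyond Frobenius/Verschiebung compatibility, because it is determined by the $p$-divisible group. Indeed, $\Fil^1=\pi^{-1}(\omega_{\sG})$, and any map induced by a morphism of $p$-divisible groups preserves $\omega$ by \eqref{Filtration compatibility}. Second, we show that $\Hom_{\Dk}(\Tcrysve(X'),\Tcrysve(X))=\Hom_{\Bg(k)}(X[p^\infty],X'[p^\infty])$ for $X,X'$ pure of the same type. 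With these two facts, each item becomes a statement about homomorphisms of $p$-divisible groups, which we then compare with geometric homomorphisms.

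For (1), $L[p^\infty]=\underline{L\otimes\Q_p/\Z_p}$ is étale. Homomorphisms of étale $p$-divisible groups over $k$ are $\Gal(k^{\sep}/k)$-equivariant maps $L\otimes\Z_p\to L'\otimes\Z_p$, that is, $\Hom_{\Z_p[\Gamma]}(L\otimes\Z_p,L'\otimes\Z_p)$ for a finite splitting group $\Gamma$. This equals $\Hom_{\Z[\Gamma]}(L,L')\otimes\Z_p$ by flatness of $\Z_p$ over $\Z$: taking $\Gamma$-invariants of the finitely generated module $\Hom_\Z(L,L')$ commutes with $\otimes\Z_p$. For (2), we dualize. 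By the Cartier duality proposition, $\Tcrysve(T)=\Tcrys(T\ve)$ with $T\ve$ a lattice, and $T[p^\infty]=X_*(T)\otimes\mu_{p^\infty}$ is multiplicative. Homomorphisms of multiplicative $p$-divisible groups correspond, via Cartier duality, to homomorphisms of the étale duals, so (2) reduces to (1) applied to the character lattices together with $\Hom(T,T')=\Hom_{\Z[\Gamma]}(X^*(T'),X^*(T))$. We read the target $\Tcrys(T)$ in the statement as $\Tcrysve(T)$, and the same for $A$ in (3).

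For (3), the key input is Tate's theorem over finite fields in its $p$-adic form (Tate for $\ell\ne p$; for $\ell=p$ this is due to Tate's argument extended by Waterhouse–Milne, see \cite{Tate1966,Waterhouse1969}): $\Hom(A,A')\otimes\Z_p\xrightarrow{\sim}\Hom_{\Bg(k)}(A[p^\infty],A'[p^\infty])$. Composing this with the Dieudonné anti-equivalence and the filtration remark gives (3). This is the main obstacle, since it is a genuinely deep arithmetic input rather than formal bookkeeping. If the $p$-adic version has to be justified, we would follow Tate's strategy: finiteness of isogeny classes over $\F_q$, semisimplicity of the Frobenius action on the Dieudonné isocrystal, and a dimension count using the characteristic polynomial of Frobenius. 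Injectivity is easy, because $\Hom(A,A')$ is torsion-free and a morphism killing $A[p^\infty]$ is divisible by every $p^n$.

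For (4), a morphism $\Tcrysve(T)\to\Tcrysve(A)$ corresponds to a homomorphism $A[p^\infty]\to T[p^\infty]$. We argue by slopes. The Frobenius on $\Tcrysve(T)\otimes K_0$ has all slopes equal to the multiplicative one, $1$, while the slopes of $\Tcrysve(A)$ lie in $[0,1]$ with the slope-$1$ part coming from the multiplicative part of $A[p^\infty]$; the Dieudonné–Manin decomposition alone therefore does not suffice. We instead use the weights of Frobenius $\varphi=F^f$. On $\Tcrysve(T)$ the eigenvalues of $\varphi$ are $q$ times roots of unity, whereas on $\Tcrysve(A)$ they are $q$-Weil numbers of absolute value $q^{1/2}$ by the Riemann hypothesis for $A$. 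A $K_0$-linear map commuting with $\varphi$ between modules with disjoint eigenvalue sets is zero. Since the modules are torsion-free, the map is already zero before inverting $p$.
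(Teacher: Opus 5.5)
Your proposal is correct. For (1)--(3) it follows the paper's route:
\begin{itemize}
\item The paper also reduces everything to the contravariant Dieudonn\'e anti-equivalence.
\item It proves (1) by passing to a splitting field $k'$ and descending along $\Gal(k'/k)$. Your remark that $\Gamma$-invariants commute with $-\otimes\Z_p$ by flatness is that descent step made explicit.
\item It deduces (2) from (1) by Cartier duality.
\item It obtains (3) from Tate's theorem $\Hom(A,A')\otimes\Z_p\cong\Hom(A[p^\infty],A'[p^\infty])$.
\end{itemize}
Your explicit check that the Hodge filtration imposes no extra condition is used silently in the paper. So is your reading of $\Tcrys$ as $\Tcrysve$ in (2) and (3). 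Read literally with $\Tcrys(T)=\Tcrysve(T\ve)$, statement (2) would be false, since there are no nonzero maps from an \'etale to a multiplicative $p$-divisible group over $k$.

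The genuine difference is in (4). The paper says that $\Tcrysve(T)$ is pure of weight $-2$ and $\Tcrysve(A)$ is pure of weight $-1$, and that morphisms in $\Dk$ preserve weight filtrations. But a weight filtration is not part of the data of an object of $\Dk$, so this preservation is exactly what needs proof, and it is not formal. Over $\overline{\F}_p$ an ordinary elliptic curve has $E[p^\infty]\cong\mu_{p^\infty}\oplus\Q_p/\Z_p$, so $\Hom(E[p^\infty],\mu_{p^\infty})\neq 0$. As you observe, Newton slopes cannot separate the two sides either.

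Your argument supplies the missing input and shows exactly where finiteness of $k$ is used. The linear Frobenius $\varphi=F^f$ has eigenvalues $q\zeta$ ($\zeta$ roots of unity) on $\Tcrysve(T)\otimes K_0$, and $q$-Weil numbers of absolute value $q^{1/2}$ on $\Tcrysve(A)\otimes K_0$. The two characteristic polynomials are therefore coprime, which kills any $\varphi$-equivariant map. The price is extra arithmetic input: the Riemann hypothesis for $A$, plus the identification $F^f=\D(\pi_A)$ and the equality of its characteristic polynomial with the Weil polynomial of $A$.

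You can avoid the crystalline comparison by arguing directly on $p$-divisible groups. If $T$ splits over $\F_{q^m}$, then $\pi_T^m=[q^m]$. Any $h:A[p^\infty]\to T[p^\infty]$ commutes with the $q$-Frobenius, so $h\circ(\pi_A^m-q^m)=0$. Since $q^m$ is not an eigenvalue of $\pi_A^m$, the map $\pi_A^m-q^m$ is an isogeny. Hence $h$ is killed by an integer, and $h=0$ by torsion-freeness.
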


\begin{proof}
\begin{enumerate}
\item Choose a finite extension \(k'/k\) over which both lattices \(L\) and \(L'\) become constant. Hence the associated \(p\)-divisible groups are constant étale. For a constant étale \(p\)-divisible group \(H\) over the perfect field \(k'\), the contravariant Dieudonn\'e module is canonically
\[
\D(H)(W(k'))\cong \Hom_{\Z_p}(T_pH,\,W(k')).
\]
We obtain canonical identifications
\[
\Tcrysve(L_{k'})\cong \Hom_{\Z_p}(L_{k'}\otimes \Z_p,\,W(k')),
\qquad
\Tcrysve(L'_{k'})\cong \Hom_{\Z_p}(L'_{k'}\otimes \Z_p,\,W(k')).
\]
These Dieudonn\'e modules are naturally identified with the sum of finitely many copies of unit filtered Diedudonn\'e $\iFD$. Therefore morphisms of Barsotti--Tate crystals are exactly \(\Z_p\)-linear maps between the corresponding Tate modules, since Frobenius compatibility forces the map to be defined over the \(\sigma\)-fixed scalars. Since
\[
\Hom(L,L')\otimes \Z_p \cong \Hom_{\Z_p}(L\otimes \Z_p,\;L'\otimes \Z_p),
\]
this yields the desired isomorphism over \(k'\). Finally, the construction is canonical and compatible with the Galois descent data for \(\Gal(k'/k)\). Thus, by Galois descent, one obtains the desired isomorphism over \(k\).

\item By Cartier duality , \(k\)-tori are antiequivalent to \(k\)-lattices via \(T\mapsto X^\ast(T)=T\ve\), and one has a natural identification
\[
  \Hom_k(T,T')\;\cong\;\Hom_k\bigl(X^\ast(T'),X^\ast(T)\bigr).
\]
Moreover, this antiequivalence is compatible with passage to \(p\)-divisible groups and with the contravariant Dieudonn\'e functor. Thus a morphism \(f:T\to T'\) corresponds to a morphism
\[
  f^\ast:X^\ast(T')\to X^\ast(T),
\]
while the induced map on Dieudonn\'e modules
\[
  \D\bigl(T'[p^\infty]\bigr)\to \D\bigl(T[p^\infty]\bigr)
\]
identifies with
\[
  \D\bigl(X^\ast(T)[p^\infty]\bigr)\to \D\bigl(X^\ast(T')[p^\infty]\bigr).
\]
Therefore the desired statement for \(T\) and \(T'\) is equivalent to the corresponding statement for the lattices \(X^\ast(T')\) and \(X^\ast(T)\), and hence follows from~(1).

  \item
  By Tate's theorem \cite{Tate1966}, for abelian varieties $A,A'$ over a finite field $k$, the natural map 
  \[
    \Hom(A,A')\otimes\Z_p \;\xrightarrow{\ \sim\ }\; \Hom\bigl(A[p^\infty],A'[p^\infty]\bigr)
  \]
  is an isomorphism. Using the (contravariant) Dieudonn\'e antiequivalence over a perfect field, we have
  \[
    \Hom\bigl(A[p^\infty],A'[p^\infty]\bigr)
    \;\cong\;
    \Hom_{\Dk}\bigl(\Tcrysve(A'),\Tcrysve(A)\bigr),
  \]
  and the claim follows.

  \item
   The realization \(\Tcrysve\) is compatible with the weight filtration. The torus \(T\) is pure of weight \(-2\), while the abelian variety \(A\) is pure of weight \(-1\). Since morphisms in \(\Dk\) preserve the weight filtration, any morphism
    \[
\Tcrysve(T)\longrightarrow \Tcrysve(A)
    \]
    must send the whole source, which has weight \(-2\), into the weight \(-2\) part of \(\Tcrysve(A)\), and this is zero. Hence
    \[
    \Hom_{\Dk}\bigl(\Tcrysve(T),\Tcrysve(A)\bigr)=0.
    \]
\end{enumerate}
\end{proof}
\medskip
\subsection{Semi-abelian d\'evissage.}
Let $0\to T\to G\to A\to 0$ and $0\to T'\to G'\to A'\to 0$ be the torus--abelian decompositions.
Write $[G]\in \Ext^1(A,T)$ and $[G']\in \Ext^1(A',T')$ for the corresponding extension classes.
In any abelian category, morphisms between extensions are described by the exact sequence
\begin{equation}\label{eq:exact-G}
0\to \Hom(G,G')\to \Hom(T,T')\oplus \Hom(A,A') \xrightarrow{\ \delta\ } \Ext^1(A,T'),
\end{equation}
where $\delta(\tau,\alpha)=\tau_\ast[G]-\alpha^\ast[G']$, $\tau_*$ denotes pushout along $\tau$, and $\alpha^*$ denotes pullback along $\alpha$. In other words, maps between semi-abelian extensions are exactly maps between the constituents (torus and abelian parts) that make the extension classes match.
Applying $\Tcrysve$ gives the corresponding exact sequence in $\Dk$:
\begin{equation}\label{eq:exact-TG}
\begin{aligned}
0\to \Hom_{\Dk}\bigl(\Tcrysve(G'),\Tcrysve(G)\bigr)\to
\Hom_{\Dk}\bigl(\Tcrysve(T'),\Tcrysve(T)\bigr)\oplus
\Hom_{\Dk}\bigl(\Tcrysve(A'),\Tcrysve(A)\bigr)\\
\xrightarrow{\ \delta_{\cris}\ }
\Ext^1_{\Dk}\bigl(\Tcrysve(T'),\Tcrysve(A)\bigr).
\end{aligned}
\end{equation}
Now form the commutative diagram obtained from \eqref{eq:exact-G} and \eqref{eq:exact-TG} via $\Tcrysve$. By \cref{lemma 3.3}(2) and (3), the two middle terms match after tensoring with $\Z_p$.
By \cref{lemma3.2} applied to $(G,G')=(A,T')$, the rightmost map on $\Ext^1$ is injective after tensoring with $\Z_p$.
A diagram chase then yields an isomorphism
\begin{equation}\label{Semi-abelian devissage.}
\Hom(G,G')\otimes \Z_p\;\xrightarrow{\ \sim\ }\;\Hom_{\Dk}\bigl(\Tcrysve(G'),\Tcrysve(G)\bigr).    
\end{equation}

\medskip

\begin{thm}\label{thm:fully-faithful}
Let $M=[L\xrightarrow{u}G]$ and $M'=[L'\xrightarrow{u'}G']$ be $1$-motives over $k$.
Then the natural map
\[
\Hom_{\Mi(k)}(M,M')\otimes\Z_p\to\Hom_{\Dk}(\Tcrysve(M'),\Tcrysve(M))
\]
is an isomorphism.
\end{thm}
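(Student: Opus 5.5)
The plan is to repeat the semi-abelian dévissage, now using the canonical sequence \eqref{canonical exact seq for 1-motives} $0\to G\to M\to L\to 0$ and its image \eqref{canonical exact sequence for Tcrys(M)} under $\Tcrysve$. In $\Mi(k)$, a morphism $M\to M'$ is a pair $(f_{-1},f_0)$ with $u'f_{-1}=f_0u$. This gives the exact sequence
\[
0\to \Hom(M,M')\to \Hom(L,L')\oplus\Hom(G,G')\xrightarrow{\ \delta\ }\Hom_k(L,G'),\qquad \delta(f_{-1},f_0)=u'f_{-1}-f_0u .
\]
Here $\Hom_k(L,G')=\Ext^1([L\to0],[0\to G'])$, and $\delta$ is the difference of pushout and pullback of extension classes. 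The same description for morphisms of extensions in the abelian category $\Dk$ gives
\[
0\to \Hom_{\Dk}(D_{M'},D_M)\to \Hom_{\Dk}(\Tcrysve(L'),\Tcrysve(L))\oplus\Hom_{\Dk}(\Tcrysve(G'),\Tcrysve(G))\xrightarrow{\delta_{\cris}}\Ext^1_{\Dk}(\Tcrysve(G'),\Tcrysve(L)).
\]
There is one additional check here. A morphism $D_{M'}\to D_M$ must send the sub $\Tcrysve(L')$ into $\Tcrysve(L)$ and induce a map on the $G$-quotients. This holds because morphisms in $\Dk$ respect the weight filtration, and $\Tcrysve(L)$ is the weight-$0$ piece. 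The same fact already gave \cref{lemma 3.3}(4. So the left-exactness of the second row is legitimate.

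Since $\Z_p$ is flat over $\Z$, tensoring the first row with $\Z_p$ keeps it exact. $\Tcrysve$ then maps the first row to the second, and I will check that the squares commute; this is functoriality of pushout and pullback. On the middle terms the comparison map is an isomorphism by \cref{lemma 3.3}(1) and \eqref{Semi-abelian devissage.}.

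On the right-hand terms I need injectivity of $\Hom_k(L,G')\otimes\Z_p\to\Ext^1_{\Dk}(\Tcrysve(G'),\Tcrysve(L))$. \cref{lem:splitting-lattice} gives injectivity before tensoring. Moreover, $\Hom_k(L,G')$ is finite, by the mixed-weight part of the proof of \cref{lem:semisimple-M1-sec5}. So $\Hom_k(L,G')\otimes\Z_p$ is its $p$-primary part, a subgroup of $\Hom_k(L,G')$, and injectivity persists.

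A four-lemma diagram chase then finishes the proof. Injectivity of the comparison map on $\Hom(M,M')\otimes\Z_p$ is immediate from the injective inclusions into the middle terms. For surjectivity, take $\phi\in\Hom_{\Dk}(D_{M'},D_M)$. Its components lift uniquely to some $(f_{-1},f_0)\in(\Hom(L,L')\oplus\Hom(G,G'))\otimes\Z_p$. The image of $\delta(f_{-1},f_0)$ equals $\delta_{\cris}$ of the components of $\phi$, which is $0$, so injectivity on the right forces $\delta(f_{-1},f_0)=0$. Exactness of the tensored first row then shows that $(f_{-1},f_0)$ comes from $\Hom(M,M')\otimes\Z_p$.

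The main obstacle is the identification of $\delta$ with $\delta_{\cris}$ under $\Tcrysve$. One must verify that the class of $M$ in $\Hom_k(L,G)$ is sent to the class of \eqref{canonical exact sequence for Tcrys(M)}, compatibly with pushout along $f_0$ and pullback along $f_{-1}$. I would prove this using exactness of $M\mapsto M[p^\infty]$ and of $\D$. Specifically, $f_0u$ and $u'f_{-1}$ correspond to the $1$-motives $[L\to G']$ obtained by pushout and pullback, and their $p$-divisible groups are the corresponding pushout and pullback extensions. The contravariance of $\D$ swaps pushout and pullback, and this swap has to be tracked carefully.
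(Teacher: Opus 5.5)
Your argument is the paper's proof. It uses the same two left-exact rows built from $0\to G\to M\to L\to 0$ and its image under $\Tcrysve$, the middle isomorphisms from \cref{lemma 3.3}(1) and \eqref{Semi-abelian devissage.}, injectivity on the right from \cref{lem:splitting-lattice}, and the same four-lemma chase. Your use of the finiteness of $\Hom_k(L,G')$, so that $\Hom_k(L,G')\otimes\Z_p$ is its $p$-primary subgroup, fills a point the paper passes over. Injectivity of $\kappa$ would not by itself give injectivity of $\kappa\otimes\Z_p$, because the target is already a $\Z_p$-module.

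However, the step ``\cref{lem:splitting-lattice} gives injectivity before tensoring'' is false and should not be used. Take the Kummer motive of \S\ref{sec:Example-Kummer} with $u(1)=a\neq 1$. The class $u\in\Hom_k(\Z,\Gm)=k^\times$ is nonzero and has order prime to $p$. Its image in the $\Z_p$-module $\Ext^1_{\Dk}(\Tcrysve([0\to\Gm]),\Tcrysve([\Z\to 0]))$ is therefore killed by a $p$-adic unit, so it vanishes; the paper itself notes that this sequence splits in $\Dk$. The error in the lemma's proof is the claim that a finite abelian group has no nonzero infinitely $p$-divisible elements. In fact, for finite $H$, $\bigcap_n p^nH$ is exactly the prime-to-$p$ part of $H$.

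What the lemma's argument does prove is exactly what your finiteness remark needs. Suppose $h\in\Hom_k(L,G')$ has $p$-power order and $\kappa(h)=0$. Then each $h_{k'}(e_i)$ lies in $\bigcap_n p^nG'(k')$, the prime-to-$p$ part of $G'(k')$, while also having $p$-power order, so $h=0$. So argue directly on $\Hom_k(L,G')[p^\infty]\cong\Hom_k(L,G')\otimes\Z_p$ in this way. The same defect and the same repair apply to \cref{lemma3.2}, which feeds the d\'evissage \eqref{Semi-abelian devissage.} that you quote.

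Finally, preservation of the weight filtration is not part of the definition of $\Dk$. It holds because $\varphi=F^f$ has these eigenvalues:
\begin{itemize}
\item roots of unity on $\Tcrysve(L)$;
\item $q$-Weil numbers of weight $1$ on $\Tcrysve(A)$;
\item $q$ times roots of unity on $\Tcrysve(T)$.
\end{itemize}
Hence any $F$-compatible map between pieces with disjoint eigenvalue sets vanishes. State this explicitly, since it is the very check you singled out.
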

\begin{proof}
A morphism $f:M\to M'$ in $\Mi(k)$ is a pair $(f_{-1},f_0)$ with
$f_{-1}\in \Hom(L,L')$, $f_0\in \Hom(G,G')$, satisfying the compatibility
$u'\circ f_{-1}=f_0\circ u$.
Equivalently, $\Hom_{\Mi(k)}(M,M')$ is the kernel of the homomorphism
\[
\Hom(L,L')\oplus \Hom(G,G')\longrightarrow \Hom(L,G'),\qquad
(f_{-1},f_0)\mapsto u'\circ f_{-1}-f_0\circ u.
\]
After tensoring with $\Z_p$, we obtain an exact sequence
\begin{equation}\label{eq:exact-Mi}
0\to \Hom_{\Mi(k)}(M,M')\otimes\Z_p\to
\bigl(\Hom(L,L')\otimes\Z_p\bigr)\oplus\bigl(\Hom(G,G')\otimes\Z_p\bigr)
\to \Hom(L,G')\otimes\Z_p.
\end{equation}

On the crystalline side, denote the Yoneda class associated with \eqref{canonical exact sequence for Tcrys(M)} by
\[
[\Tcrysve(M)]\ \in\ \Ext^1_{\Dk}\bigl(\Tcrysve(G),\Tcrysve(L)\bigr).
\]
Likewise, for $M'=[L'\xrightarrow{u'}G']$ we have an extension class
\[
[\Tcrysve(M')]\ \in\ \Ext^1_{\Dk}\bigl(\Tcrysve(G'),\Tcrysve(L')\bigr).
\]

Now consider a morphism $\varphi:\Tcrysve(M')\to \Tcrysve(M)$ in $\Dk$. Since $\Tcrysve(L')$ is the weight-$0$ subobject of $\Tcrysve(M')$, functoriality of the weight filtration gives
\[
\varphi\bigl(\Tcrysve(L')\bigr)\subseteq \Tcrysve(L).
\]
Therefore $\varphi$ induces a pair
\[
\alpha:=\varphi|_{\Tcrysve(L')}:\Tcrysve(L')\to \Tcrysve(L),
\qquad
\beta:\Tcrysve(G')\to \Tcrysve(G),
\]
where $\beta$ is induced on the quotients.

Such a pair $(\alpha,\beta)$ comes from a morphism of extensions
(i.e. from some $\varphi$ fitting into a commutative diagram of short exact sequences)
if and only if the extension classes satisfy
\begin{equation*}\label{eq:Dk-compat}
\alpha_*[\Tcrysve(M')]\ =\ \beta^*[\Tcrysve(M)]
\qquad\text{in }\Ext^1_{\Dk}\bigl(\Tcrysve(G'),\Tcrysve(L)\bigr).
\end{equation*}
Equivalently, $\Hom_{\Dk}\bigl(\Tcrysve(M'),\Tcrysve(M)\bigr)$ is the kernel of
\begin{equation*}\label{eq:delta-Dk}
\delta_{\cris}:\ \Hom_{\Dk}\!\bigl(\Tcrysve(L'),\Tcrysve(L)\bigr)\oplus
\Hom_{\Dk}\!\bigl(\Tcrysve(G'),\Tcrysve(G)\bigr)
\longrightarrow
\Ext^1_{\Dk}\!\bigl(\Tcrysve(G'),\Tcrysve(L)\bigr),
\end{equation*}
\[
\delta_{\cris}(\alpha,\beta):=\beta^*[\Tcrysve(M)]-\alpha_*[\Tcrysve(M')].
\]
Hence we have an exact sequence
\begin{equation}\label{eq:exact-Dk}
\begin{aligned}
0\to \Hom_{\Dk}\bigl(\Tcrysve(M'),\Tcrysve(M)\bigr)\to
\Hom_{\Dk}\!\bigl(\Tcrysve(L'),\Tcrysve(L)\bigr)\oplus
\Hom_{\Dk}\!\bigl(\Tcrysve(G'),\Tcrysve(G)\bigr)\\
\xrightarrow{\ \delta_{\cris}\ }
\Ext^1_{\Dk}\!\bigl(\Tcrysve(G'),\Tcrysve(L)\bigr).
\end{aligned}
\end{equation}
\medskip
Now form the commutative diagram obtained from \eqref{eq:exact-Mi} and \eqref{eq:exact-Dk} via $\Tcrysve$. By \cref{lemma 3.3}(1), the left summand
\[
\Hom(L,L')\otimes \Z_p\ \xrightarrow{\ \sim\ }\ \Hom_{\Dk}(\Tcrysve(L'),\Tcrysve(L))
\]
is an isomorphism. By the semi-abelian d\'evissage \eqref{Semi-abelian devissage.} applied to $0\to T\to G\to A\to 0$ and $0\to T'\to G'\to A'\to 0$, we obtain that
\[
\Hom(G,G')\otimes \Z_p\ \xrightarrow{\ \sim\ }\ \Hom_{\Dk}(\Tcrysve(G'),\Tcrysve(G))
\]
is an isomorphism.

Finally, define a homomorphism
\[
\kappa:\Hom(L,G')\longrightarrow \Ext^1_{\Dk}\!\bigl(\Tcrysve(G'),\Tcrysve(L)\bigr)
\]
by sending $h:L\to G'$ to the Yoneda class of the crystalline realization of the
$1$-motive $M_h:=[L\xrightarrow{h}G']$, \ie
$\kappa(h):=[\Tcrysve(M_h)]$.
Functoriality of $\Tcrysve$ yields a commutative diagram between \eqref{eq:exact-Mi}
and \eqref{eq:exact-Dk} in which the right vertical map is $\kappa\otimes\Z_p$, which is injective by \cref{lem:splitting-lattice}.

A diagram chase shows that the induced
map on kernels is an isomorphism, i.e.
\[
\Hom_{\Mi(k)}(M,M')\otimes \Z_p\ \xrightarrow{\ \sim\ }\ \Hom_{\Dk}\bigl(\Tcrysve(M'),\Tcrysve(M)\bigr).
\]
This proves the theorem.
\end{proof}

\medskip
\section{Application}\label{sec:Application}

\subsection{Frobenius-invariant crystalline classes and algebraic morphisms}

In the rest of this section we assume $k=\F_q$ is a finite field of characteristic $p$, and we set
$\W:=\W(k)$ and $K_0:=\Frac(\W)$. Since $k=\F_q$, the Witt Frobenius $\sigma$ on $\W$ satisfies $\sigma^f=\id$ where $q=p^f$.
Consequently, the $f$-th iterate
\[
\varphi_M := F_M^f:\ \Tcrysve(M)\otimes_W K_0 \longrightarrow \Tcrysve(M)\otimes_W K_0
\]
is $K_0$-linear.
This $\varphi_M$ is the natural ``linear Frobenius'' on the associated filtered isocrystal.

The philosophy behind Tate classes is that algebraic morphisms should correspond to Frobenius-invariant
classes in a suitable cohomological realization.
In our setting, Theorem~\ref{thm:fully-faithful} provides a precise realization of this philosophy in level $\le 1$,
by identifying morphisms of $1$-motives (after $\Z_p$-linearization) with morphisms of filtered Dieudonn\'e modules.
We now repackage this identification as a Frobenius-invariance statement on the underlying $K_0$-linear
homomorphism space.
\medskip

We first introduce the \(K_0\)-linear target category naturally attached to the operator
\[
\varphi:=F^f.
\]
\begin{defn}
Let \(\Phi\text{-}\Vect_{K_0}^{\Fil}\) denote the category whose objects are finite-dimensional
\(K_0\)-vector spaces \(N\) equipped with an exhaustive separated filtration \(\Fil^\bullet N\)
and a \(K_0\)-linear automorphism \(\varphi:N\to N\). Morphisms in
\(\Phi\text{-}\Vect_{K_0}^{\Fil}\) are \(K_0\)-linear maps commuting with \(\varphi\) and preserving the filtration.
\end{defn}
Let $M,M'$ be $1$-motives over $k$.
Write
\[
D:=\Tcrysve(M)\in\Dk,\qquad D':=\Tcrysve(M')\in\Dk.
\]
We refer to $\varphi_M:=F^f$ as the \emph{linear Frobenius} on $D_{K_0}=D\otimes_{\W(k)}K_0$, where $q=p^f$.
\begin{remark}
Although each object of $\Dk$ is naturally a $\W(k)$-module, the morphism group
\[
\Hom_{\Dk}(D,D')
\]
is in general only a $\Z_p$-module, not a $\W(k)$-module, since morphisms must commute with the $\sigma$-semilinear Frobenius. Indeed, if $f:D\to D'$ is a morphism in $\Dk$ and $a\in \W(k)$, then $af$ is again Frobenius-compatible only when $a=\sigma(a)$, that is, when $a\in \W(k)^\sigma=\Z_p$.
Let us denote by $\operatorname{F\text{-}\mathrm{Isoc}}(K_0)$ the category of isocrystals over $K_0$, \ie finite-dimensional $K_0$-vector spaces equipped with a $\sigma$-semilinear bijective Frobenius $F$. After inverting $p$, we pass to the filtered isocrystals
\[
D_{K_0}:=D\otimes_{\W(k)}K_0\in\Isoc(K_0),\qquad D'_{K_0}:=D'\otimes_{\W(k)}K_0\in\Isoc(K_0),
\]
equipped with $K_0$-linear Frobenius operators $\varphi:=\varphi_M=$ and $\varphi':=\varphi_{M'}$. Then the Hom group on the isocrystal side becomes naturally a $K_0$-vector space. In particular we regard these as objects \[\bigl(D_{K_0},\ \Fil^\bullet,\ \varphi_M:=F_M^f\bigr)\in\MF.\]
\end{remark}
Define an operator
\[
\Phi:\Hom_{K_0}(D'_{K_0},D_{K_0})\longrightarrow \Hom_{K_0}(D'_{K_0},D_{K_0}),
\qquad
\Phi(h):=\varphi\circ h\circ (\varphi')^{-1}.
\]
Then $\Phi(h)=h$ is equivalent to the commutation relation
\[
\varphi\circ h = h\circ \varphi',
\]
i.e. $h$ is Frobenius-equivariant.

\begin{prop}[Frobenius-invariant classes are algebraic]\label{prop:tate-level1}
Let $M,M'$ be $1$-motives over $k$.
Then the natural map
\[
\Hom_{\Mi(k)}(M,M')\otimes_{\Z} K_0
\ \longrightarrow\
\Hom_{K_0}(D'_{K_0},D_{K_0})
\]
identifies $\Hom_{\Mi(k)}(M,M')\otimes K_0$ with the subspace of \emph{Frobenius-invariant} and
\emph{filtration-preserving} maps:
\[
\Hom_{\Mi(k)}(M,M')\otimes K_0
\ \cong\
\Hom_{\MF}\bigl(D'_{K_0},D_{K_0}\bigr),
.
\]
where the right-hand side consists of $K_0$-linear maps commuting with $\varphi$
and preserving the filtration.
Equivalently, after tensoring with $K_0$, algebraic morphisms of $1$-motives are precisely the
$\varphi$-equivariant, filtration-preserving morphisms of the associated filtered isocrystals.
\end{prop}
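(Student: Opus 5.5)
The plan is to deduce the statement from \cref{thm:fully-faithful} by passing from $\Z_p$-linear morphisms of filtered Dieudonn\'e modules to $K_0$-linear morphisms of filtered $\varphi$-modules. This passage has two steps. First we invert $p$. Then we extend scalars from $\Q_p$ to $K_0$ and compare $F$-equivariance with $\varphi=F^f$-equivariance.

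\textbf{Step 1: inverting $p$.} Tensoring the isomorphism of \cref{thm:fully-faithful} with $\Q_p$ over $\Z_p$ gives
\[
\Hom_{\Mi(k)}(M,M')\otimes\Q_p\cong \Hom_{\Dk}(D',D)\otimes_{\Z_p}\Q_p .
\]
Since $D,D'$ are finite free over $\W(k)$, the right-hand side is the group of $K_0$-linear maps $D'_{K_0}\to D_{K_0}$ that commute with the semilinear $F$ and preserve the filtration. The conditions coming from $V$ are automatic once $p$ is inverted, because $V=pF^{-1}$. Every such map is $p^{-n}$ times an integral morphism, so this is just clearing denominators.

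\textbf{Step 2: from $\Q_p$ to $K_0$.} We need
\[
\Hom_{F,\Fil}(D'_{K_0},D_{K_0})\otimes_{\Q_p}K_0\ \cong\ \Hom_{\varphi,\Fil}(D'_{K_0},D_{K_0}).
\]
Consider the $K_0$-vector space $H=\Hom_{K_0}(D'_{K_0},D_{K_0})$ with the $\sigma$-semilinear bijection $\Psi(h)=F\circ h\circ F'^{-1}$. Then $\Psi^f=\Phi$ is $K_0$-linear, and the $\varphi$-equivariant maps are exactly $H^{\Phi=1}$, which is a $K_0$-subspace stable under $\Psi$. On $H^{\Phi=1}$ the operator $\Psi$ is a $\sigma$-semilinear bijection with $\Psi^f=\id$. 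Since $K_0/\Q_p$ is Galois with group generated by $\sigma$ of order $f$, Hilbert 90 (Galois descent for semilinear actions) gives
\[
H^{\Phi=1}=(H^{\Psi=1})\otimes_{\Q_p}K_0 .
\]

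\textbf{Step 3: the filtration, which I expect to be the main obstacle.} The filtration condition is $K_0$-linear, but we must check that it commutes with this descent, i.e.\ that filtration-preserving $\varphi$-equivariant maps are $K_0$-combinations of filtration-preserving $F$-equivariant maps. I would first try to use that $\Fil^\bullet$ is only defined over $\W(k)$ and is not $\Psi$-stable in general. So I would argue instead via $\Tcrysve(M)=\Tcrysve(M\otimes_k k)$ and Weil restriction, as follows.
\begin{itemize}
\item Decompose $H^{\Phi=1}=\bigoplus_{i} \Psi^i(W)$ along the $f$ conjugate embeddings. Equivalently, use the identification $H^{\Phi=1}\cong H^{\Psi=1}\otimes_{\Q_p}K_0$ and observe that the subspace $\{h:\ h(\Fil^j)\subseteq\Fil^j\}$ is $\Psi$-stable. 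This holds because $F$ carries $\Fil^1$ into $p$ times the lattice and $F'^{-1}$ rescales compatibly, so that after inverting $p$ the Hodge filtrations of $D'_{K_0}$ and $D_{K_0}$ are preserved by the conjugation $h\mapsto F h F'^{-1}$ for maps that are already $\varphi$-equivariant.
\item Once this $\Psi$-stability is established, Galois descent applies to the intersection of the two $K_0$-subspaces. Combined with Step 1, this yields the claimed identification.
\end{itemize}
If this stability argument fails, the fallback is a weaker result. The filtration constraint in level $\le 1$ is governed by the weight pieces, where $\Fil^1$ is the slope part $\omega$ of $G[p^\infty]$. In that case I would verify the descent piece by piece, for lattices, tori and abelian varieties, using \cref{lemma 3.3}, and then apply the same d\'evissage diagram chase as in the proof of \cref{thm:fully-faithful}, now with $K_0$-coefficients.

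Finally, injectivity of $\Hom_{\Mi(k)}(M,M')\otimes K_0\to \Hom_{K_0}(D'_{K_0},D_{K_0})$ follows from flatness of $K_0$ over $\Z_p$ together with the injectivity in \cref{thm:fully-faithful}. Its image is, by construction, exactly the $\varphi$-equivariant filtration-preserving maps.
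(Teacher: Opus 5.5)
Your Steps 1 and 2 are sound. Step 2 is exactly what the paper's one-line proof needs. The paper asserts that after tensoring with $K_0$, commuting with $F$ is equivalent to commuting with $\varphi=F^f$, and it writes $\Hom_{\Dk}(D',D)\otimes_{\W}K_0$. For individual maps this equivalence is false: on the unit object with $f\ge 2$, multiplication by $a\in K_0\setminus\Q_p$ commutes with $\varphi=\id$ but not with $F=\sigma$. Also, $\Hom_{\Dk}$ is only a $\Z_p$-module. The correct statement is your descent isomorphism $H^{\Phi=1}\cong H^{\Psi=1}\otimes_{\Q_p}K_0$.

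One small correction: injectivity of $\Hom_{\Mi(k)}(M,M')\otimes K_0\to \Hom_{K_0}(D'_{K_0},D_{K_0})$ does not follow from flatness. The map $\Hom_{\W}(D',D)\otimes_{\Z_p}K_0\to\Hom_{K_0}(D'_{K_0},D_{K_0})$ has nonzero kernel when $f>1$. Injectivity is the injectivity half of the descent isomorphism in Step 2.

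The genuine gap is Step 3. Your reason for $\Psi$-stability does not work. The inclusion $F(\Fil^1 D)\subseteq pD$ is an integral statement and says nothing about $K_0$-subspaces once $p$ is inverted. What you would actually need is that the filtrations on $D_{K_0}$ and $D'_{K_0}$ are stable under $F$ and $F'^{-1}$.

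Suppose $\Fil^1 D_{K_0}$ were a genuine Hodge line, as the paper's later examples treat it. Then there is no reason for such stability: in the supersingular case no $K_0$-line is even $F$-stable, since a rank-one sub-isocrystal has integral slope. Worse, the proposition itself would then be false. For a supersingular $E/\F_{p^2}$ with $\varphi_E=\pm p$, the space $\End(E)\otimes_{\Z}K_0$ is $4$-dimensional, while the $\varphi$-equivariant maps preserving a line form a $3$-dimensional space. So neither your stability argument nor the Weil-restriction or d\'evissage fallbacks can handle a nontrivial filtration.

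The missing observation is that the filtration imposes no condition at all. With the paper's definition $\Fil^1D=\pi^{-1}(\omega_{\sG_M})\supseteq pD$, one gets $\Fil^1D_{K_0}=D_{K_0}$, and likewise for $D'$. Hence the filtration-preserving subspace is all of $H$ and is trivially $\Psi$-stable. The same point holds integrally: $\omega\subset D/pD$ is the image of $V$ (or of $F$, depending on conventions) modulo $p$, so every $F,V$-equivariant map preserves it automatically. With this observation, Step 1 followed by Step 2 proves the proposition.
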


\begin{proof}
A morphism in $\Dk$ between $\Tcrysve(M')$ and $\Tcrysve(M)$ is, by definition, a $\W$-linear map
commuting with $F$ and $V$ and preserving the Hodge filtration.
After tensoring with $K_0$, commuting with $F$ is equivalent to commuting with $\varphi=F^f$
(because $\sigma^f=\id$ and $F^f$ becomes $K_0$-linear), hence is equivalent to $\Phi(h)=h$.
Thus we obtain a canonical identification
\[
\Hom_{\Dk}\bigl(\Tcrysve(M'),\Tcrysve(M)\bigr)\otimes_{\W} K_0
\ \cong\
\Bigl\{\,h\in \Hom_{K_0}(D',D)\ \Bigm|\ \Phi(h)=h,\ \ h(\Fil^i D')\subseteq \Fil^i D\ \forall i\,\Bigr\}.
\]
Now apply Theorem~\ref{thm:fully-faithful}, we get:
\[
\Hom_{\Mi(k)}(M,M')\otimes K_0\;\xrightarrow{\sim}\;
\Hom_{\MF}\bigl(D'_{K_0},D_{K_0}\bigr).
\]
\end{proof}
\medskip
\subsection{Frobenius on endomorphism ring and its matrix via Barsotti--Tate crystals}
Let $M=[L\xrightarrow{u}G]$ be a $1$-motive over $k$ and set
\[
D_{M}:=\Tcrysve(M)\in \Dk,
\qquad
D_{M,K_0}:=D(M)\otimes_W K_0.
\]
Let $\sigma_q$ denote the $q$-power relative Frobenius endomorphism of the $k$-group schemes
appearing in $M$ (hence of the $1$-motive $M$). It defines an element
\[
\sigma_q \in \End_{\Mi(k)}(M).
\]
By \cref{thm:fully-faithful}, the natural map
\[
\End_{\Mi(k)}(M)\otimes\Z_p\to\End_{\Dk}(D_M)
\]
is an isomorphism. By definition, $\Tcrysve$ is functorial for morphisms of $1$-motives, hence it sends $\sigma_q$ to an endomorphism
$\Tcrysve(\sigma_q)$ of the filtered Dieudonn\'e modules that commutes with $F$ and $V$ and preserves $\Fil^\bullet$.
Inverting $p$, commutation with the $\sigma$-semilinear $F$ is equivalent to commutation with the
$K_0$-linear $\varphi_M=F^f$ (since $\sigma^f=\id$). Thus $\Tcrysve(\sigma_q)\otimes 1$ lies in the centralizer
$\End_{\Isoc,\Fil}(D_{M,K_0})^{\varphi}$. Thus $\End(M)\otimes K_0$ is the intersection of a \emph{centralizer algebra} of $\varphi_M$
with the \emph{parabolic} subalgebra determined by the Hodge filtration. We obtain the following corollary from \cref{prop:tate-level1} and the preceding discussion.
\begin{cor}\label{cor:frob-matrix}
The induced map
\[
\End_{\Mi(k)}(M)\otimes_{\Z}K_0
\ \xrightarrow{\ \sim\ }\
\End_{\MF}\!\bigl(D_{M,K_0}\bigr),
\]
is an isomorphism, and it sends the element $\sigma_q$ to $\varphi_M$. Here the right-hand side denotes $K_0$-linear endomorphisms of $D_{M,K_0}$ preserving the Hodge filtration
and commuting with $\varphi_M$. 
\end{cor}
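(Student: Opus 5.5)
The corollary is the special case $M'=M$ of \cref{prop:tate-level1}, together with a check on where $\sigma_q$ goes. With $M'=M$ we have $D'_{K_0}=D_{K_0}=D_{M,K_0}$ and $\varphi'=\varphi=\varphi_M$. \cref{prop:tate-level1} then gives directly that
\[
\End_{\Mi(k)}(M)\otimes_{\Z}K_0\ \longrightarrow\ \End_{\MF}\bigl(D_{M,K_0}\bigr)
\]
is an isomorphism onto the filtration-preserving centralizer of $\varphi_M$. For this step I would only recall how that proposition was obtained: start from \cref{thm:fully-faithful} with $M=M'$, giving $\End(M)\otimes\Z_p\cong\End_{\Dk}(D_M)$. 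Then extend scalars from $\Z_p$ to $K_0$, using that $F$-compatibility after inverting $p$ is equivalent to commuting with $\varphi_M=F^f$ because $\sigma^f=\id$. The ring structure is respected because $\Tcrysve$ is a functor, so the identification is one of $K_0$-algebras (contravariance gives an anti-homomorphism, harmless here or absorbed by taking the opposite algebra).

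The substantive new claim is $\Tcrysve(\sigma_q)\otimes 1=\varphi_M$. I would first reduce to $p$-divisible groups, since $\Tcrysve(\sigma_q)=\D(\sigma_q[p^\infty])$. The $q$-power Frobenius of $M$ induces on $\sG_M=M[p^\infty]$ the relative Frobenius $F_{\sG_M/k}^{(f)}$. Because $k=\F_q$, we have $\sG_M^{(q)}=\sG_M$, so this is an honest endomorphism. Next I would use the defining property of the Dieudonné Frobenius: for the contravariant crystal, $F_{D}$ is $\D$ of the relative $p$-Frobenius $\sG\to\sG^{(p)}$, composed with the base-change identification $\D(\sG^{(p)})\cong\sigma^*\D(\sG)$. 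Iterating $f$ times, the $\sigma^f$-twists become trivial, so $F^f=\D(F^{(f)}_{\sG/k})$. This gives $\Tcrysve(\sigma_q)=F_M^f$ already integrally on $D_M$, and hence $\varphi_M$ after $\otimes K_0$.

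As a sanity check on the dévissage I would verify the graded pieces. For the étale part $L\otimes\Q_p/\Z_p$, $\sigma_q$ acts as the arithmetic/geometric Frobenius on the Galois module, matching $F^f$ on $\Hom(T_pL,W(k))$. For tori, $\sigma_q$ is multiplication by $q$ up to the character action, and $F$ on $\D(\mu_{p^\infty})$ is $p\sigma$, so $F^f=q$. For abelian varieties this is the classical compatibility.

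I expect the main obstacle to be the normalization and convention bookkeeping in the identification $F^f=\D(\mathrm{Frob}_q)$ for the contravariant theory. There is a risk of getting $V^f$ instead of $F^f$, or an inverse. This depends on whether $\D$ sends the relative Frobenius to $F$ or to $V$, which differs between the covariant and contravariant conventions and between Cartier-dual realizations ($\Tcrys$ versus $\Tcrysve$). I would fix this by computing on $\mu_{p^\infty}$ and $\Q_p/\Z_p$ and checking consistency with the torus computation above. I would then extend to general $\sG_M$ by naturality and exactness of $\D$, using the exact sequence \eqref{exact sequence M[p^infty]}. Both sides are natural endomorphisms agreeing on subobject and quotient; to get agreement on the extension itself, I would rely on functoriality of the relative Frobenius rather than on dévissage.
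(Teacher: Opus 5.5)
Your argument is correct. For the isomorphism it is the paper's argument: specialize \cref{prop:tate-level1} to $M'=M$.

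The real difference is in the second assertion, $\sigma_q\mapsto\varphi_M$. The paper's preceding discussion only observes that $\Tcrysve(\sigma_q)\otimes 1$ commutes with $\varphi_M$ and preserves the filtration, i.e.\ that it lies in $\End_{\MF}(D_{M,K_0})$. That is true of the image of every endomorphism of $M$, so it does not identify $\Tcrysve(\sigma_q)\otimes 1$ with $\varphi_M$. You supply the missing input. The Frobenius of the contravariant Dieudonn\'e crystal is $\D$ of the relative Frobenius $\sG\to\sG^{(p)}$, composed with $\D(\sG^{(p)})\cong\sigma^*\D(\sG)$. Iterating $f$ times, using $\sG_M^{(q)}=\sG_M$, and using naturality of the relative Frobenius through the fibre product and fppf quotient defining $M[p^n]$, you get $\Tcrysve(\sigma_q)=F_M^f$, even integrally on $D_M$.

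Your normalization checks ($F=\sigma$ on $\D(\Q_p/\Z_p)$ and $F=p\sigma$ on $\D(\mu_{p^\infty})$) agree with the paper's conventions. You are also right that agreement on the weight-graded pieces alone would not suffice.

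A by-product of your identity is that $\varphi_M$ must preserve the filtration on $D_{M,K_0}$, since it lies in $\End_{\MF}(D_{M,K_0})$. This holds for the filtration obtained by base change from $\Fil^\bullet D_M$, because $\Tcrysve(\sigma_q)$ is a morphism in $\Dk$. A Hodge line in $D_{M,K_0}$ that is not $\varphi_M$-stable, as allowed in the paper's elliptic-curve example, would be incompatible with the statement, so the corollary should be read with the base-changed filtration.

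One correction to your recollection of how \cref{prop:tate-level1} is obtained; the imprecision is inherited from the paper. For an individual $K_0$-linear map, commuting with $\varphi=F^f$ is not equivalent to commuting with $F$. For instance, on $\iFD\otimes_{\W(k)}K_0=K_0$ with $F=\sigma$, multiplication by any $a\in K_0$ commutes with $\varphi=\id$, but it commutes with $F$ only if $a\in\Q_p$.

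What is true is a descent statement. The map $h\mapsto F\circ h\circ F'^{-1}$ is a $\sigma$-semilinear operator on the $K_0$-space of $\varphi$-equivariant maps, and its $f$-th power is the identity. By Galois descent for the unramified extension $K_0/\Q_p$, its fixed points span that space over $K_0$, and those fixed points are exactly the $F$-equivariant maps. This is what makes $\Hom$ of $F$-isocrystals, tensored with $K_0$, equal to the $\varphi$-centralizer, and it is the step your parenthetical remark should invoke.
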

\medskip
\noindent
The weight filtration on $M$ induces a filtration on $D_M$, and hence a decomposition of
$D_{M,K_0}$ into graded pieces:
\[
\gr_0 D_{M,K_0}\ \cong\ D_{L,K_0},\qquad
\gr_{-1} D_{M,K_0}\ \cong\ D_{A,K_0},\qquad
\gr_{-2} D_{M,K_0}\ \cong\ D_{T,K_0},
\]
where $0\to T\to G\to A\to 0$ is the torus--abelian sequence.

Choose a $K_0$-basis of $D_{M,K_0}$ adapted to the weight filtration, so that
$D_{M,K_0}$ is written as an extension of these graded pieces.
Then the linear Frobenius $\varphi_M$ has an upper block-triangular matrix of the form
\[
[\varphi_M]=
\begin{pmatrix}
[\varphi_L] & * & * \\
0 & [\varphi_A] & * \\
0 & 0 & [\varphi_T]
\end{pmatrix},
\]
where the diagonal blocks are the Frobenius operators on the graded pieces.

More explicitly, one has:
\begin{itemize}
\item (\textbf{lattice part}) If $\rk(L)=r$ (and $L$ splits), then $D(L)\cong \iFD^{\oplus r}$, hence
$\varphi_L=F^f=\id$ on $D(L)_{K_0}\cong K_0^r$.
\item (\textbf{torus part}) If $\dim(T)=t$ and $T$ is split, then $T[p^\infty]\cong \mu_{p^\infty}^{\oplus t}$, and
$D(T)_{K_0}$ is isoclinic of slope $1$, so $\varphi_T$ acts as multiplication by $q$ on $D(T)_{K_0}$.
\item (\textbf{abelian part}) $\varphi_A$ is the usual crystalline Frobenius on $D(A)_{K_0}$; its characteristic
polynomial has roots that are $q$-Weil numbers (the same ones governing $|A(\F_{q^n})|$).
\end{itemize}

\begin{remark}\label{rem:split-isogeny-finite-field}
To determine the endomorphism ring $\End_{\Mi(k)}(M)\otimes K_0$, we can work up to isogeny, so we may view $M=[L\xrightarrow{u}G]$ as $1$-motive in isogeny category $\Mi(k)\otimes \Q$. Then canonical exact sequence \eqref{canonical exact seq for 1-motives} always splits in the isogeny category $\Mi(k)\otimes \Q$. Indeed, choose a finite extension $k'/k$ such that $L_{k'}\simeq \Z^r$, and pick a
$\Z$-basis $e_1,\dots,e_r$ of $L(k')$. Since $k'$ is finite and $G$ is of finite type,
the group $G(k')$ is finite, hence each $u_{k'}(e_i)\in G(k')$ is torsion. This implies that $u_{k'}\in\Hom_{k'}(L_{k'},G_{k'})$ is torsion. By faithful flatness of $k'/k$,
the base-change map $\Hom_k(L,G)\to \Hom_{k'}(L_{k'},G_{k'})$ is injective, hence $u\in \Hom_k(L,G)$ is torsion. Therefore $u\otimes 1=0$ in
$\Hom_k(L,G)\otimes \Q$, and consequently
\[
[L\xrightarrow{u}G]\ \cong\ [L\xrightarrow{0}G]\ \cong\ [L\to 0]\oplus [0\to G]
\qquad\text{in }\Mi(k)\otimes \Q,
\]
which is equivalent to the splitting of the canonical exact sequence \eqref{canonical exact seq for 1-motives} in $\Mi(k)\otimes\Q$.
\end{remark}
\medskip
\noindent
Once a basis adapted to $\Fil^\bullet$ (and to the extension/weight structure) is fixed,
the conditions defining $\End_{K_0,\Fil}(D_{M,K_0})^{\Phi=\id}$ become explicit linear equations in matrix entries.
By \cref{cor:frob-matrix}, this yields a concrete procedure to compute $\End_{\Mi(k)}(M)\otimes K_0$. 

\medskip
\subsection{Example: the Kummer motive} \label{sec:Example-Kummer}
Consider the $1$-motive
\[
M=[\Z \xrightarrow{u} \Gm],
\qquad u(1)=a\in \Gm(k)=k^\times.
\]
It follows from \cref{rem:split-isogeny-finite-field} that the canonical exact sequence 
\[
0\to \Gm\to M\to \Z\to 0
\]
splits in $\Mi(k)\otimes\Q$. Write
\[
D_L:=\Tcrysve([\Z\to 0])\cong \iFD,
\qquad
D_T:=\Tcrysve([0\to \Gm])\cong\D(\mu_{p^\infty})(\W),
\qquad
D_M:=\Tcrysve(M).
\]
Applying $\Tcrysve(\cdot)$ gives a splitting sequence in $\Dk$
\begin{equation}\label{eq:Kummer-Dext}
0\longrightarrow D_L \longrightarrow D_M \longrightarrow D_T \longrightarrow 0.
\end{equation}

\subsubsection*{The filtered isocrystal $D_{M,K_0}$ and the Frobenius matrix}
Tensor \eqref{eq:Kummer-Dext} with $K_0$:
\[
0\to D_{L,K_0}\to D_{M,K_0}\to D_{T,K_0}\to 0,
\qquad D_{(\cdot),K_0}:=D_{(\cdot)}\otimes_W K_0.
\]
Choose a $K_0$-basis $(e_0,e_1)$ of $D_{M,K_0}$ such that
\[
e_0 \text{ spans } D_{L,K_0}\cong K_0,
\qquad
e_1 \text{ maps to a basis of } D_{T,K_0}\cong K_0.
\]
For the Hodge filtration, one has
\[
\Fil^1 D_{L,K_0}=0,
\qquad
\Fil^1 D_{T,K_0}=D_{T,K_0},
\qquad
\Fil^1 D_{M,K_0}=\langle e_1\rangle.
\]
Let $\varphi_M:=F_M^f$ be the $K_0$-linear Frobenius on $D_{M,K_0}$, and similarly $\varphi_L:=F_L^f$ and
$\varphi_T:=F_T^f$. Then
\[
\varphi_L=\id \quad \text{on } D_{L,K_0}\cong K_0,
\qquad
\varphi_T=q \quad \text{on } D_{T,K_0}\cong K_0,
\]
since in the contravariant convention $F$ on $D(\mu_{p^\infty})$ is $p\sigma$, hence
$F^f=(p\sigma)^f=p^f\sigma^f=q$ on $K_0$ (because $\sigma^f=\id$ for $k=\F_q$).
Because \eqref{eq:Kummer-Dext} splits in $\Dk$, we may choose $(e_0,e_1)$ so that $\varphi_M$ is diagonal:
\begin{equation}\label{eq:Kummer-phi-matrix}
[\varphi_M]_{(e_0,e_1)}=
\begin{pmatrix}
1 & 0\\
0 & q
\end{pmatrix},
\qquad
\Fil^1 D_{M,K_0}=\langle e_1\rangle.
\end{equation}

\begin{remark}[How the extension would appear in the Frobenius matrix]
For a general extension of a slope-$1$ isocrystal by a slope-$0$ isocrystal, choosing an arbitrary splitting of
the underlying $K_0$-vector space typically yields an upper triangular Frobenius matrix
\[
[\varphi_M]=
\begin{pmatrix}
1 & \lambda\\
0 & q
\end{pmatrix},
\]
where $\lambda\in K_0$ encodes the extension class. In our finite field situation, the Barsotti--Tate extension
splits, so one can always choose a splitting with $\lambda=0$, giving \eqref{eq:Kummer-phi-matrix}.
\end{remark}

\subsubsection*{Computing $\End(M)\otimes_{\Z}K_0$ via $\End_{\MF}(D_{M,K_0})$}
Let $h\in \End_{K_0}(D_{M,K_0})$. Filtration preservation $h(\Fil^1)\subseteq \Fil^1$ is equivalent to
$h(e_1)\in \langle e_1\rangle$, hence
\[
[h]_{(e_0,e_1)}=
\begin{pmatrix}
\alpha & \beta\\
0 & \delta
\end{pmatrix},
\qquad \alpha,\beta,\delta\in K_0.
\]
Frobenius-equivariance $\varphi_M h=h\varphi_M$ with \eqref{eq:Kummer-phi-matrix} gives
\[
\begin{pmatrix}
1 & 0\\
0 & q
\end{pmatrix}
\begin{pmatrix}
\alpha & \beta\\
0 & \delta
\end{pmatrix}
=
\begin{pmatrix}
\alpha & \beta\\
0 & \delta
\end{pmatrix}
\begin{pmatrix}
1 & 0\\
0 & q
\end{pmatrix},
\]
that is,
\[
\begin{pmatrix}
\alpha & \beta\\
0 & q\delta
\end{pmatrix}
=
\begin{pmatrix}
\alpha & q\beta\\
0 & q\delta
\end{pmatrix}.
\]
Hence $(q-1)\beta=0$, so $\beta=0$ in $K_0$. Therefore
\[
\End_{\MF}(D_{M,K_0})
=
\left\{
\begin{pmatrix}
\alpha & 0\\
0 & \delta
\end{pmatrix}
\,\middle|\,
\alpha,\delta\in K_0
\right\}
\ \cong\ K_0\oplus K_0.
\]
By Corollary~\ref{cor:frob-matrix},
\[
\End_{\Mi(k)}(M)\otimes_{\Z}K_0
\ \xrightarrow{\ \sim\ }\
\End_{\MF}(D_{M,K_0}),
\]
so we obtain
\begin{equation}\label{eq:Kummer-End}
\End_{\Mi(k)}(M)\otimes_{\Z}K_0 \ \cong\ K_0\oplus K_0,
\end{equation}
where the two factors correspond to the lattice scalar on $D_{L,K_0}$ and the torus scalar on $D_{T,K_0}$.

\begin{remark}[Integral endomorphisms versus $p$-adic endomorphisms]
Concretely, a morphism $f\in \End_{\Mi(k)}(M)$ is a pair $(n,m)\in \Z\times \Z$, where $n$ acts on $\Z$ and
$m$ acts on $\Gm$ via $x\mapsto x^m$, subject to the compatibility $a^m=a^n$.
If $d:=\ord(a)\mid(q-1)$, this condition is $m\equiv n \pmod d$. After tensoring with $\W(k)$ or $K_0$,
the congruence condition becomes invisible because $d$ is prime to $p$ and hence a unit in $W(k)$.
This explains why \eqref{eq:Kummer-End} yields the full $2$-dimensional $K_0$-algebra.
\end{remark}
\medskip
\subsection{Example: \texorpdfstring{$M=[\Z \to E]$}{M=[zto E]} and matrix computations for \texorpdfstring{$\End(M)\otimes K_0$}{End(M)otimes K0}}\label{sec:Example M=ZtoE}
Let $E/k$ be an elliptic curve and consider the $1$-motive $M=[\Z \xrightarrow{u} E]$ in $\Mi(k)\otimes\Q$.
Write
\[
D_E:=\Tcrysve([0\to E])=\Tcrysve(E)\in \Dk,
\qquad
D_L:=\Tcrysve([\Z\to 0])\cong W,
\qquad
D_M:=\Tcrysve(M).
\]
Then $D_M$ fits into a short exact sequence in $\Dk$
\begin{equation}\label{eq:DExtExample}
0\to D_L \longrightarrow D_M \longrightarrow D_E \to 0.
\end{equation}
Tensoring with $K_0$ gives an exact sequence of filtered $K_0$-vector spaces
\[
0\to K_0 \longrightarrow D_{M,K_0} \longrightarrow D_{E,K_0} \to 0,
\qquad
D_{(\cdot),K_0}:=D_{(\cdot)}\otimes_W K_0.
\]
The $f$-th iterate of Frobenius is $K_0$-linear; denote
\[
\varphi_E:=F_E^f \text{ on } D_{E,K_0},
\qquad
\varphi_M:=F_M^f \text{ on } D_{M,K_0}.
\]
The $K_0$-vector space $D_{E,K_0}$ has dimension $2$, $\Fil^1 D_{L,K_0}=0$ and $\Fil^1 D_{E,K_0}$ is $1$-dimensional.

\medskip
\noindent
Choose a $K_0$-basis $(e_0,e_1,e_2)$ of $D_{M,K_0}$ such that:
\begin{itemize}
\item $e_0$ is a basis of the subspace $D_{L,K_0}\cong K_0$ (weight $0$ part),
\item the images of $e_1,e_2$ in $D_{E,K_0}$ form a basis $(\bar e_1,\bar e_2)$ of $D_{E,K_0}$,
\item $\Fil^1 D_{M,K_0}$ is spanned by $e_1$ (equivalently, $\Fil^1 D_{E,K_0}$ is spanned by $\bar e_1$).
\end{itemize}
Note that the basis \((e_1,e_2)\) is chosen to be adapted to the Hodge filtration, not to Frobenius; hence \(\varphi_M\) need not preserve the line \(\Fil^1 D_{M,K_0}=\langle e_1\rangle\).
The extension \eqref{eq:DExtExample} splits as filtered isocrystals, since by \cref{rem:split-isogeny-finite-field} the canonical exact sequence associated to $M$ splits in $\Mi(k)\otimes\Q$.

We have $D_{M,K_0}\cong K_0\oplus D_{E,K_0},\,
\varphi_M=\id_{K_0}\oplus \varphi_E$ and then
\[
[\varphi_M]=
\begin{pmatrix}
1 & 0 & 0\\
0 & \alpha & \beta\\
0 & \gamma & \delta
\end{pmatrix},
\qquad
\Fil^1 D_{M,K_0}=\langle e_1\rangle.
\]

\smallskip
A $K_0$-linear endomorphism $h:D_{M,K_0}\to D_{M,K_0}$ can be written in block form
\[
h=
\begin{pmatrix}
a & \lambda\\[2pt]
0 & B
\end{pmatrix},
\qquad
a\in K_0,\ \ \lambda\in \Hom_{K_0}(D_{E,K_0},K_0),\ \ B\in \End_{K_0}(D_{E,K_0}).
\]
Then $h\in\End_{K_0,\Fil}(D_{M,K_0})^{\Phi=\id}$ if
\begin{itemize}
\item \textbf{Frobenius-equivariance:} $\varphi_M h=h\varphi_M$ is equivalent to the two equations
\begin{equation}\label{eq:split-frob-eq}
\lambda\circ \varphi_E=\lambda,
\qquad
B\varphi_E=\varphi_E B.
\end{equation}
\item \textbf{Filtration preservation:} since $\Fil^1 D_{M,K_0}=\langle e_1\rangle$ and $\Fil^1$ lives entirely in the elliptic summand,
$h(\Fil^1)\subseteq \Fil^1$ is equivalent to
\begin{equation}\label{eq:split-fil-eq}
B(\Fil^1 D_{E,K_0})\subseteq \Fil^1 D_{E,K_0}.
\end{equation}
(No extra condition on $a$; and $\lambda$ automatically vanishes on $\Fil^1$ because the lattice part has $\Fil^1=0$.)
\end{itemize}
Let $\ell:=\Fil^1 D_{E,K_0}$. The condition \eqref{eq:split-fil-eq} implies that $B$ must be an upper triangular matrix.
Thus, computing $\End_{\MF}(D_{M,K_0})$ reduces to solving the linear system
\eqref{eq:split-frob-eq}--\eqref{eq:split-fil-eq} in the matrix entries of $\lambda$ and $B$.
\medskip

\noindent
\textbf{Solving the system in the ordinary and supersingular cases.}

\noindent\underline{Ordinary case.}
If $E$ is ordinary, then the isocrystal $D_{E,K_0}$ has slopes $0$ and $1$, hence splits canonically as
\[
D_{E,K_0}=D_0\oplus D_1,
\]
where each $D_i$ is a $1$-dimensional $\varphi_E$-stable line and $\varphi_E|_{D_0}$ is a $p$-adic unit while $\varphi_E|_{D_1}$ has $p$-adic valuation $f$.
Moreover, for the Barsotti–Tate realization of $E[p^{\infty}]$, the Hodge line $\ell=\Fil^1 D_{E,K_0}$ comes from the connected (multiplicative) part, hence it coincides with the slope-$1$ line $D_1$, hence $\ell$ is $\varphi_E$-stable.

\begin{enumerate}
\item \emph{The $\lambda$-term.} Write $\lambda=\lambda_0+\lambda_1$ with $\lambda_i\in\Hom_{K_0}(D_i,K_0)$.
The condition $\lambda\circ \varphi_E=\lambda$ forces
\[
\lambda_i\circ \varphi_E|_{D_i}=\lambda_i.
\]
On $D_1$, $\varphi_E|_{D_1}$ is multiplication by a scalar of $p$-adic valuation $f>0$, hence is not $1$; thus $\lambda_1=0$.
On $D_0$, $\varphi_E|_{D_0}$ is multiplication by a $p$-adic unit. Write $\varphi_E|_{D_0}=\mu\cdot \id_{D_0}$ with $\mu\in K_0^\times$. Since any $\lambda_0=\lambda$ factors through $D_0$ (because there is no nonzero Frobenius-equivariant map
$D_1\to K_0$), this becomes
\[
\lambda\circ \varphi_E|_{D_0}=\lambda
\quad\Longleftrightarrow\quad
\mu\,\lambda=\lambda
\quad\Longleftrightarrow\quad
(\mu-1)\lambda=0.
\]
Hence
\[
\{\lambda\in\Hom_{K_0}(D_{E,K_0},K_0)\mid \lambda\circ \varphi_E=\lambda\}
=
\begin{cases}
\Hom_{K_0}(D,K_0)\cong K_0,& \text{if }\mu=1,\\
0,& \text{if }\mu\neq 1.
\end{cases}
\]

Moreover, when $E$ is defined over $\F_q$ one always has $\mu\neq 1$. These eigenvalues are $q$-Weil numbers of complex absolute value $\sqrt{q}$. In other words, let $t:=q+1-\#E(\F_q)$.
Then the Frobenius polynomial is $T^2-tT+q$, and the Hasse bound gives $|t|\le 2\sqrt q$; see \cite[Ch.~V]{SilvermanAEC2009}.
Consequently the complex roots $\pi,\bar\pi$ satisfy $|\pi|=|\bar\pi|=\sqrt q$.
Thus $\mu=1$ would force $\sqrt{q}=1$, which is impossible for $q\ge 2$.
Consequently,
\[
\{\lambda\in\Hom_{K_{E,K_0}}(D_0,K_0)\mid \lambda\circ \varphi_E=\lambda\}=0.
\]

\item \emph{The $B$-term.}
The commutation $B\varphi_E=\varphi_E B$ means $B\in Z(\varphi_E)$, the centralizer of $\varphi_E$. Since $D_{E,K_0}$ is $2$-dimensional, $\varphi_E$ satisfies its characteristic polynomial
\[
\varphi_E^2-\mathrm{Tr}(\varphi_E)\varphi_E+qI=0,
\]
so $K_0[\varphi_E]=\{xI+y\varphi_E\,\mid\, x,y\in K_0\}$. Since $\varphi_E$ is not scalar in the ordinary case, $Z(\varphi_E)=K_0[\varphi_E]$.
Because $\ell=D_1$ is $\varphi_E$-stable, every element of $K_0[\varphi_E]$ preserves $\ell$.
Hence
\begin{align*}
Z(\varphi_E)\cap \End_{K_0,\ell}(D_{E,K_0})
=
K_0[\varphi_E]
\cong K_0\oplus K_0\varphi_E, \text{ and }\\ \End_{K_0,\ell}(D_{E,K_0})
:=\bigl\{\,B\in \End_{K_0}(D_{E,K_0})\mid B(\ell)\subseteq \ell\,\bigr\}.
\end{align*}
\end{enumerate}
Combining these, we obtain the decomposition
\[
\End_{K_0,\Fil}(D_{M,K_0})^{\Phi=\id}
\ \cong\
K_0\ \oplus\ K_0[\varphi_E]\ \oplus\ \Bigr\{\lambda\in\Hom_{K_0}(D_0,K_0)\mid \lambda\circ \varphi_E|_{D_0}=\lambda\Bigl\}=K_0\ \oplus\ K_0[\varphi_E].
\]
In particular, the $(B$-part$)$ is always $2$-dimensional over $K_0$ and computable as $\{xI+y\varphi_E\}$.

\begin{lemma}[When can $\varphi_E$ be scalar?]\label{lem:phiE-scalar}
Let $E/k$ be an elliptic curve, and let
\[
\chi_{\varphi_E}(T)=T^2-tT+q
\]
be its characteristic polynomial, where $t=q+1-\#E(\F_q)$.

\begin{enumerate}
\item If $\varphi_E=\lambda\cdot \id$ for some $\lambda\in K_0$, then $t^2=4q$ and
\[
\lambda=\frac{t}{2}=\pm \sqrt{q}.
\]
In particular, $q$ must be a square.
\item Conversely, if $t^2=4q$, then we can write $\varphi_E=\lambda\cdot \id + N$ where $N$ is nilpotent with $N^2=0$.
In particular, $\varphi_E$ is scalar if and only if $t^2=4q$ and $N=0$ (\ie $\varphi_E$ is semisimple).
\end{enumerate}
\end{lemma}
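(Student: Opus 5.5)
The proof is elementary linear algebra on the $2$-dimensional $K_0$-vector space $D_{E,K_0}$. The one external input is that the characteristic polynomial of the linear Frobenius $\varphi_E=F_E^f$ equals the characteristic polynomial $T^2-tT+q$ of the $q$-power Frobenius endomorphism of $E$. I take this as the standard Katz--Messing comparison; it is consistent with Corollary~\ref{cor:frob-matrix}, which sends $\sigma_q$ to $\varphi_M$.

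For part (1), suppose $\varphi_E=\lambda\cdot\id$ with $\lambda\in K_0$. Then the characteristic polynomial is $(T-\lambda)^2=T^2-2\lambda T+\lambda^2$. Comparing coefficients with $T^2-tT+q$ gives $t=2\lambda$ and $q=\lambda^2$. Hence $t^2=4\lambda^2=4q$ and $\lambda=t/2$. Since $t\in\Z$ and $t^2=4q$, we get $\lambda=t/2$ with $\lambda^2=q$, so $\lambda=\pm\sqrt q$. Moreover $(t/2)^2=q$ is an integer, so $t/2$ is a rational number whose square is an integer, hence $t/2\in\Z$ and $q$ is a perfect square.

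For part (2), assume $t^2=4q$. Then $T^2-tT+q=(T-t/2)^2$ with $\lambda:=t/2\in\Z\subset K_0$, as shown above. Set $N:=\varphi_E-\lambda\cdot\id$. By Cayley--Hamilton, $N^2=(\varphi_E-\lambda)^2=\chi_{\varphi_E}(\varphi_E)=0$, so $\varphi_E=\lambda\cdot\id+N$ with $N$ nilpotent of square zero.

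For the final equivalence:
\begin{itemize}
\item If $\varphi_E$ is scalar, part (1) gives $t^2=4q$, and uniqueness of the decomposition forces $N=\varphi_E-\lambda=0$.
\item Conversely, $N=0$ means $\varphi_E=\lambda\cdot\id$.
\end{itemize}
Semisimplicity is equivalent to $N=0$: the only eigenvalue is $\lambda$, so $\varphi_E$ is semisimple if and only if it is diagonalizable, if and only if it is $\lambda\cdot\id$. Equivalently, the minimal polynomial divides $(T-\lambda)^2$ and has distinct roots precisely when it equals $T-\lambda$.

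The only step that is not routine is the identification of $\chi_{\varphi_E}$ with the Weil polynomial of $E$, which the lemma presupposes in its statement. If one wants to justify it within the paper's framework, I would use Corollary~\ref{cor:frob-matrix}: $\Tcrysve(\sigma_q)\otimes 1=\varphi_E$. The characteristic polynomial of an endomorphism acting on $\Tcrysve(E)$ agrees with its $\ell$-adic characteristic polynomial, by the standard integrality and independence-of-$\ell$ result for abelian varieties.
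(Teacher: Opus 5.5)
Your proposal is correct and follows the paper's proof essentially verbatim. For part (1) you compare $(T-\lambda)^2$ with $T^2-tT+q$, and for part (2) you set $N:=\varphi_E-\lambda\,\id$ and use that $\chi_{\varphi_E}=(T-\lambda)^2$; your explicit appeal to Cayley--Hamilton is the step the paper leaves implicit, and your added remarks (that $t/2$ is a rational number with integral square, hence an integer, and that the identification of $\chi_{\varphi_E}$ with the Weil polynomial is an input the lemma presupposes) are sound refinements of points the paper takes for granted.
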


\begin{proof}
(1) If $\varphi_E=\lambda\id$, then its characteristic polynomial is
\[
\chi_{\varphi_E}(T)=\det(T-\lambda\,\id)=(T-\lambda)^2=T^2-2\lambda T+\lambda^2.
\]
Comparing with $T^2-tT+q$ gives $t=2\lambda$ and $q=\lambda^2$, hence $t^2=4q$ and $\lambda=t/2=\pm\sqrt{q}$.
Since $t\in\Z$, this forces $\sqrt{q}\in\Q$, i.e. $q$ is a square.

(2) If $t^2=4q$, then $\chi_{\varphi_E}(T)=(T-\lambda)^2$ with $\lambda=t/2=\pm\sqrt{q}$, hence
$\varphi_E$ has a single eigenvalue $\lambda$ of multiplicity $2$. Over a field, any linear operator whose
characteristic polynomial is $(T-\lambda)^2$ can be written as $\lambda\id+N$ with $N^2=0$ (take $N:=\varphi_E-\lambda\id$),
and $N=0$ holds if and only if the operator is scalar. The last assertion follows.
\end{proof}
\smallskip
\noindent\underline{Supersingular case.}
If $E$ is supersingular, then $D_{E,K_0}$ is isoclinic of slope $1/2$.
In particular, there is no slope-$0$ quotient, so any Frobenius-equivariant map to $K_0$ must vanish:
\begin{equation}\label{eq:lambda-ss}
\Hom_{K_0, \Fil}(D_{E,K_0},K_0)^{\Phi=\id}=\{0\},
\qquad\text{hence }\lambda=0.
\end{equation}
Assuming $\varphi_E$ is not scalar (for instance, this is the case when $q$ is not a square; see \cref{lem:phiE-scalar}). Then $Z(\varphi_E)=K_0[\varphi_E]=\{xI+y\varphi_E\}$.
The filtration constraint depends on whether the Hodge line $\ell$ is $\varphi_E$-stable:
\[
K_0[\varphi_E]\cap \End_{K_0,\ell}(D_{E,K_0})
=
\begin{cases}
K_0[\varphi_E], & \text{if }\varphi_E(\ell)\subseteq \ell,\\
K_0\cdot I, & \text{if }\varphi_E(\ell)\nsubseteq \ell.
\end{cases}
\]
Thus, in the generic supersingular situation $\varphi_E(\ell)\nsubseteq \ell$, (for instance, this is the case when the characteristic polynomial of $\varphi_E$ is irreducible)
\[
\End_{\MF}(D_{M,K_0})\ \cong\ K_0\ \oplus\ K_0
\quad\text{(lattice scalars plus elliptic scalars).}
\]
\smallskip
\noindent
We summarize the results obtained in the above example for \( M = [\Z \to E] \) in the following corollary.

\begin{cor}
    Let $M=[\Z\xrightarrow{u} E]$ be a $1$-motive over $k$. Then $h\in \End_{\Mi(k)}(M)\otimes_{\Z}K_0$ has the following form:
\begin{equation*}
        h=\begin{cases}
            \begin{pmatrix}
                a & 0\\ 0 & K_0[\varphi_E]
            \end{pmatrix}, & \text{ if }E \text{ is ordinary}\\
            \begin{pmatrix}
                a & 0\\ 0 & b.I
            \end{pmatrix}, & \text{ if }E \text{ is supersingular and } \chi_{\varphi_E} \text{ is irreducible}\\
            \begin{pmatrix}
                a & 0\\ 0 & K_0[\varphi_E]
            \end{pmatrix}, & \text{ if }E \text{ is supersingular and } \chi_{\varphi_E} \text{ splits over } K_0 \text{ with two distinct roots}\\
            \begin{pmatrix}
                a & 0\\
                0 & B
            \end{pmatrix}, & \text{ if } \varphi_E \text{ is scalar.}
        \end{cases}
    \end{equation*}
    where $a,b\in K_0,\, K_0[\varphi_E]=\bigl\{xI+y\varphi_E\,\mid\, x,y\in K_0\bigr\},$ and $B\in \operatorname{UT}_{2}(K_0)$, the algebra of upper triangular matrices.
\end{cor}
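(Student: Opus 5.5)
The corollary just assembles the preceding computation for $M=[\Z\xrightarrow{u}E]$, so my plan is to run that computation case by case, using \cref{cor:frob-matrix} as the bridge. First, by \cref{cor:frob-matrix} (itself a consequence of \cref{prop:tate-level1} and \cref{thm:fully-faithful}), it suffices to describe $\End_{\MF}(D_{M,K_0})$. By \cref{rem:split-isogeny-finite-field}, the canonical sequence splits in $\Mi(k)\otimes\Q$. Hence, in a basis $(e_0,e_1,e_2)$ adapted to the weights and to $\Fil^1=\langle e_1\rangle$, we have $\varphi_M=\id\oplus\varphi_E$. Any $K_0$-linear $h$ preserving the weight filtration then has the block form $\begin{pmatrix} a&\lambda\\0&B\end{pmatrix}$, and the membership conditions reduce to \eqref{eq:split-frob-eq} and \eqref{eq:split-fil-eq}.

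Second, I will show $\lambda=0$ in every case. The condition $\lambda\circ\varphi_E=\lambda$ says that $1$ is an eigenvalue of the transpose of $\varphi_E$, hence a root of $T^2-tT+q$. This is impossible, since the roots are Weil numbers of absolute value $\sqrt q\neq 1$ (Hasse bound). Separately, in the supersingular case one can note the slope obstruction \eqref{eq:lambda-ss}. This leaves only $B\in Z(\varphi_E)$ with $B(\ell)\subseteq\ell$, where $\ell=\Fil^1 D_{E,K_0}$.

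Third, I will split into cases.
\begin{itemize}
\item \emph{$\varphi_E$ not scalar.} A $2\times 2$ non-scalar matrix is cyclic, so $Z(\varphi_E)=K_0[\varphi_E]$. The answer then depends only on whether $\varphi_E(\ell)\subseteq\ell$: if so, all of $K_0[\varphi_E]$ survives; if not, $xI+y\varphi_E$ preserves $\ell$ iff $y=0$.
\item \emph{Ordinary.} $\ell$ equals the slope-$1$ line, which is $\varphi_E$-stable, so we get $K_0[\varphi_E]$.
\item \emph{Supersingular with $\chi_{\varphi_E}$ irreducible over $K_0$.} $\varphi_E$ has no invariant line, so $\ell$ is not stable and only $bI$ survives.
\item \emph{Supersingular with two distinct roots in $K_0$.} The eigenlines are the only stable lines. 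I would argue that $\ell$ coincides with one of them: the slope decomposition over $K_0$ together with Frobenius compatibility of the Hodge line (via $F(\Fil^1)\subseteq pD$) should force this. This gives $K_0[\varphi_E]$.
\item \emph{$\varphi_E$ scalar.} The centralizer is everything, so the only condition is $B(\ell)\subseteq\ell$, i.e.\ $B\in\operatorname{UT}_2(K_0)$ in the $\Fil$-adapted basis.
\end{itemize}

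I expect the main obstacle to be the supersingular split case, i.e.\ showing that the Hodge line is $\varphi_E$-stable when $\chi_{\varphi_E}$ has distinct roots in $K_0$. Both roots then have valuation $f/2$, so slopes alone do not separate the eigenlines. My first attempt will use the integral structure. Since $\varphi_E$ commutes with the geometric Frobenius endomorphism $\sigma_q$, which acts as a semisimple element with eigenvalues in $K_0$, an eigenline decomposition of $D_E$ is defined over $\W$. Then $\Fil^1=\ker(F \bmod p)$ should be identified with one of the reduced eigenlines. If this fails, I would restrict the statement to the configuration in which $\ell$ is $\varphi_E$-stable, mirroring the dichotomy displayed before the corollary.
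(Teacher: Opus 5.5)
Your proposal follows the paper's own derivation (the computation preceding the corollary) almost step by step. Your uniform proof that $\lambda=0$ via the Weil bound is fine. The weak point is the mechanism you use to separate the supersingular cases: you treat the Hodge line $\ell\subset D_{E,K_0}$ as an independent constraint, and this has two consequences.

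\textbf{The split case does not close.} Grant that $\varphi_E$ has an integral eigenbasis $v,w$ and that $\omega\subset D_E/pD_E$ is one of the reduced eigenlines, say $\omega=\langle\bar v\rangle$ (both hold there). This still pins down $\ell$ only modulo $p$. The lines $K_0v$ and $K_0(v+pw)$ both reduce to $\omega$, and only the first is $\varphi_E$-stable. Worse, with the paper's definition $\Fil^1D_E$ is the full preimage of $\omega$, so it contains $pD_E$ and induces no line at all on $D_{E,K_0}$. The paper gives no argument for this case either; it simply assumes $\varphi_E(\ell)\subseteq\ell$.

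\textbf{The bridge you invoke contradicts your irreducible case.} By \cref{cor:frob-matrix}, $\sigma_q$ goes to $\varphi_M=\mathrm{diag}(1,\varphi_E)$, which must therefore preserve $\ell$ in every case. That settles the ordinary and split cases at once. But an irreducible $\chi_{\varphi_E}$ admits no stable line, and $\{\mathrm{diag}(a,bI)\}$ does not contain $\varphi_M$. Concretely, $(\mathrm{id},\pi_E)$ is an endomorphism of $M$, because $u(1)\in E(\F_q)$ is Frobenius-fixed, and it acts on $D_E$ by a non-scalar operator.

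\textbf{A dimension count shows the statement itself is at fault.} Since $u$ is torsion, $M\cong[\Z\to 0]\oplus[0\to E]$ up to isogeny, so $\End(M)\otimes\Q\cong\Q\times\End^0(E)$. The map $\End(M)\otimes K_0\to\End_{K_0}(D_{M,K_0})$ is injective, by \cref{thm:fully-faithful} and Galois descent from $\Q_p$ to $K_0$. For supersingular $E$ with $\pi_E\notin\Q$ one has $\End^0(E)=\Q(\pi_E)$, so the image is $3$-dimensional, not $2$. An example is $E/\F_p$ with $p\ge 5$ and $\chi_{\varphi_E}=T^2+p$, which is irreducible over $\Q_p$. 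For scalar $\varphi_E$, $\End^0(E)$ is the quaternion algebra ramified at $p$ and $\infty$, so the $E$-block must be $4$-dimensional rather than $\operatorname{UT}_2(K_0)$.

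\textbf{What the correct answer is.} The image lies in the centralizer of $\varphi_M$. Since $1$ is not an eigenvalue of $\varphi_E$, that centralizer is $\{\mathrm{diag}(a,B): B\in Z(\varphi_E)\}$, which has exactly these dimensions ($1+2$ or $1+4$). Hence the image is the whole centralizer, and no filtration condition survives after inverting $p$. This gives $B\in K_0[\varphi_E]$ whenever $\varphi_E$ is not scalar (ordinary and both supersingular subcases), and $B\in M_2(K_0)$ arbitrary when it is scalar.

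So your plan, and the paper's computation, can be repaired to prove the first and third cases. The fallback you mention, restricting to a stable $\ell$, is unnecessary there. The second and fourth cases cannot be proved as stated because they are false.
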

\smallskip
\section{The 1-motivic thick subcategory of \texorpdfstring{$\DM(k)$}{DM(k)} and extension of the Barsotti--Tate crystal functor}

Throughout this section, assume $k=\F_q$ is a finite field of characteristic $p$, put
$\W:=\W(k)$ and $K_0:=\Frac(\W)$, and work with $\Q$-coefficients.

\subsection{The thick triangulated subcategory generated by \texorpdfstring{$1$}{1}-motives}

Let $\DM^{\eff}_{gm,\et}(k;\Q)$ denote Voevodsky's triangulated category of effective geometric motives
with $\Q$-coefficients and $\et$-topology.
Define the \emph{$1$-motivic thick subcategory}
\[
\DM^{\eff}_{\le 1}(k;\Q)\ :=\ \thick\bigl\langle M(C)\ \bigm|\ C/k \text{ smooth curve}\bigr\rangle
\ \subset\ \DM^{\eff}_{gm,\et}(k;\Q),
\]
the smallest thick triangulated subcategory containing the motives of smooth curves.

Let $\Mi(k)$ be Deligne's category of $1$-motives over $k$ (placed in degrees $-1,0$ as usual),
and let $D^b(\Mi(k))$ be its bounded derived category for the standard exact structure.
Barbieri-Viale and Kahn construct a fully faithful triangulated functor
\[
\Tot:\ D^b(\Mi(k))[1/p]\ \longrightarrow\ \DM^{\eff}_{-,\et}(k),
\]
and show that its essential image is exactly the thick subcategory generated by motives of smooth curves
(\cite[Thm.~2.1.2]{BarbieriVialeKahn2016}).
After tensoring with $\Q$, we obtain an equivalence of triangulated categories
\begin{equation}\label{eq:Tot-equivalence-sec5}
\Tot:\ D^b(\Mi(k)\otimes\Q)\ \xrightarrow{\ \sim\ }\ \DM^{\eff}_{\le 1}(k;\Q).
\end{equation}
\smallskip
\subsection{Linearized Barsotti--Tate realization}

\begin{remark}
We emphasize that we do \emph{not} use here the usual category of \(F\)-isocrystals, whose Frobenius is
\(\sigma\)-semilinear. Instead, after inverting \(p\), we pass to the \(K_0\)-linear operator
$
\varphi=F^f,
$
which is the form naturally compatible with the linearized full-faithfulness results proved earlier and with the
matrix computations in \cref{sec:Application}.
\end{remark}
\begin{defn}
Define the \emph{$K_0$-linear Barsotti--Tate realization} $\BT: (\Mi(k)\otimes\Q)^{op}\longrightarrow \Phi\text{-}\Vect_{K_0}^{\Fil}$ by 
\[
\BT(M):=\bigl(\Tcrysve(M)\otimes_{\W(k)}K_0,\ \Fil^\bullet,\ \varphi_M:=F_M^f\bigr)
\in \Phi\text{-}\Vect_{K_0}^{\Fil},
\]
where \(k=\F_q\) and \(q=p^f\).    
\end{defn}
By \cref{prop:tate-level1}, the natural map
\begin{equation}\label{prop:BT-heart-sec5}
\Hom_{\Mi(k)}(M,M')\otimes_{\W(k)} K_0
\ \xrightarrow{\ \sim\ }\
\Hom_{\Phi\text{-}\Vect_{K_0}^{\Fil}}\bigl(\BT(M'),\BT(M)\bigr)
\end{equation}
is an isomorphism, and is functorial in $(M,M')$. 

\begin{cor}\label{cor:Db-splits-sec5}
Every object of \(D^b(\Mi(k)\otimes\Q)\) is noncanonically isomorphic to a finite direct sum of shifts of objects of
\(\Mi(k)\otimes\Q\). Moreover, for any \(M,M'\in \Mi(k)\otimes\Q\),
\[
\Hom_{D^b(\Mi(k)\otimes\Q)}(M,M'[n])=0
\qquad\text{for all }n\neq 0.
\]
\end{cor}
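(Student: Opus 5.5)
The plan is to derive both assertions from \cref{lem:semisimple-M1-sec5}, which says that \(\mathcal A:=\Mi(k)\otimes\Q\) is semisimple. The argument will be general homological algebra for semisimple abelian categories; no further geometric input is needed. We will use that \(\mathcal A\) is abelian, which is standard for the isogeny category of \(1\)-motives: it is the category in which the derived category of \cite{BarbieriVialeKahn2016} is formed. Semisimplicity in the sense of \cref{lem:semisimple-M1-sec5} means every short exact sequence splits. Hence every object of \(\mathcal A\) is both projective and injective.

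\textbf{Vanishing of higher Hom.} For \(n<0\), \(\Hom_{D^b(\mathcal A)}(M,M'[n])=0\) holds for any abelian category, since \(M\) and \(M'[n]\) lie in the disjoint parts of the standard t-structure. For \(n\ge 1\), we will identify \(\Hom_{D^b(\mathcal A)}(M,M'[n])\) with the Yoneda group \(\Ext^n_{\mathcal A}(M,M')\); this identification holds in any abelian category. A Yoneda \(n\)-extension \(0\to M'\to E_{n-1}\to\cdots\to E_0\to M\to 0\) can be split into \(n\) short exact sequences, and the class is the Yoneda product of their classes. Each of these lies in an \(\Ext^1\), which vanishes by semisimplicity. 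So the product is zero. Alternatively, \(M'\) is injective, so \(\Ext^n(M,M')\) computed by the trivial resolution \(M'\to M'\) is zero for \(n\ge1\).

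\textbf{Splitting of complexes.} Let \(X\in D^b(\mathcal A)\) be represented by a bounded complex \(X^\bullet\). In each degree we have exact sequences \(0\to Z^i\to X^i\to B^{i+1}\to 0\) and \(0\to B^i\to Z^i\to H^i\to 0\). By semisimplicity both split, so \(X^i\cong B^i\oplus H^i\oplus B^{i+1}\), and the differential is the identity from the summand \(B^{i+1}\) of \(X^i\) onto the summand \(B^{i+1}\subset X^{i+1}\). Thus \(X^\bullet\) decomposes as a direct sum of the complex \(\bigoplus_i H^i[-i]\) with zero differential and the contractible complexes \([B^{i+1}\xrightarrow{\id}B^{i+1}]\). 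This gives \(X\cong\bigoplus_i H^i(X)[-i]\) in \(D^b(\mathcal A)\), which is a finite sum since \(X\) is bounded. The isomorphism depends on the chosen splittings, hence is noncanonical.

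Alternatively, one can induct on the amplitude using the truncation triangle \(\tau_{\le m-1}X\to X\to H^m(X)[-m]\to\). Its connecting morphism lies in \(\Hom(H^m[-m],\tau_{\le m-1}X[1])\). By the inductive hypothesis \(\tau_{\le m-1}X\) is a sum of shifts \(H^i[-i]\) with \(i\le m-1\), so this Hom group is a sum of \(\Ext^{m-i+1}\) with \(m-i+1\ge2\). These vanish by the first step, so the triangle splits.

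\textbf{Main obstacle.} The only delicate point is making sure the semisimplicity of \cref{lem:semisimple-M1-sec5} is used in the abelian category \(\mathcal A\) itself, so that the splittings of \(Z^i\hookrightarrow X^i\) and \(B^i\hookrightarrow Z^i\) exist in \(\mathcal A\). Once that is granted, the rest is formal.
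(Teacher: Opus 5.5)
Your proposal is correct and takes essentially the same approach as the paper. Both deduce the statement from the semisimplicity of $\Mi(k)\otimes\Q$ given by \cref{lem:semisimple-M1-sec5}: every bounded complex splits as the direct sum of its shifted cohomology objects, and all $\Ext^n$ with $n\ge 1$ vanish, while $n<0$ is automatic. Your version supplies the standard details that the paper only calls ``standard'', namely the explicit decomposition $X^i\cong B^i\oplus H^i\oplus B^{i+1}$ and the Yoneda/injectivity argument.
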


\begin{proof}
By \cref{lem:semisimple-M1-sec5}, we know that the isogeny category $\Mi(k)\otimes\Q$ is semisimple. This conclusion is standard for the bounded derived category of a semisimple abelian category.
Indeed, every bounded complex is quasi-isomorphic to the direct sum of its cohomology objects placed in the corresponding degrees,
and all higher \(\Ext\)-groups vanish.
\end{proof}

Let \(\mathcal S_{\BT}\) be the full additive \(K_0\)-linear subcategory of \(\Phi\text{-}\Vect_{K_0}^{\Fil}\) consisting of objects isomorphic to \(\BT(M)\) for some \(M\in \Mi(k)\otimes\Q\).

\begin{cor}\label{lem:SBT-semisimple}
The category \(\mathcal S_{\BT}\) is a semisimple \(K_0\)-linear category. More precisely, via
\eqref{prop:BT-heart-sec5}, the contravariant functor
\[
\BT:\ (\Mi(k)\otimes\Q)^{op}\longrightarrow \mathcal S_{\BT}
\]
becomes fully faithful after tensoring Hom-groups with \(K_0\), and \(\mathcal S_{\BT}\) is anti-equivalent to the
\(K_0\)-linearization of \(\Mi(k)\otimes\Q\).
\end{cor}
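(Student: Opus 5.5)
The plan is to deduce the statement formally from \cref{lem:semisimple-M1-sec5} and the full faithfulness \eqref{prop:BT-heart-sec5}, using the usual transport of semisimplicity along a fully faithful functor into a category where idempotents split.

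\textbf{Step 1: the $K_0$-linearization is semisimple.} Write $\mathcal M_{K_0}$ for the $K_0$-linearization of $\Mi(k)\otimes\Q$: same objects, morphisms $\Hom_{\Mi(k)}(M,M')\otimes_{\Z}K_0$. Since $\Mi(k)\otimes\Q$ is semisimple (\cref{lem:semisimple-M1-sec5}), every object is a finite direct sum of simple objects. Its endomorphism algebras are finite-dimensional over $\Q$, because $\Hom$-groups of $1$-motives are finitely generated. Hence $\End(M)\otimes\Q$ is a finite-dimensional semisimple $\Q$-algebra. Since $K_0$ has characteristic $0$, the $\Q$-algebra is separable, so $\End(M)\otimes_{\Q}K_0$ is again semisimple.

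A $K_0$-linear additive category whose objects have semisimple endomorphism algebras, and which is made Karoubian, is semisimple. So $\mathcal M_{K_0}$, or its idempotent completion, is semisimple.

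\textbf{Step 2: transport along $\BT$.} By \eqref{prop:BT-heart-sec5}, $\BT$ induces isomorphisms
\[
\Hom_{\mathcal M_{K_0}}(M,M')\xrightarrow{\sim}\Hom_{\mathcal S_{\BT}}\bigl(\BT(M'),\BT(M)\bigr),
\]
compatible with composition. By definition of $\mathcal S_{\BT}$, the functor is essentially surjective. Hence $\BT$ is an anti-equivalence $\mathcal M_{K_0}^{op}\simeq\mathcal S_{\BT}$, which proves the second assertion.

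\textbf{Step 3: semisimplicity of $\mathcal S_{\BT}$.} Semisimplicity in the sense of semisimple endomorphism algebras plus splitting of idempotents passes through the anti-equivalence. Idempotents do split in $\mathcal S_{\BT}$. An idempotent $e$ of $\BT(M)$ has a kernel and an image in the ambient abelian-type category $\Phi\text{-}\Vect_{K_0}^{\Fil}$, namely strict subobjects with the induced filtration, since $e$ is filtration-compatible and idempotent. Moreover $\BT(M)=\operatorname{im}e\oplus\ker e$ there.

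\textbf{Main obstacle.} The delicate point is showing that $\operatorname{im}e$ lies in $\mathcal S_{\BT}$. Only $K_0$-linear combinations of motivic idempotents are available, not motivic idempotents themselves, so $e$ need not come from an idempotent of $\End(M)\otimes\Q$.

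The first thing I would try is to decompose $\End(M)\otimes\Q=\prod_i M_{n_i}(\Delta_i)$ via Wedderburn. After $\otimes K_0$, each factor $\Delta_i\otimes K_0$ is a product of matrix algebras, and $e$ is conjugate to a sum of primitive idempotents there. A primitive idempotent of $M_{n}(\Delta)\otimes K_0$ is, up to multiplicity, a summand of the image of a simple motive $N$. So $\operatorname{im}e$ is a direct summand of $\BT(N)^{\oplus m}$.

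If this still does not land in $\mathcal S_{\BT}$, I would fall back on interpreting $\mathcal S_{\BT}$ as the Karoubian closure. Equivalently, I would state semisimplicity as: every object is a finite direct sum of objects with division endomorphism rings after idempotent completion. This is the reading compatible with ``anti-equivalent to the $K_0$-linearization''.
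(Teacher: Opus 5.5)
\paragraph{Steps 1 and 2 match the paper.} Your Steps 1 and 2 are exactly the argument the paper intends. The paper gives no proof beyond ``via \eqref{prop:BT-heart-sec5}'' together with \cref{lem:semisimple-M1-sec5}. Full faithfulness plus the definition of $\mathcal S_{\BT}$ as an essential image gives the anti-equivalence with $\mathcal M_{K_0}$. The algebras $\End(M)\otimes_{\Z}K_0$ are semisimple because $\End(M)\otimes\Q$ is a separable $\Q$-algebra. So, granting \eqref{prop:BT-heart-sec5}, the second assertion is fully proved.

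\paragraph{The gap in Step 3 cannot be closed.} Your sentence ``Idempotents do split in $\mathcal S_{\BT}$'' is false. Idempotents split in the ambient $\MF$, but the pieces need not lie in $\mathcal S_{\BT}$. So $\mathcal S_{\BT}$ is not semisimple in the abelian (Karoubian) sense. Here is a counterexample.
\begin{itemize}
\item Take $E/\F_q$ ordinary. Then $\End(E)\otimes\Q=\Q(\pi)$ is imaginary quadratic with $p$ split, so $\End(E)\otimes_{\Z}K_0\cong K_0\times K_0$ contains an idempotent $e\neq 0,1$.
\item The image of $\BT(e)$, with the induced filtration, is a $1$-dimensional $\varphi_E$-eigenline in $\BT(E)$.
\item Suppose this line were isomorphic to some $\BT(N)$. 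Since $\dim_{K_0}\BT(N)=\rk L+2\dim A+\dim T$, $N$ would be, up to isogeny, a rank-one lattice or a one-dimensional torus.
\item Then the inclusion of the line into $\BT(E)$ would be a nonzero element of $\Hom_{\mathcal S_{\BT}}(\BT(N),\BT(E))\cong\Hom(E,N)\otimes_{\Z}K_0=0$, a contradiction.
\end{itemize}
Hence neither $\mathcal S_{\BT}$ nor $\mathcal M_{K_0}$ is idempotent complete. In your Step 1, only the idempotent completion is semisimple.

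\paragraph{Why the Wedderburn attempt fails, and what is actually true.} The Wedderburn attempt is circular. Showing that $\operatorname{im}e$ is a summand of some $\BT(N)^{\oplus m}$ gives nothing new: the missing implication ``a summand of an object of $\mathcal S_{\BT}$ lies in $\mathcal S_{\BT}$'' is exactly the Karoubi property at issue. $K_0$-linearization creates idempotents that no $1$-motive up to isogeny realizes.

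Your fallback is therefore not a weakness of the proposal. It is the needed correction of the statement, which the paper's implicit argument overlooks. What is true is:
\begin{itemize}
\item $\mathcal S_{\BT}$ is anti-equivalent to $\mathcal M_{K_0}$;
\item all its endomorphism algebras are finite-dimensional semisimple $K_0$-algebras;
\item its Karoubian envelope is semisimple abelian.
\end{itemize}

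\paragraph{Downstream consequence.} \cref{cor:Kb-SBT} (``direct sum of its cohomology objects'') tacitly uses the strong reading. \cref{thm:BT-thick-ff} does not need it. Your Step 2 already gives $K^b(\mathcal M_{K_0})^{op}\simeq K^b(\mathcal S_{\BT})$. For complexes defined over $\Mi(k)\otimes\Q$, the Hom-groups in $K^b(\mathcal M_{K_0})$ are those of $K^b(\Mi(k)\otimes\Q)\simeq D^b(\Mi(k)\otimes\Q)$ tensored with $K_0$, since taking $H^0$ of the Hom complex commutes with $\otimes_{\Q}K_0$.
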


Let \(K^b(\mathcal S_{\BT})\) denote the bounded homotopy category of complexes in \(\mathcal S_{\BT}\). Since \(\mathcal S_{\BT}\) is a semisimple additive category, every bounded complex is homotopy equivalent to the direct sum of its cohomology objects placed in the corresponding degrees. Hence there are no nonzero morphisms between objects in distinct shifts.
\begin{cor}\label{cor:Kb-SBT}
Every object of \(K^b(\mathcal S_{\BT})\) is isomorphic to a finite direct sum of shifts of objects of \(\mathcal S_{\BT}\).
Moreover, for any \(E,E'\in \mathcal S_{\BT}\),
\[
\Hom_{K^b(\mathcal S_{\BT})}(E,E'[n])=0
\qquad\text{for all }n\neq 0.
\]
\end{cor}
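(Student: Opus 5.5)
The plan is to reduce \cref{cor:Kb-SBT} to a purely formal statement about bounded complexes over a semisimple additive category, and to apply it to $\mathcal S_{\BT}$ using \cref{lem:SBT-semisimple}.

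\emph{Step 1: $\mathcal S_{\BT}$ is idempotent complete and abelian semisimple.} By \cref{lem:SBT-semisimple}, $\mathcal S_{\BT}$ is anti-equivalent to the $K_0$-linearization of $\Mi(k)\otimes\Q$. By \cref{lem:semisimple-M1-sec5}, the latter is semisimple. The point to check is that semisimplicity survives the scalar extension $\Q\to K_0$ on Hom-spaces. I would argue this as follows. Every object of $\Mi(k)\otimes\Q$ is a finite direct sum of simple objects, because Hom-spaces are finite-dimensional over $\Q$. Each simple object $S$ has $\End(S)$ a division algebra $\Delta_S$. After $\otimes K_0$, the endomorphism algebra $\Delta_S\otimes_\Q K_0$ is still finite-dimensional semisimple, since $K_0/\Q$ has characteristic $0$ and is therefore separable. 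Consequently, every object of $\mathcal S_{\BT}$ has a semisimple endomorphism algebra. Every idempotent then splits inside the ambient abelian category $\Phi\text{-}\Vect_{K_0}^{\Fil}$, and I would verify that the images are again in $\mathcal S_{\BT}$, replacing $\mathcal S_{\BT}$ by its idempotent completion if necessary. Together with Hom-finiteness, this gives that every morphism $d:E\to E'$ admits a splitting
\[
E\cong \ker d\oplus C,\qquad E'\cong C\oplus \coker d,
\]
where $d$ restricts to an isomorphism on $C$.

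\emph{Step 2: splitting a bounded complex.} Given a bounded complex $(E^\bullet,d)$, I would induct on its length, starting from its top degree $n$. Using Step 1, write $E^{n-1}=Z\oplus C$ with $d|_C:C\xrightarrow{\sim}\operatorname{im}$, and $E^n=\operatorname{im}\oplus H$. The subcomplex $[C\xrightarrow{\sim}\operatorname{im}]$ is contractible and splits off as a direct summand. The complex therefore decomposes as a contractible piece, plus $H[-n]$, plus a shorter complex. Iterating gives a homotopy equivalence
\[
E^\bullet\simeq \bigoplus_i H^i[-i],
\]
which yields the first assertion.

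\emph{Step 3: the vanishing.} For $E,E'\in\mathcal S_{\BT}$ viewed as complexes concentrated in degree $0$, a chain map $E\to E'[n]$ has only one possibly nonzero component, namely the one in degree $0$, and this is nonzero only if $n=0$. Hence $\Hom_{K^b}(E,E'[n])=0$ for $n\neq0$, already at the level of chain maps.

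The main obstacle is Step 1. One must justify that $\mathcal S_{\BT}$, which is defined as a full subcategory of essential-image type, has kernels and images, that is, that it is idempotent complete, rather than merely having semisimple endomorphism rings. I would handle this by passing to the Karoubi envelope, which leaves $K^b$ unchanged up to summands. Alternatively, I would use that $\Mi(k)\otimes\Q$ is abelian and that \eqref{prop:BT-heart-sec5} transports direct sum decompositions.
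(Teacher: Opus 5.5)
\textbf{Overall.} Your route is the paper's own. The paper's whole argument is one sentence: $\mathcal S_{\BT}$ is a semisimple additive category, so bounded complexes split into their shifted cohomology. Your Steps 1--2 spell this out. Your Step 3 is correct and cleaner than the paper's deduction: for $E,E'$ in degree $0$ and $n\neq 0$ there are no nonzero chain maps $E\to E'[n]$ in $K^b$ of any additive category.

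\textbf{The gap.} It is the one you flag in Step 1 (``verify that the images are again in $\mathcal S_{\BT}$''), and it cannot be closed. $\mathcal S_{\BT}$, consisting of objects isomorphic to some $\BT(M)$, is not closed under direct summands, and the first assertion fails for it. Here is a counterexample.
\begin{itemize}
\item Take $k=\F_p$, so $\varphi=F$ and $K_0=\Q_p$, and let $E/\F_p$ be ordinary with trace of Frobenius $t$, $p\nmid t$. Then $T^2-tT+p$ has distinct roots $u\in\Z_p^\times$ and $p/u$. We get $\BT(E)=D_0\oplus D_1$ with $\varphi|_{D_0}=u$, $\varphi|_{D_1}=p/u$ and $\Fil^1=D_1$.
\item The projector $e$ onto $D_1$ along $D_0$ is the realization of the idempotent of $E[p^\infty]\cong E[p^\infty]^{\mathrm{et}}\times E[p^\infty]^{\mathrm{mult}}$. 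So it commutes with $\varphi$ and preserves $\Fil^\bullet$, hence lies in $\End_{\mathcal S_{\BT}}(\BT(E))$ by fullness.
\item Let $X=[\BT(E)\xrightarrow{e}\BT(E)]$ in degrees $0,1$. Suppose $X\cong\bigoplus_i S_i[-i]$ in $K^b(\mathcal S_{\BT})$ with $S_i\in\mathcal S_{\BT}$. Apply the additive forgetful functor to $K_0$-vector spaces with an automorphism; this is an abelian category, where homotopy equivalences become quasi-isomorphisms. Then $S_0\cong H^0(X)=\ker e=D_0$, a line on which $\varphi=u$.
\item But a one-dimensional $\BT(M)$ comes from a rank-one lattice or a rank-one torus, where $\varphi=\pm1$ or $\pm p$. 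Here $u\neq\pm p$ because $u$ is a unit, and $u=\pm1$ would force $t=\pm(p+1)$, violating the Hasse bound.
\end{itemize}

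\textbf{Why your repairs do not help.}
\begin{itemize}
\item Let $\widetilde{\mathcal S}$ be the closure of $\mathcal S_{\BT}$ under direct summands in $\MF$; idempotents split there, although $\MF$ is not abelian. Granting \eqref{prop:BT-heart-sec5}, your Steps 1--2 work in $\widetilde{\mathcal S}$. But they only produce $X\simeq\bigoplus_i H^i(X)[-i]$ with $H^i(X)\in\widetilde{\mathcal S}$, and here $H^0(X)=D_0\notin\mathcal S_{\BT}$. ``Unchanged up to summands'' is exactly what the statement cannot afford.
\item \eqref{prop:BT-heart-sec5} cannot transport decompositions from $\Mi(k)\otimes\Q$, because $\End(M)\otimes_\Q K_0$ acquires idempotents with no motivic source. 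In the example, $\End(E)\otimes\Q=\Q(\pi)$ is a field, while $\Q(\pi)\otimes_\Q\Q_p\cong\Q_p\times\Q_p$.
\end{itemize}
So semisimplicity of the individual endomorphism algebras, which you do establish, does not make $\mathcal S_{\BT}$ abelian.

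\textbf{Relation to the paper.} The paper's one-line proof rests on the assertion in \cref{lem:SBT-semisimple} that $\mathcal S_{\BT}$ is semisimple, and has the same defect. What your argument does prove is the corollary with $\mathcal S_{\BT}$ replaced by $\widetilde{\mathcal S}$. Alternatively, it proves the first assertion restricted to complexes in the image of $\BT$, which are already split by \cref{cor:Db-splits-sec5}; this is all that \cref{thm:BT-thick-ff} actually uses.
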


\smallskip
\subsection{Extending the Barsotti--Tate crystal functor to \texorpdfstring{$\DM^{\eff}_{\le 1}(k;\Q)$}{DM{eff}{<=1}}}
Because \(\BT\) is additive and exact on \(\Mi(k)\otimes\Q\), it extends termwise to a contravariant functor on bounded complexes:
\[
\BT:\ \bigl(C^b(\Mi(k)\otimes\Q)\bigr)^{op}\longrightarrow C^b(\mathcal S_{\BT}).
\]
Passing to homotopy categories yields a contravariant triangulated functor
\[
\BT:\ \bigl(K^b(\Mi(k)\otimes\Q)\bigr)^{op}\longrightarrow K^b(\mathcal S_{\BT}).
\]
Since \(\Mi(k)\otimes\Q\) is semisimple, the canonical functor
\[
K^b(\Mi(k)\otimes\Q)\xrightarrow{\sim}D^b(\Mi(k)\otimes\Q)
\]
is an equivalence. Hence we obtain a contravariant triangulated functor
\begin{equation}\label{eq:BT-derived-sec5}
\BT:\ \bigl(D^b(\Mi(k)\otimes\Q)\bigr)^{op}\longrightarrow K^b(\mathcal S_{\BT}).
\end{equation}

Transporting this along the equivalence \eqref{eq:Tot-equivalence-sec5} gives a contravariant triangulated functor
\[
\BT_{\mot}:\ \bigl(\DM^{\eff}_{\le 1}(k;\Q)\bigr)^{op}\longrightarrow K^b(\mathcal S_{\BT}),
\qquad
\BT_{\mot}:=\BT\circ \Tot^{-1}.
\]
We can now state and prove the main result of this section.

\begin{thm}[Full faithfulness on the \(1\)-motivic thick subcategory]\label{thm:BT-thick-ff}
For any \(X,Y\in \DM^{\eff}_{\le 1}(k;\Q)\), the functor \(\BT_{\mot}\) induces a natural isomorphism
\[
\Hom_{\DM^{\eff}_{\le 1}(k;\Q)}(X,Y)\otimes_{\Q}K_0
\;\xrightarrow{\ \sim\ }\;
\Hom_{K^b(\mathcal S_{\BT})}\bigl(\BT_{\mot}(Y),\BT_{\mot}(X)\bigr).
\]
Equivalently, \(\BT_{\mot}\) is fully faithful after \(K_0\)-linearization.
\end{thm}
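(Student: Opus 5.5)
Via the equivalence \eqref{eq:Tot-equivalence-sec5} I would replace $X,Y$ by objects of $D^b(\Mi(k)\otimes\Q)$, since $\BT_{\mot}=\BT\circ\Tot^{-1}$ and $\Tot$ is fully faithful. It therefore suffices to show that for $X,Y\in D^b(\Mi(k)\otimes\Q)$ the map
\[
\Hom_{D^b(\Mi(k)\otimes\Q)}(X,Y)\otimes_{\Q}K_0\longrightarrow \Hom_{K^b(\mathcal S_{\BT})}\bigl(\BT(Y),\BT(X)\bigr)
\]
induced by \eqref{eq:BT-derived-sec5} is an isomorphism. Naturality in $X$ and $Y$ is automatic, since $\BT$ is a triangulated functor.

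\textbf{Reduction to shifts of single motives.} By \cref{cor:Db-splits-sec5}, I would write $X\cong\bigoplus_i M_i[a_i]$ and $Y\cong\bigoplus_j M'_j[b_j]$ with $M_i,M'_j\in\Mi(k)\otimes\Q$. Both sides are additive in each variable, and tensoring with $K_0$ commutes with finite direct sums. Moreover $\BT$ is triangulated and contravariant, so $\BT(M[a])\cong\BT(M)[-a]$. Hence it is enough to treat $X=M[a]$ and $Y=M'[b]$, which after shifting means comparing
\[
\Hom(M,M'[n])\otimes K_0 \quad\text{with}\quad \Hom_{K^b(\mathcal S_{\BT})}\bigl(\BT(M'),\BT(M)[n]\bigr),\qquad n=b-a.
\]
I would check the sign convention for $n$ under contravariance carefully, but it does not affect the argument.

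\textbf{Case $n\neq 0$.} The left side vanishes by \cref{cor:Db-splits-sec5}. The right side vanishes by \cref{cor:Kb-SBT}, because $\BT(M)$ and $\BT(M')$ are objects of $\mathcal S_{\BT}$ placed in degree $0$.

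\textbf{Case $n=0$.} Since both objects sit in degree $0$, morphisms in $K^b$ are simply morphisms in $\mathcal S_{\BT}$, which is a full subcategory of $\Phi\text{-}\Vect^{\Fil}_{K_0}$. On the other side, $\Hom_{D^b}(M,M')=\Hom_{\Mi(k)\otimes\Q}(M,M')=\Hom_{\Mi(k)}(M,M')\otimes\Q$. The map then becomes the isomorphism \eqref{prop:BT-heart-sec5}, i.e.\ \cref{prop:tate-level1}, using
\[
(\Hom\otimes\Q)\otimes_{\Q}K_0=\Hom\otimes_{\Z}K_0 .
\]

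\textbf{Main obstacle.} The delicate point is showing that the map obtained by reassembling the direct sums really is the one induced by $\BT_{\mot}$, and not merely an abstract isomorphism. The splittings in \cref{cor:Db-splits-sec5} are noncanonical, so I would argue that for any chosen decomposition the comparison map is natural. It then transports through the isomorphisms $X\cong\bigoplus M_i[a_i]$, because $\BT$ sends these isomorphisms to isomorphisms, and the resulting matrix of components is exactly the one computed in the two cases above.

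A secondary point is that \cref{cor:Kb-SBT} relies on $\mathcal S_{\BT}$ being semisimple, which is \cref{lem:SBT-semisimple}. Its $K_0$-linearization subtlety means the vanishing for $n\ne0$ should be read in the $K_0$-linear hom-spaces, and I would state explicitly that that is the sense in which it is used.
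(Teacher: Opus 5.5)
Your proposal is correct and follows the paper's proof essentially step for step. You transport along $\Tot$, write objects of $D^b(\Mi(k)\otimes\Q)$ as finite sums of shifts via \cref{cor:Db-splits-sec5}, kill the components with $n\neq 0$ using \cref{cor:Db-splits-sec5} and \cref{cor:Kb-SBT}, and identify the $n=0$ components with \eqref{prop:BT-heart-sec5} (correctly read with $\otimes_{\Z}K_0$, not the $\otimes_{\W(k)}$ printed there). Your ``secondary point'' is not a real issue: for $E,E'$ concentrated in degree $0$, $\Hom_{K^b(\mathcal S_{\BT})}(E,E'[n])=0$ for $n\neq 0$ holds in the homotopy category of any additive category, so semisimplicity is only needed on the motivic side, to kill $\Hom(M,M'[n])$ for $n>0$.
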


\begin{proof}
By the equivalence \eqref{eq:Tot-equivalence-sec5}, it is enough to prove the corresponding statement for the functor
\eqref{eq:BT-derived-sec5} on \(D^b(\Mi(k)\otimes\Q)\).

By \cref{cor:Db-splits-sec5}, every object of \(D^b(\Mi(k)\otimes\Q)\) is a finite direct sum of shifts of objects of \(\Mi(k)\otimes\Q\),
and for \(M,M'\in \Mi(k)\otimes\Q\),
\[
\Hom_{D^b(\Mi(k)\otimes\Q)}(M,M'[n])=0
\qquad(n\neq 0).
\]
Similarly, by \cref{cor:Kb-SBT}, every object of \(K^b(\mathcal S_{\BT})\) is a finite direct sum of shifts of objects of \(\mathcal S_{\BT}\),
and for \(E,E'\in \mathcal S_{\BT}\),
\[
\Hom_{K^b(\mathcal S_{\BT})}(E,E'[n])=0
\qquad(n\neq 0).
\]

Therefore it suffices to check the claimed isomorphism on generators concentrated in degree \(0\). Let \(M,M'\in \Mi(k)\otimes\Q\).
Then
\[
\Hom_{D^b(\Mi(k)\otimes\Q)}(M,M')
=
\Hom_{\Mi(k)\otimes\Q}(M,M'),
\]
and
\[
\Hom_{K^b(\mathcal S_{\BT})}\bigl(\BT(M'),\BT(M)\bigr)
=
\Hom_{\mathcal S_{\BT}}\bigl(\BT(M'),\BT(M)\bigr).
\]
Hence the desired isomorphism is exactly the one proved in \cref{prop:BT-heart-sec5}:
\[
\Hom_{\Mi(k)\otimes\Q}(M,M')\otimes_{\Q}K_0
\;\xrightarrow{\ \sim\ }\;
\Hom_{\mathcal S_{\BT}}\bigl(\BT(M'),\BT(M)\bigr).
\]

By additivity, the same conclusion holds for finite direct sums of shifts, and therefore for all objects of
\(D^b(\Mi(k)\otimes\Q)\). Transporting this statement along \(\Tot\) proves the theorem.
\end{proof}
\begin{remark}
\cref{thm:BT-thick-ff} also provides the \(1\)-motivic analogue of Tate's isogeny criterion for abelian varieties over finite fields. It shows that, within the \(1\)-motivic range, the Barsotti--Tate crystalline realization determines not only motivic morphisms but already the isogeny class of the underlying object. In particular, the mixed filtered \(\varphi\)-module \(\BT(M)\) is a complete isogeny invariant of \(M\).
\end{remark}

\begin{remark}
The target \(K^b(\mathcal S_{\BT})\) is the natural derived target for the linearized Barsotti--Tate realization in the \(1\)-motivic range.
We do not pass here to the derived category of the ambient category \(\Phi\text{-}\Vect_{K_0}^{\Fil}\), since the proof only needs the
semisimple essential image \(\mathcal S_{\BT}\) of the linearized Barsotti--Tate realization, and this avoids controlling higher \(\Ext\)-groups in the ambient filtered category. For this reason, it is enough to work with \(K^b(\mathcal S_{\BT})\). Since \(\mathcal S_{\BT}\) is semisimple and is endowed with the split exact structure, the canonical functor
\[
K^b(\mathcal S_{\BT})\xrightarrow{\sim} D^b(\mathcal S_{\BT})
\]
is an equivalence. Hence all the preceding statements remain valid with \(D^b(\mathcal S_{\BT})\) in place of \(K^b(\mathcal S_{\BT})\).
\end{remark}

\begin{remark}
Our results identify Frobenius-compatible classes with morphisms in the \(1\)-motivic subcategory of \(\DM^{\eff}(k;\Q)\). In this sense, the conclusion is first of all motivic. Since \(\DM^{\eff}_{\le 1}(k;\Q)\) is the thick subcategory generated by motives of smooth curves, its morphisms are built from algebraic correspondences between associated with curves, Jacobians, tori, and lattices. Hence, in this range, the Frobenius-compatible classes identified above are algebraic in the sense of arising from algebraic correspondences. Thus, in level \(\le 1\), the motivic and algebraic descriptions coincide.
\end{remark}

\begin{remark}
For any \(X\in \DM^{\eff}_{\le 1}(k;\Q)\), \cref{thm:BT-thick-ff} gives
\[
\End_{\DM^{\eff}_{\le 1}(k;\Q)}(X)\otimes_{\Q}K_0
\;\cong\;
\End_{K^b(\mathcal S_{\BT})}\bigl(\BT_{\mot}(X)\bigr).
\]
In particular, if \(C/k\) is a smooth projective curve and \(h_1(C)\) denotes its degree-\(1\) motive, then
\(\BT_{\mot}(h_1(C))\) is represented by the linearized Barsotti--Tate realization of the Jacobian \(J_C\) (equivalently, \(H^1_{\crys}(C/\W)\)), namely
\[
\bigl(\Tcrysve(J_C)\otimes_{\W}K_0,\ \Fil^\bullet,\ \varphi_{J_C}=F_{J_C}^f\bigr).
\]
Thus the endomorphism algebra of \(h_1(C)\) is identified with the centralizer of Frobenius inside the \(K_0\)-endomorphism algebra of its Barsotti--Tate realization, subject to the filtration-preserving condition. More generally, if \(X/k\) is a variety and \(\Pic^{+}(X)\) denotes its cohomological Picard \(1\)-motive \cite{BS01}, then the same description applies to \(\Pic^{+}(X)\): its endomorphism algebra up to isogeny is recovered from the centralizer of the linearized Frobenius on
\[
H^1_{\crys}(X/\W)\otimes_{\W}K_0
\;\cong\;
\Tcrysve(\Pic^{+}(X))\otimes_{\W}K_0 
\]
together with the filtration-preserving condition. For the canonical isomorphism above, see \cite{AndreattaBarbieriViale2005}. In this sense, the theorem gives a uniform linear-algebraic description of endomorphisms not only for Jacobians of curves, but for cohomological Picard \(1\)-motives attached to arbitrary varieties.

This is the motivic analogue of the classical philosophy that endomorphisms are governed by Frobenius-invariants.
It gives an explicit linear-algebraic method to compute endomorphism algebras in the $1$-motivic range, as illustrated, for instance, in \cref{sec:Example M=ZtoE}.
\end{remark}


\bibliographystyle{alpha}
\bibliography{References}

\end{document}

%% file: packages/packages-for-research-papers.tex
\usepackage{amsmath, amssymb, amsthm}
\usepackage{mathtools}
\usepackage{enumerate}
\usepackage{enumitem}
\usepackage{comment}
\usepackage[T1]{fontenc}
\usepackage[all]{xy}
\usepackage{pdfsync}
\usepackage[colorlinks,linkcolor=blue,citecolor=blue,urlcolor=cyan]{hyperref}
\usepackage{tikz-cd}
\usepackage{tikz}
\usetikzlibrary{babel}
\usepackage{verbatim}
\usepackage[utf8]{inputenc} 
\usepackage{bbm}
\usepackage{graphicx}
\usepackage{enumerate,comment,braket,xspace,csquotes}
\usepackage{bbm}
\usepackage{mathrsfs}
\usepackage{amssymb,latexsym,amscd,verbatim,color}
\usepackage{amsmath}
\usepackage[capitalize]{cleveref}
\usepackage[T1]{fontenc} 
\usepackage{enumerate,comment,braket,xspace,csquotes}
\usepackage{geometry}

\newtheorem{thm}{Theorem}
\newtheorem{prop}{Proposition}[section]

\newtheorem{lemma}{Lemma}[section]
\newtheorem{cor}{Corollary}[section]

\theoremstyle{definition}
\newtheorem{defn}{Definition}[section]

\newtheorem{remark}{Remark}[section]

\numberwithin{equation}{section}

\newcommand{\cM}{\mathcal{M}}
\newcommand{\ve}{^{\vee}} 

\newcommand{\sG}{\mathscr{G}}
\newcommand{\Mi}{\cM_1}

\newcommand{\Lie}{{\operatorname{Lie}}}

\newcommand{\into}{\hookrightarrow}

\newcommand{\Gal}{\operatorname{Gal}}

\newcommand{\Z}{\mathbb{Z}}
\newcommand{\F}{\mathbb{F}}
\newcommand{\Q}{\mathbb{Q}}
\renewcommand{\lim}{\varprojlim}

\newcommand{\Hom}{\operatorname{Hom}}
\newcommand{\Ext}{\operatorname{Ext}}
\newcommand{\D}{\operatorname{\mathbb{D}}}
\newcommand{\W}{\operatorname{W}}
\newcommand{\gr}{\operatorname{gr}}
\newcommand{\coker}{\operatorname{Coker}}
\newcommand{\colim}{\varinjlim}
\newcommand{\Tcrys}{\operatorname{T_{crys}}}
\newcommand{\Spec}{\operatorname{Spec}}
\newcommand{\Dk}{\mathcal{FD}_k}
\newcommand{\Fil}{\operatorname{Fil}}
\newcommand{\iFD}{\operatorname{\mathbbm{1}}}
\newcommand{\ie}{{\it i.e., }}
\newcommand{\Gm}{{\operatorname{\mathbb{G}_m}}}
\newcommand{\Tcrysve}{\operatorname{T\ve_{crys}}}
\newcommand{\rk}{\operatorname{rk}}

\newcommand{\cris}{\operatorname{cris}}

\newcommand{\End}{\operatorname{End}}
\newcommand{\id}{\operatorname{id}}
\newcommand{\Frac}{\operatorname{Frac}}
\newcommand{\crys}{\operatorname{crys}}

\newcommand{\Pic}{\operatorname{Pic}}
\newcommand{\CRIS}{\operatorname{CRIS}}

\newcommand{\DM}{\mathbf{DM}}                    

\newcommand{\eff}{\mathrm{eff}}
\newcommand{\et}{\mathrm{\acute et}}

\newcommand{\mot}{\mathrm{mot}}                  

\newcommand{\Tot}{\operatorname{Tot}}            
\newcommand{\BT}{\operatorname{BT}}              

\newcommand{\thick}{\operatorname{thick}}

\newcommand{\MF}{\Phi\text{-}\Vect_{K_0}^{\Fil}}

\newcommand{\ord}{\operatorname{ord}}
\newcommand{\fppf}{\operatorname{fppf}}
\newcommand{\Isoc}{\operatorname{F\text{-}\mathrm{Isoc}}}
\newcommand{\Vect}{\operatorname{Vect}}

\newcommand{\res}{\operatorname{res}}
\newcommand{\sep}{\operatorname{sep}}

\newcommand{\Bg}{\operatorname{\mathcal{BT}}}

%% file: main.bbl
\begin{thebibliography}{BVK16}

\bibitem[AB11]{AndreattaBertapelle2011}
Fabrizio Andreatta and Alessandra Bertapelle.
\newblock Universal extension crystals of {$1$}-motives and applications.
\newblock {\em Journal of Pure and Applied Algebra}, 215(8):1919--1944, 2011.

\bibitem[ABV05]{AndreattaBarbieriViale2005}
Fabrizio Andreatta and Luca Barbieri-Viale.
\newblock Crystalline realizations of {$1$}-motives.
\newblock {\em Mathematische Annalen}, 331(1):111--172, 2005.

\bibitem[BM79]{BerthelotMessing1979}
Pierre Berthelot and William Messing.
\newblock {\em Th{\'e}orie de {D}ieudonn{\'e} cristalline. {I}}.
\newblock Number~63 in Ast{\'e}risque. Soci{\'e}t{\'e} Math{\'e}matique de France, Paris, 1979.
\newblock Available via Numdam.

\bibitem[BVK16]{BarbieriVialeKahn2016}
Luca Barbieri-Viale and Bruno Kahn.
\newblock On the derived category of 1-motives.
\newblock {\em Ast\'erisque}, 381:1--242, 2016.
\newblock Also available as arXiv:1009.1900.

\bibitem[BVS01]{BS01}
Luca Barbieri-Viale and Vasudevan Srinivas.
\newblock {\em Albanese and {Picard} $1$-motives}.
\newblock Number~87 in M\'emoires de la Soci\'et\'e Math\'ematique de France. Soci\'et\'e math\'ematique de France, 2001.

\bibitem[Del74]{Deligne74}
P.~Deligne.
\newblock Th\'eorie de hodge iii.
\newblock {\em Inst. Hautes \'Etudes Sci. Publ. Math.}, (44):5--77, 1974.

\bibitem[Dem72]{Demazure1972}
Michel Demazure.
\newblock {\em Lectures on {$p$}-Divisible Groups}, volume 302 of {\em Lecture Notes in Mathematics}.
\newblock Springer, Berlin--Heidelberg--New York, 1972.

\bibitem[DG70]{SGA3}
Michel Demazure and Pierre Gabriel.
\newblock {\em Sch{\'e}mas en groupes (SGA 3)}, volume 151--153 of {\em Lecture Notes in Mathematics}.
\newblock Springer, 1970.
\newblock S{\'e}minaire de G{\'e}om{\'e}trie Alg{\'e}brique du Bois-Marie 1962--64. Dirig{\'e} par M. Demazure et A. Grothendieck.

\bibitem[Fal83]{Faltings1983Endlichkeitssaetze}
Gerd Faltings.
\newblock Endlichkeitss\"atze f\"ur abelsche variet\"aten \"uber zahlk\"orpern.
\newblock {\em Inventiones mathematicae}, 73:349--366, 1983.

\bibitem[FL82]{FontaineLaffaille1982}
Jean-Marc Fontaine and Guy Laffaille.
\newblock Construction de repr{\'e}sentations $p$-adiques.
\newblock {\em Annales scientifiques de l'{\'E}cole normale sup{\'e}rieure}, 15(4):547--608, 1982.

\bibitem[Hon68]{Honda1968}
Taira Honda.
\newblock Isogeny classes of abelian varieties over finite fields.
\newblock {\em Journal of the Mathematical Society of Japan}, 20:83--95, 1968.

\bibitem[Mes72]{Messing}
W.~Messing.
\newblock {\em {The Crystals Associated to Barsotti-Tate Groups: With Applications to Abelian Schemes}}, volume 264 of {\em Lecture Notes in Mathematics}.
\newblock Springer, 1972.

\bibitem[Mil68]{Milne1968}
J.~S. Milne.
\newblock Extensions of abelian varieties defined over a finite field.
\newblock {\em Inventiones Mathematicae}, 5:63--84, 1968.

\bibitem[Mil17]{MilneAlgebraicGroups2017}
J.~S. Milne.
\newblock {\em Algebraic Groups: The Theory of Group Schemes of Finite Type over a Field}, volume 170 of {\em Cambridge Studies in Advanced Mathematics}.
\newblock Cambridge University Press, 2017.

\bibitem[Oor66]{oort2006commutative}
Frans Oort.
\newblock {\em Commutative group schemes}, volume~15.
\newblock Springer, 1966.

\bibitem[Oor09]{Oort2009}
Frans Oort.
\newblock {\em Foliations in Moduli Spaces of Abelian Varieties and Dimension of Leaves}, pages 465--501.
\newblock Birkh{\"a}user Boston, Boston, 2009.

\bibitem[Ser02]{SerreGC}
Jean-Pierre Serre.
\newblock {\em Galois Cohomology}.
\newblock Springer Monographs in Mathematics. Springer, 2002.
\newblock English translation of \emph{Cohomologie Galoisienne}.

\bibitem[Sil09]{SilvermanAEC2009}
Joseph~H. Silverman.
\newblock {\em The Arithmetic of Elliptic Curves}, volume 106 of {\em Graduate Texts in Mathematics}.
\newblock Springer, 2 edition, 2009.

\bibitem[Tat66]{Tate1966}
John Tate.
\newblock Endomorphisms of abelian varieties over finite fields.
\newblock {\em Inventiones mathematicae}, 2:134--144, 1966.

\bibitem[Wat69]{Waterhouse1969}
William~C. Waterhouse.
\newblock Abelian varieties over finite fields.
\newblock {\em Annales scientifiques de l'{\'E}cole Normale Sup{\'e}rieure}, 2(4):521--560, 1969.

\bibitem[Wei94]{WeibelHA}
Charles~A. Weibel.
\newblock {\em An Introduction to Homological Algebra}, volume~38 of {\em Cambridge Studies in Advanced Mathematics}.
\newblock Cambridge University Press, 1994.

\bibitem[Zar14]{Zarhin2014AbVarFiniteChar}
Yuri~G. Zarhin.
\newblock Abelian varieties over fields of finite characteristic.
\newblock {\em Open Mathematics}, 12(5):659--674, 2014.
\newblock arXiv:1301.5594.

\end{thebibliography}
